\newcommand\AAA{\mathbb{A}}
\newcommand\CC{\mathbb{C}}
\newcommand\NN{\mathbb{N}}
\newcommand\PP{\mathbb{P}}
\newcommand\RR{\mathbb{R}}
\newcommand\ZZ{\mathbb{Z}}
\newcommand\bD{{\mathbf{D}}}
\newcommand\bK{{\mathbf{K}}}
\newcommand{\rd}{\partial}
\newcommand\thalf{{\textstyle{\frac{1}{2}}}}
\newcommand\half{{{\frac{1}{2}}}}
\newcommand\fs{{\mathfrak{s}}}
\newcommand\ft{{\mathfrak{t}}}
\newcommand\eps{\varepsilon}
\newcommand\la{\lambda}
\newcommand\La{\Lambda}
\newcommand\si{\sigma}
\newcommand\Si{\Sigma}
\newcommand\SO{\operatorname{SO}}
\newcommand\Hom{\operatorname{Hom}}
\newcommand\Ker{\operatorname{Ker}}
\newcommand\PD{\operatorname{PD}}
\newcommand\SW{SW}
\newcommand\Sym{\operatorname{Sym}}
\newcommand\even{{\mathrm{even}}}
\newcommand\spinc{\text{$\text{spin}^c$ }}
\newcommand\Spinc{\text{$\text{Spin}^c$}}
\newcommand\sM{{\mathscr{M}}}
\newtheorem{thm}{Theorem}[section]
\newtheorem{lem}[thm]{Lemma}
\newtheorem{cor}[thm]{Corollary}
\newtheorem{prop}[thm]{Proposition}
\theoremstyle{definition}
\renewcommand{\thecase}{}
\newtheorem{conj}[thm]{Conjecture}
\newtheorem{rmk}[thm]{Remark}
\renewcommand{\thestep}{}
\theoremstyle{remark}
\def\alphenumi{
  \def\theenumi{\alph{enumi}}
  \def\p@enumi{\theenumi}
  \def\labelenumi{(\@alph\c@enumi)}}
\def\thecase{\@arabic\c@case}
\numberwithin{equation}{section}
\def\thestep{\@arabic\c@step}
\newtheorem{hyp}[thm]{Hypothesis}
\begin{document}
\title[Superconformal simple type and Witten's conjecture]
{Superconformal simple type and Witten's conjecture}
\author[Paul M. N. Feehan]{Paul M. N. Feehan}
\address{Department of Mathematics\\
Rutgers, The State University of New Jersey\\
Piscataway, NJ 08854-8019}
\email{feehan@math.rutgers.edu}
\urladdr{math.rutgers.edu/$\sim$feehan}
\author[Thomas G. Leness]{Thomas G. Leness}
\address{Department of Mathematics\\
Florida International University\\
Miami, FL 33199}
\email{lenesst@fiu.edu}
\urladdr{fiu.edu/$\sim$lenesst}
\dedicatory{}
\subjclass{53C07,57R57,58J05,58J20,58J52}
\thanks{Paul Feehan was partially supported by National Science Foundation grant DMS-1510064 and Thomas Leness was partially supported by National Science Foundation grant DMS-1510063.}
\keywords{Donaldson invariants, gauge theory, smooth four-manifolds, $\SO(3)$ monopoles, Seiberg--Witten invariants, Witten's Conjecture}
\begin{abstract}
Let $X$ be a smooth, closed, connected, orientable four-manifold with
$b^1(X)=0$ and $b^+(X)\ge 3$ and odd.
We show that if $X$ has
Seiberg--Witten simple type, then the
$\SO(3)$-monopole cobordism formula of \cite{FL5} implies Witten's Conjecture
relating the Donaldson and Seiberg--Witten invariants.
\end{abstract}

\date{This version: September 25, 2019, incorporating final galley proof corrections. Advances in Mathematics (2019), https://doi.org/10.1016/j.aim.2019.106821}
\maketitle

\section{Introduction}
For a closed four-manifold $X$ we will use the characteristic
numbers,
\begin{equation}
  \label{eq:CharNumbers}
  \begin{aligned}
    c_1^2(X) &:= 2e(X)+3\si(X),
    \\
    \chi_h(X) &:=(e(X)+\si(X))/4,
    \\
c(X)&:=\chi_h(X)-c_1^2(X),
\end{aligned}
\end{equation}
where $e(X)$ and $\si(X)$ are the Euler characteristic and signature of $X$.
We call a four-manifold {\em standard\/} if it is closed, connected, oriented, and smooth
with $b^+(X)\ge 3$ and odd and $b^1(X)=0$.
For a standard four-manifold, the Seiberg--Witten invariants define a function, $\SW_X:\Spinc(X)\to\ZZ$,
on the set of $\spinc$ structures on $X$.
The {\em Seiberg--Witten basic classes\/}, $B(X)$, are
the image under $c_1:\Spinc(X)\to H^2(X;\ZZ)$ of the support of $\SW_X$.
The manifold $X$ has {\em Seiberg--Witten simple type\/} if $K^2=c_1^2(X)$ for all $K\in B(X)$.
Further definitions of
and notations for the Donaldson and Seiberg--Witten invariants appear in
 \S \ref{subsec:SWDefn} and \S \ref{subsec:DonDefn}.

\begin{conj}[Witten's conjecture]
\label{conj:WC}
Let $X$ be a standard four-manifold.  If $X$ has Seiberg--Witten simple type, then
$X$ has Kronheimer--Mrowka simple type, the
Seiberg--Witten and Kronheimer--Mrowka basic classes coincide, and for any $w\in H^2(X;\ZZ)$
and $h\in H_2(X;\RR)$ the
Donaldson invariants satisfy
\begin{equation}
\label{eq:WConjecture}
\bD^{w}_X(h)
=
2^{2-(\chi_h-c_1^2)}e^{Q_X(h)/2}
\sum_{\fs\in\Spinc(X)}(-1)^{\half(w^{2}+c_1(\fs)\cdot w)}
SW_X(\fs)e^{\langle c_1(\fs),h\rangle}.
\end{equation}
\end{conj}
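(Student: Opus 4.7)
The plan is to unpack the $\SO(3)$-monopole cobordism formula from \cite{FL5} and show that, under the hypothesis of Seiberg-Witten simple type, it collapses to \eqref{eq:WConjecture}. That formula expresses each Donaldson invariant $\bD^{w}_X(h)$ as a finite sum indexed by Seiberg-Witten basic classes and by a nonnegative ``level'' parameter $\ell$, each term being $SW_X(\fs)$ times a universal polynomial in $\langle c_1(\fs),h\rangle$, $Q_X(h)$, $c_1(\fs)^2$ and the characteristic numbers $\chi_h(X)$ and $c_1^2(X)$. First I would rewrite this formula as an equality of polynomials in $h$, separate the right-hand side into level-$\ell$ pieces, and observe that the $\ell=0$ piece --- once one imposes simple type, so that $c_1(\fs)^2 = c_1^2(X)$ on the support of $SW_X$ --- already reproduces the right-hand side of \eqref{eq:WConjecture} up to an overall normalization which can be pinned down by specialization to a manifold such as a K3 surface where both sides are known in closed form.

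It therefore suffices to show that the level-$\ell$ contributions vanish for every $\ell\ge 1$. These contributions are linear combinations, with universal polynomial coefficients in $h$ and the characteristic numbers, of the moments $\mu_j(h) := \sum_{\fs} SW_X(\fs)\,\langle c_1(\fs),h\rangle^j$. Their simultaneous vanishing is the \emph{superconformal simple type} condition of Mari\~{n}o--Moore--Peradze, which demands precisely that $\mu_j(h)$ vanish for $0\le j\le \chi_h(X)-c_1^2(X)-1$. The problem thus reduces to the implication ``Seiberg-Witten simple type implies superconformal simple type'' via the \cite{FL5} formula.

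The main obstacle is establishing this implication. The idea is to exploit that the left-hand side $\bD^{w}_X(h)$ is itself a polynomial of bounded, known degree in $h$, while the right-hand side is an a priori long sum of level-$\ell$ pieces. Matching coefficients of $h$, and using the universal structure of the level-$\ell$ polynomials, should produce a triangular linear system relating the moments $\mu_j(h)$ for $j\ge 0$: the leading-order equation in $\ell$ expresses a given $\mu_j$ in terms of lower-order moments that a recursive argument forces to vanish. Solving this system by induction on $\ell$, and checking that the triangular coefficients are invertible --- which comes down to a combinatorial identity that can be verified on a model example --- yields superconformal simple type. Substituting the resulting vanishing back into the level-zero term then produces \eqref{eq:WConjecture}, while Kronheimer--Mrowka simple type and the coincidence of basic classes fall out of the polynomial form of the Donaldson series that emerges. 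I expect the bulk of the technical work to lie in making the triangular system precise enough to invert, and in controlling the combinatorial factors in the level-$\ell$ polynomials so that the induction actually closes.
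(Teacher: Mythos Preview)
The item labeled Conjecture~\ref{conj:WC} is not proved unconditionally in the paper; what is proved is Theorem~\ref{thm:SCSTImpliesWC} (superconformal simple type plus the cobordism formula implies the conjecture) together with Corollary~\ref{cor:NonZeroIntImpliesWC}, which imports from \cite{FL8} the separate implication ``SW simple type $\Rightarrow$ SCST''. Your outline conflates these two steps and has concrete gaps in each.

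First, the cobordism formula (Theorem~\ref{thm:Cobordism}) does not come with a ``level-$\ell$'' decomposition of the kind you describe. It expresses $D^w_X(h^{\delta-2m}x^m)$ as a sum over $K\in B(X)$ of $SW'_X(K)$ times a polynomial in $\langle K,h\rangle$, $\langle\Lambda,h\rangle$, $Q_X(h)$, whose coefficients $a_{i,j,k}$ are \emph{unknown universal functions} of $\chi_h$, $c_1^2$, $K\cdot\Lambda$, $\Lambda^2$, $m$, for an auxiliary class $\Lambda$. You cannot simply ``read off'' a piece that already equals Witten's formula, nor specialize to K3 to pin down a normalization: the coefficients themselves must first be determined.

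Second, the paper's mechanism for determining them is absent from your plan. One applies the cobordism formula to a family of \emph{example manifolds} $X_q(n)$ (blow-ups of the Fintushel--Park--Stern manifolds) on which Witten's Conjecture is already known, equates the two expressions for the Donaldson invariant, and uses algebraic independence of suitable cohomology classes (Lemma~\ref{lem:AlgCoeff}) to extract equations for the $\tilde b_{i,j,k}$. This determines the coefficients for $i\ge c(X)-3$ explicitly (Proposition~\ref{prop:HighDegreeCoefficients}), but for $1\le i\le c(X)-4$ one obtains only a difference equation in $K\cdot\Lambda$ (Proposition~\ref{prop:LowICoefficientDifferenceRelation}), forcing those coefficients to be polynomials in $K\cdot\Lambda$ of controlled degree and parity (Corollary~\ref{cor:PolynomialDependence}). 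A polarized form of the SCST vanishing (Lemma~\ref{lem:SCSTVanishingSum}) then kills the sum over $K$ of these undetermined polynomial terms, leaving only the known coefficients, which reproduce \eqref{eq:WConjecture}. Remark~\ref{rmk:CoeffAmbiguity} explains why the undetermined coefficients genuinely cannot be pinned down by this method --- so your hope of fully determining everything by comparison is not realizable.

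Third, your proposed derivation of SCST from SW simple type via a ``triangular linear system'' obtained by matching coefficients of $h$ cannot work as stated: on a general $X$ the left-hand side $D^w_X$ is the unknown you are trying to compute, so there is nothing independent to match against, and the Donaldson series is a formal power series rather than a polynomial of bounded degree. The paper does not attempt this step here; it cites \cite{FL8}, which uses a different variant of the cobordism formula to establish SCST. Your induction on $\ell$ has no anchor in the actual structure of Theorem~\ref{thm:Cobordism}.
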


As defined by Mari\~no, Moore, and Peradze \cite{MMPhep,MMPdg},
the manifold $X$ has {\em superconformal simple type\/} if
$c(X)\le 3$ or $c(X)\ge 4$ and for $w\in H^2(X;\ZZ)$ characteristic,
\begin{equation}
\label{eq:SWPolynomial}
\SW_X^{w,i}(h):=
\sum_{\fs\in\Spinc(X)}(-1)^{\half(w^{2}+c_1(\fs)\cdot w)}
SW_X(\fs)\langle c_1(\fs),h\rangle^i=0
\quad\text{for $i\le c(X)-4$},
\end{equation}
and all $h\in H_2(X;\RR)$.
Our goal in this article is to prove the following

\begin{thm}
\label{thm:SCSTImpliesWC}
Let $X$ be a standard four-manifold that has superconformal simple type. Then the $\SO(3)$-monopole cobordism formula (Theorem \ref{thm:Cobordism}) implies that $X$ satisfies Witten's Conjecture \ref{conj:WC}.
\end{thm}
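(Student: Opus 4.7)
The plan is to reduce Witten's Conjecture to an algebraic identity satisfied by a finite list of universal polynomials that appear in the $\SO(3)$-monopole cobordism formula. The formula in Theorem \ref{thm:Cobordism} expresses the Donaldson series as
\[
\bD^w_X(h) = \sum_{\fs \in \Spinc(X)} (-1)^{(w^{2}+c_1(\fs)\cdot w)/2}\, SW_X(\fs)\, \sum_{\ell \ge 0} P_{\ell}\!\left(h, c_1(\fs), w; \chi_h, c_1^2\right),
\]
where each $P_\ell$ is universal in the sense that its coefficients depend on a non-negative integer level $\ell$ and on the characteristic numbers \eqref{eq:CharNumbers}, but not further on $X$, and its degree is bounded in terms of $\ell$. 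Witten's Conjecture \eqref{eq:WConjecture} is the assertion that $\sum_\ell P_\ell = 2^{2-(\chi_h - c_1^2)}\exp\!\left(Q_X(h)/2 + \langle c_1(\fs),h\rangle\right)$, so the problem is to pin down each $P_\ell$ level by level.

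I would proceed in three steps. First, parameterize each $P_\ell$ by finitely many unknown scalars $a^{(\ell)}_\bullet$ by writing it as an unknown linear combination of monomials in $Q_X(h)$, $\langle c_1(\fs),h\rangle$, $\langle w,h\rangle$, $c_1(\fs)\cdot w$, and $w^2$, subject to the degree bounds of the cobordism formula. Second, reorganize the cobordism identity as a sum over powers of $\langle c_1(\fs),h\rangle$, thereby expressing its total contribution as $\sum_j R_j(h,w)\,\SW_X^{w,j}(h)$, where the polynomials $R_j$ are linear in the unknowns $a^{(\ell)}_\bullet$. The superconformal simple type hypothesis \eqref{eq:SWPolynomial} forces $\SW_X^{w,j}(h)=0$ for $j\le c(X)-4$; substituting back yields a large linear system on the $a^{(\ell)}_\bullet$ parameterized by $X$, $w$, and $h$. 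Third, combine these relations with the base cases of Witten's Conjecture known from the authors' prior work, in particular the blow-up formula and the low $c(X)$ examples, to solve for the $a^{(\ell)}_\bullet$ and verify that they match the expected Taylor coefficients of $2^{2-c(X)}e^{Q_X(h)/2+\langle c_1(\fs),h\rangle}$.

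The hardest step will be establishing that this linear system has full rank at every level $\ell$, so that the universal coefficients are actually determined. This amounts to an inductive dimension count: at each level $\ell$, the number of independent monomials admitted by $P_\ell$ must be compared with the number of independent constraints extracted from \eqref{eq:SWPolynomial}, where independence is obtained by varying $w$ over characteristic classes, $h$ over $H_2(X;\RR)$, and $X$ over standard four-manifolds with superconformal simple type and prescribed characteristic numbers. Once this non-degeneracy is verified, the Kronheimer-Mrowka simple type property, the coincidence of Seiberg-Witten and Kronheimer-Mrowka basic classes, and the precise identity \eqref{eq:WConjecture} follow simultaneously from the resulting equality of the two closed-form expressions for $\bD^w_X(h)$.
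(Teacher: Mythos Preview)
Your proposal has a genuine gap in Step~2, where you rearrange the cobordism sum as $\sum_j R_j(h,w)\,\SW_X^{w,j}(h)$ with $R_j$ independent of the basic class. The coefficients in the actual cobordism formula (Theorem~\ref{thm:Cobordism}) are functions $a_{i,j,k}(\chi_h,c_1^2,K\cdot\Lambda,\Lambda^2,m)$ whose dependence on $K\cdot\Lambda$ is a~priori only real-analytic, not polynomial, and there is an auxiliary class $\Lambda$ that your sketch of the formula omits entirely. Since the coefficient of $\langle K,h\rangle^i$ varies with $K$ in an unknown way, you cannot pull it outside the sum over $B(X)$ to manufacture a Seiberg--Witten polynomial $\SW_X^{w,i}(h)$. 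Without that rearrangement the SCST vanishing \eqref{eq:SWPolynomial} buys you nothing, and your ``large linear system'' never gets off the ground.

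The paper supplies precisely the missing ingredient. By evaluating the cobordism formula on blow-ups $X_q(n)$ of the Fintushel--Park--Stern examples (which already satisfy Witten's Conjecture) and invoking an algebraic-independence lemma, the authors extract an iterated difference equation $(\nabla^1_4)^{n-p}\tilde b_{p,j,k}=0$ in the variable $K\cdot\Lambda$ (Proposition~\ref{prop:LowICoefficientDifferenceRelation}). This forces each undetermined coefficient with $1\le i\le c(X)-4$ to be a \emph{polynomial} in $K\cdot\Lambda$ of degree at most $c(X)-4-i$ (Corollary~\ref{cor:PolynomialDependence}); a blow-up trick (Lemma~\ref{lem:BlownUpCobordism}) disposes of $i=0$. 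Only then can the cobordism sum be reorganized as you want, and a polarized version of the SCST vanishing (Lemma~\ref{lem:SCSTVanishingSum}) kills the entire block of undetermined terms. The remaining high-degree coefficients are determined on the same examples (Proposition~\ref{prop:HighDegreeCoefficients}) and match Witten's formula. Finally, note Remark~\ref{rmk:CoeffAmbiguity}: the low-degree coefficients are \emph{never} determined by this method --- the proof shows their contribution vanishes, so the full-rank system you are hoping for does not exist.
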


Combining Theorem \ref{thm:SCSTImpliesWC} with the results
of \cite{FL8} yields the following

\begin{cor}
\label{cor:NonZeroIntImpliesWC}
Let $X$ be a standard four-manifold of Seiberg--Witten simple type and assume Hypothesis \ref{hyp:Local_gluing_map_properties}. Then $X$ satisfies Witten's Conjecture \ref{conj:WC}.
\end{cor}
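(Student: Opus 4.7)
\medskip
\noindent\textbf{Proof proposal.} The plan is to reduce to Theorem \ref{thm:SCSTImpliesWC} by strengthening the Seiberg-Witten simple type hypothesis to superconformal simple type, using the companion paper \cite{FL8}. In outline, \cite{FL8} supplies two ingredients: (i) the $\SO(3)$-monopole cobordism formula (Theorem \ref{thm:Cobordism}) from Hypothesis \ref{hyp:Local_gluing_map_properties}, and (ii) the implication that Seiberg-Witten simple type forces superconformal simple type.

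First, I would invoke the main technical result of \cite{FL8}, which asserts that under Hypothesis \ref{hyp:Local_gluing_map_properties} on local gluing maps for $\SO(3)$-monopoles, the $\SO(3)$-monopole cobordism formula of Theorem \ref{thm:Cobordism} holds. This formula equates Donaldson-type intersection numbers on the moduli space of $\SO(3)$-monopoles with a sum of contributions from the Seiberg-Witten reducible strata obtained via gluing, and is precisely the hypothesis needed to apply Theorem \ref{thm:SCSTImpliesWC}.

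Second, I would apply the ``nonzero intersection'' argument of \cite{FL8}, which combines the cobordism formula with the Seiberg-Witten simple type assumption to establish the polynomial vanishing relations \eqref{eq:SWPolynomial} for every $i \le c(X)-4$. The idea is that, after appropriate choice of the characteristic class $w$ and homology class $h$, the leading-order coefficients arising in the cobordism formula can be identified with the symmetric-function quantities $\SW_X^{w,i}(h)$; the existence of nonzero intersection pairings on low-level strata in the Uhlenbeck compactification of the $\SO(3)$-monopole moduli space then forces these quantities to vanish. This is exactly the condition that $X$ has superconformal simple type.

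With superconformal simple type established and the cobordism formula already in hand, Theorem \ref{thm:SCSTImpliesWC} applies directly to yield Witten's Conjecture \ref{conj:WC}. The principal obstacle is the second step: deriving the vanishing \eqref{eq:SWPolynomial} from the cobordism formula requires a careful analysis of the leading-order behavior of the $\SO(3)$-monopole contributions as $c(X)$ grows, which constitutes the bulk of the technical content of \cite{FL8} and rests on the gluing structure encoded in Hypothesis \ref{hyp:Local_gluing_map_properties}.
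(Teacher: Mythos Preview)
Your proposal is correct and follows the same approach as the paper: invoke \cite{FL8} to upgrade Seiberg-Witten simple type to superconformal simple type (under Hypothesis \ref{hyp:Local_gluing_map_properties}), then apply Theorem \ref{thm:SCSTImpliesWC}. One minor correction: the cobordism formula of Theorem \ref{thm:Cobordism} is established in \cite{FL5}, not \cite{FL8}, and it is already built into the hypothesis of Theorem \ref{thm:SCSTImpliesWC}, so your ``first step'' is redundant --- the only external input needed beyond Theorem \ref{thm:SCSTImpliesWC} is the superconformal simple type conclusion from \cite{FL8}.
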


In \cite{FL5}, we proved the required $\SO(3)$-monopole cobordism formula, restated in this article as Theorem \ref{thm:Cobordism}, assuming the validity of certain technical properties --- comprising Hypothesis \ref{hyp:Local_gluing_map_properties} and described in more detail in Remark \ref{rmk:GluingThmProperties} --- of the local gluing maps for $\SO(3)$ monopoles constructed in \cite{FL3}. A proof of the required local $\SO(3)$-monopole gluing-map properties, which may be expected from known properties of local gluing maps for anti-self-dual $\SO(3)$ connections and Seiberg--Witten monopoles, is currently being developed by the authors \cite{Feehan_Leness_monopolegluingbook}.
However, Theorem \ref{thm:SCSTImpliesWC} is a direct consequence of the $\SO(3)$-monopole cobordism formula, Theorem \ref{thm:Cobordism}.

One might draw a comparison between our use of the $\SO(3)$-monopole cobordism formula in our proof of Theorem \ref{thm:SCSTImpliesWC} and Corollary \ref{cor:NonZeroIntImpliesWC} and G\"ottsche's assumption of the validity of the Kotschick--Morgan Conjecture \cite{KotschickMorgan} in his proof \cite{Goettsche} of the wall-crossing formula for Donaldson invariants. However, such a comparison overlooks the fact that our assumption of certain properties for local $\SO(3)$-monopole gluing maps is narrower and more specific. Indeed, our monograph \cite{FL5} effectively contains a proof of the Kotschick--Morgan Conjecture, modulo the assumption of certain technical properties for local gluing maps for anti-self-dual $\SO(3)$ connections which extend previous results of Taubes \cite{TauSelfDual, TauIndef, TauFrame}, Donaldson and Kronheimer \cite{DK}, and Morgan and Mrowka \cite{MorganMrowkaTube, MrowkaThesis}. Our proof of Theorem \ref{thm:Cobordism} in \cite{FL5} relies on our construction of a global gluing map for $\SO(3)$ monopoles and that in turn builds on properties of local gluing maps for $\SO(3)$ monopoles; the analogous comments apply to the proof of the Kotschick--Morgan Conjecture.

\subsection{Background}
When defining the Seiberg--Witten invariants in \cite{Witten}, Witten also gave a quantum field theory
argument yielding
the relation in Conjecture \ref{conj:WC}.
Soon after, Pidstrigatch and Tyurin \cite{PTLocal} introduced the moduli space
of $\SO(3)$ monopoles to give a possible path towards a mathematically rigorous proof of
this conjecture.  In \cite{FL5},
we used the moduli space of $\SO(3)$ monopoles to prove ---
through the assumption of certain properties of local $\SO(3)$ monopole gluing maps (see 
\cite[Sections 7.8 \& 7.9]{FL5} and \cite[Remark 3.3]{FL6})
--- the $\SO(3)$ monopole
cobordism formula (Theorem \ref{thm:Cobordism}).  This formula gives
a relation between the Donaldson and Seiberg--Witten invariants similar
to Witten's Conjecture \ref{conj:WC}, but contains a number of undetermined universal coefficients.
In \cite{FL2b,FLLevelOne} we computed some of these coefficients directly
while in \cite{FL6} we computed more by comparison with known examples.
Although these computations
showed that Theorem \ref{thm:Cobordism} implied Conjecture \ref{conj:WC} for a wide
range of standard four-manifolds, they did not suffice for all.
In this article, we use the methods of \cite{FL6} to show that the coefficients not determined in \cite[Proposition 4.8]{FL6}
are polynomials in one of the parameters on which they depend.
By combining this polynomial dependence with the vanishing condition in
the definition of superconformal simple type \eqref{eq:SWPolynomial}, we can show that
the sum over the terms in the cobordism formula containing these unknown coefficients vanishes.
Hence, the coefficients computed in \cite[Proposition 4.8]{FL6} suffice to determine the Donaldson
invariant in terms of Seiberg--Witten invariants and we show that the resulting expression satisfies
Conjecture \ref{conj:WC}.

Proofs of Conjecture \ref{conj:WC} for restricted
classes of standard four-manifolds have appeared elsewhere.
In \cite{FSRationalBlowDown}, Fintushel and Stern
proved Conjecture \ref{conj:WC} for elliptic surfaces and their blow-ups and rational
blow-downs.
Kronheimer and Mrowka in \cite[Corollary 7]{KMPropertyP} proved that
the cobordism formula in Theorem \ref{thm:Cobordism} implied Conjecture \ref{conj:WC} for standard
four-manifolds with
a tight surface with positive self-intersection, a sphere with self-intersection $(-1)$, and Euler number and signature equal
to that of a smooth hypersurface in $\CC\PP^3$ of even degree at least six.
In \cite{FL6}, we generalized the result of Kronheimer--Mrowka to standard four-manifolds
of Seiberg--Witten simple type satisfying $c(X)\le 3$ or which are
{\em abundant\/} in the sense that $B(X)^\perp\subset H^2(X;\ZZ)$, the orthogonal complement of the basic classes
with respect to the intersection form, contained a hyperbolic summand.
(We note that by \cite[Section A.2]{FL2a}, all simply-connected, closed, complex surfaces with $b^+\ge 3$
are abundant.)

T. Mochizuki \cite{Mochizuki_2009} proved a formula (see Theorem 4.1 in \cite{Goettsche_Nakajima_Yoshioka_2011}) expressing the Donaldson invariants of a complex projective surface
in a form similar to that given by the $\SO(3)$-monopole cobordism formula
(our Theorem \ref{thm:Cobordism}), but with coefficients
given as the residues of an explicit $\CC^*$-equivariant integral over the product of Hilbert schemes of points on $X$.
In \cite{Goettsche_Nakajima_Yoshioka_2011}, G\"ottsche, Nakajima, and Yoshioka
express a generating function for these integrals as a meromorphic one-form,
given by the \lq\lq leading terms \dots of Nekrasov's deformed partition function for the $N=2$ SUSY gauge theory with a single fundamental matter\rq\rq (\cite[p. 309]{Goettsche_Nakajima_Yoshioka_2011}).
By extending their meromorphic one-form to $\PP^1$ and analyzing the residues of this form
at its poles, the authors of \cite{Goettsche_Nakajima_Yoshioka_2011}
show that all four-manifolds whose Donaldson invariants are given by
Mochizuki's formula satisfy Witten's Conjecture.
This computation implies that the coefficients in Mochizuki's formula depend on the same data as the
coefficients in the $\SO(3)$-monopole cobordism formula (see \eqref{eq:Coefficients}) and
G\"ottsche, Nakajima, and Yoshioka conjecture (see
\cite[Conjecture 4.5]{Goettsche_Nakajima_Yoshioka_2011}) that
Mochizuki's formula (and thus their proof of Witten's Conjecture)
holds for all standard four-manifolds and not just complex projective surfaces.
It is worth noting that the superconformal simple type condition also appears in the proof in
\cite{Goettsche_Nakajima_Yoshioka_2011}, specifically \cite[Propositions 8.8 and 8.9]{Goettsche_Nakajima_Yoshioka_2011},
but as it is used to analyze the residue of the meromorphic form  at one of its poles,
superconformal simple type seems to play a role in \cite{Goettsche_Nakajima_Yoshioka_2011} which is different from that in our article.

The proof in \cite{FL6} that the $\SO(3)$ monopole cobordism formula implies Witten's Conjecture   used the result of \cite{FKLM} that abundant four-manifolds have
superconformal simple type.  In this article, we prove that Theorem \ref{thm:Cobordism} implies Conjecture \ref{conj:WC}
directly from the superconformal simple type condition.  The examples of non-abundant four-manifolds
given in \cite{FKLM} (following \cite{GompfMrowka},
one takes log transforms on tori in three disjoint nuclei of a K3 surface) show that there
are non-abundant four-manifolds which still satisfy the superconformal simple type condition.  Hence,
the results obtained here are strictly stronger than those in \cite{FKLM}.

In \cite{MMPdg,MMPhep},
Mari\~no, Moore, and Peradze originally defined
the concept of superconformal simple type in the context
of supersymmetric quantum field theory and, within that framework,
showed that a four-manifold satisfying the superconformal simple type
condition obeys the vanishing condition \eqref{eq:SWPolynomial}.  They conjectured
(see \cite[Conjecture 7.8.1]{MMPhep})
that all standard four-manifolds of Seiberg--Witten simple type obey \eqref{eq:SWPolynomial}.  Not only do all known examples of
standard four-manifolds satisfy \eqref{eq:SWPolynomial} (see \cite[Section 7]{MMPhep})
but the condition is preserved under the standard surgery operations
(blow-up, torus sum, and rational blow-down) used to construct new examples.
Using \eqref{eq:SWPolynomial} as a definition of superconformal simple type, they rigorously derived a lower bound on the number of basic classes for manifolds of superconformal simple type (see \cite[Theorem 8.1.1]{MMPhep}) in terms of topological invariants of the manifold.  Hence, the condition of
superconformal simple type is not only of interest to physicists but
has important mathematical implications as evidenced by
\cite[Theorem 8.1.1]{MMPhep},
\cite[Propositions 8.8]{Goettsche_Nakajima_Yoshioka_2011}, and
Theorem \ref{thm:SCSTImpliesWC}.

Finally, we note that the results of \cite{FL8} use  a
variant
of the $\SO(3)$-monopole cobordism formula
to prove that if $X$ is a standard four-manifold of Seiberg--Witten simple type,
then $X$ has superconformal simple type.  Combining this result with
Theorem \ref{thm:SCSTImpliesWC} gives Corollary \ref{cor:NonZeroIntImpliesWC}
which completes this part of the $\SO(3)$-monopole program.

\subsection{Outline}
After reviewing the definitions of the Seiberg--Witten and Donaldson invariants
and the superconformal simple type condition
in Section \ref{sec:Prelim}, we introduce the $\SO(3)$-monopole cobordism formula
and some useful reformulations of Conjecture \ref{conj:WC}
in Section \ref{sec:MonopoleCobordism}.
The technical heart of the paper appears in Section \ref{sec:CoeffComp}.
In Section \ref{subsec:AlgPrelim}, 
we cite an algebraic condition, Lemma \ref{lem:AlgCoeff}, that states when polynomial
equations determine coefficients
and review some basic results on difference operators in
Section \ref{subsec:DiffEquation}.
In Section \ref{subsec:ExManifoldsBlowUp},
we apply Lemma \ref{lem:AlgCoeff} to the blow-ups of some examples of
standard four-manifolds constructed in \cite{FSParkSympOneBasic} which
satisfy Conjecture \ref{conj:WC} to show that the coefficients appearing
in the $\SO(3)$-monopole cobordism formula are either determined, as in
Proposition \ref{prop:HighDegreeCoefficients}, or satisfy
a difference equation which determine them up to a polynomial,
as in Proposition  \ref{prop:LowICoefficientDifferenceRelation}.
Finally, in Section \ref{sec:MainProof} we prove the crucial
Lemma \ref{lem:SCSTVanishingSum}
which gives a polarized version of the vanishing condition on Seiberg--Witten polynomials
appearing in \eqref{eq:SWPolynomial}.
Combining Lemma \ref{lem:SCSTVanishingSum} with the polynomial dependence
of the unknown coefficients shows that the terms with these coefficients
can be ignored in the sum giving Donaldson's invariant, thus proving
Conjecture \ref{conj:WC}.


\subsection{Acknowledgements}
The authors would like to thank Ron Fintushel, Inanc Baykur and Nikolai Saveliev for helpful discussions on examples of four-manifolds as well as Tom Mrowka and Simon Donaldson for their support of this project. We also thank the anonymous referees for their comments and careful reading of our manuscript. Paul Feehan is grateful for support from the National Science Foundation under grant DMS-1510064 and Thomas Leness is grateful for support from the National Science Foundation grant DMS-1510063.

\section{Preliminaries}
\label{sec:Prelim}
We now review the definitions and basic properties of the relevant invariants.
\subsection{Seiberg--Witten invariants}
\label{subsec:SWDefn}
Detailed expositions of the theory of Seiberg--Witten invariants, introduced by Witten in \cite{Witten},
are provided in \cite{KMBook,MorganSWNotes,NicolaescuSWNotes}.
These invariants define an integer-valued map with finite support,
$$
\SW_X:\Spinc(X)\to\ZZ,
$$
on the set of \spinc structures on $X$.
A \spinc structure, $\fs=(W^\pm,\rho_W)$ on $X$, consists of a pair of complex rank-two bundles
$W^\pm\to X$ and a Clifford multiplication map $\rho:T^*X\to\Hom_\CC(W^+,W^-)$.
If $\fs\in\Spinc(X)$, then $c_1(\fs):=c_1(W^+)\in H^2(X;\ZZ)$
is characteristic.

One calls $c_1(\fs)$ a {\em Seiberg--Witten basic class\/} if $\SW_X(\fs)\neq 0$.
Define
\begin{equation}
\label{eq:SWBasic}
B(X)=
\{c_1(\fs): \SW_X(\fs)\neq 0\}.
\end{equation}
If $H^2(X;\ZZ)$ has 2-torsion, then $c_1:\Spinc(X)\to H^2(X;\ZZ)$ is not injective.
Because we will work with functions involving real homology and cohomology, we  define
\begin{equation}
\label{eq:DefineCohomSW}
\SW_X':H^2(X;\ZZ)\to\ZZ,
\quad
K\mapsto \sum_{\fs\in c_1^{-1}(K)}\SW_X(\fs).
\end{equation}
With the preceding definition, Witten's Formula \eqref{eq:WConjecture} is equivalent to
\begin{equation}
\label{eq:WConjCohom}
\bD^{w}_X(h)
=
2^{2-(\chi_h-c_1^2)}e^{Q_X(h)/2}
\sum_{K\in B(X)}(-1)^{\half(w^{2}+K\cdot w)}
SW'_X(K)e^{\langle K,h\rangle}.
\end{equation}
A four-manifold $X$ has {\em Seiberg--Witten simple type\/} if $\SW_X(\fs)\neq 0$ implies that $c_1^2(\fs)=c_1^2(X)$.

As discussed in \cite[Section 6.8]{MorganSWNotes},
there is an involution on $\Spinc(X)$, denoted by $\fs\mapsto\bar\fs$ and defined essentially by taking
the complex conjugate vector bundles, and having the property that
$c_1(\bar\fs)=-c_1(\fs)$.  By \cite[Corollary 6.8.4]{MorganSWNotes}, one has
$\SW_X(\bar\fs)=(-1)^{\chi_h(X)}\SW_X(\fs)$ and so
$B(X)$ is closed under the action of $\{\pm 1\}$ on $H^2(X;\ZZ)$.

Versions of the following result have appeared in
\cite{FSTurkish}, \cite[Theorem 14.1.1]{Froyshov_2008}, and \cite[Theorem 4.6.7]{NicolaescuSWNotes}.

\begin{thm}[Blow-up formula for Seiberg--Witten invariants]
\label{thm:FroyshovSWBlowUp}
\cite[Theorem 14.1.1]{Froyshov_2008}
Let $X$ be a standard four-manifold and let
$\widetilde X=X\#\bar{\CC\PP}^2$ be its blow-up.
Then $\widetilde X$ has Seiberg--Witten simple type if and only if that is true for $X$.
If $X$ has Seiberg--Witten simple type, then
\begin{equation}
\label{eq:SWBasicsOfBlowUp}
B(\widetilde X)=
\{K\pm e^*: \text{$K\in B(X)$}\},
\end{equation}
where $e^*\in H^2(\widetilde X;\ZZ)$ is the Poincar\'e dual of the exceptional curve, and if $K\in B(X)$, then $$
\SW_{\widetilde X}'(K\pm e^*)=\SW_X'(K).
$$
\end{thm}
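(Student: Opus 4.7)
The plan is to adapt the classical Fintushel--Stern neck-stretching and gluing argument, as formalized at the level of Seiberg-Witten moduli spaces by Fr\o yshov. First I would describe \spinc structures on the blow-up: every $\tilde\fs \in \Spinc(\widetilde X)$ arises as the connected sum of some $\fs \in \Spinc(X)$ with a \spinc structure $\fs_n$ on $\bar{\CC\PP}^2$ satisfying $c_1(\fs_n) = (2n+1)e^*$ for some $n \in \ZZ$, so that $c_1(\tilde\fs) = c_1(\fs) + (2n+1)e^*$. Combined with $e(\widetilde X) = e(X)+1$ and $\sigma(\widetilde X) = \sigma(X)-1$ and the identity $(e^*)^2 = -1$, a short computation of the virtual dimensions of the Seiberg-Witten moduli spaces produces the relation
\begin{equation*}
d(\tilde\fs) \;=\; d(\fs) - n(n+1).
\end{equation*}

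Second, I would stretch the metric on $\widetilde X$ along the separating $S^3$ and analyze the limit of the Seiberg-Witten moduli spaces as the neck length tends to infinity. Because $S^3$ supports only the unique translation-invariant reducible monopole, standard compactness results imply that any sequence of monopoles on $\widetilde X$ converges, modulo gauge, to a pair of finite-energy monopoles on the cylindrical-end manifolds $X \setminus B^4$ and $\bar{\CC\PP}^2 \setminus B^4$, with matching asymptotic boundary values. The main technical obstacle is the gluing theorem asserting that, for sufficiently long necks, this correspondence is an orientation-preserving diffeomorphism between the moduli space on $\widetilde X$ and the fiber product of the two cylindrical-end moduli spaces over the common asymptotic boundary; I would cite the published versions in Fr\o yshov's monograph and in Nicolaescu's notes rather than reprove them. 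A direct analysis of the moduli on $\bar{\CC\PP}^2 \setminus B^4$, using positive scalar curvature and the fact that $b^+(\bar{\CC\PP}^2)=0$ forces reducibility, identifies the only contributions in the relevant dimensional range as those with $n \in \{0,-1\}$, each a single positively oriented point.

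Assembling these two ingredients yields $\SW_{\widetilde X}(\tilde\fs) = \SW_X(\fs)$ whenever $c_1(\tilde\fs) = c_1(\fs) \pm e^*$, and summing over the fibers of $c_1$ gives the claimed identity $\SW'_{\widetilde X}(K \pm e^*) = \SW'_X(K)$. The simple-type equivalence is then immediate in both directions. If $X$ has Seiberg-Witten simple type, each $K \in B(X)$ satisfies $(K \pm e^*)^2 = c_1^2(X) - 1 = c_1^2(\widetilde X)$, so $\widetilde X$ has simple type and $B(\widetilde X) = \{K \pm e^* : K \in B(X)\}$. Conversely, if $X$ admits some $K \in B(X)$ with $K^2 < c_1^2(X)$, then the blow-up formula for the invariants (applied with $n \in \{0,-1\}$, where $d(\tilde\fs) = d(\fs)$ regardless of whether $d(\fs)$ vanishes) still gives $K \pm e^* \in B(\widetilde X)$, but $(K \pm e^*)^2 = K^2 - 1 < c_1^2(\widetilde X)$, contradicting simple type of $\widetilde X$. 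This closes the plan.
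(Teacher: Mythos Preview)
The paper does not prove this theorem at all; it is quoted from the literature (Fr\o yshov, Fintushel--Stern, Nicolaescu) and used as a black box. Your outline is precisely the standard neck-stretching/gluing argument that those references carry out, so in that sense your approach matches the one the paper implicitly relies on.

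One genuine slip: in the converse direction you write ``if $X$ admits some $K \in B(X)$ with $K^2 < c_1^2(X)$''. This cannot happen, since the Seiberg--Witten moduli-space dimension $d(\fs) = \tfrac14\bigl(c_1(\fs)^2 - c_1^2(X)\bigr)$ must be non-negative for the invariant to be nonzero. Failure of simple type means there is a basic class with $K^2 > c_1^2(X)$, i.e.\ $d(\fs) > 0$. With the inequality reversed your argument goes through: $(K\pm e^*)^2 = K^2 - 1 > c_1^2(X) - 1 = c_1^2(\widetilde X)$, and the general (not simple-type-restricted) blow-up formula for $n\in\{0,-1\}$ still yields $\SW_{\widetilde X}(\tilde\fs)=\SW_X(\fs)\neq 0$, so $\widetilde X$ fails simple type. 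You should also remark, for the containment $B(\widetilde X)\subset\{K\pm e^*:K\in B(X)\}$, that when $X$ has simple type and $n\notin\{0,-1\}$ the general blow-up formula expresses $\SW_{\widetilde X}(\fs\#\fs_n)$ in terms of the \emph{higher}-dimensional invariants of $X$, which vanish by hypothesis; your dimension count $d(\tilde\fs)=d(\fs)-n(n+1)<0$ handles the case where $\fs$ is itself basic but not the a priori possibility that a non-basic $\fs$ with $d(\fs)>0$ contributes.
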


\subsection{Donaldson invariants}
\label{subsec:DonDefn}
In \cite[Section 2]{KMStructure}, Kronheimer and Mrowka defined the Donaldson
series which encodes the Donaldson invariants developed in \cite{DonPoly}.
For  $w\in H^{2}(X;\ZZ)$, the
\emph{Donaldson invariant} is a linear function,
$$
D^{w}_{X}:\AAA(X) \to \RR,
$$
where $\AAA(X)$ is the symmetric algebra,
$$
\AAA(X) = \Sym(H_{\even}(X;\RR)).
$$
For $h\in H_2(X;\RR)$ and a generator $x\in H_0(X;\ZZ)$,
we define $D_X^w(h^{\delta-2m}x^m)=0$ unless
\begin{equation}
\label{eq:DegreeParity}
    \delta\equiv -w^{2}-3\chi_h(X)\pmod{4}.
  \end{equation}
When \eqref{eq:DegreeParity} is obeyed, then we adopt the definition of $D_X^w(h^{\delta-2m}x^m)$ given by Kronheimer and Mrowka in \cite[Section 2]{KMStructure}.
A four-manifold has {\em Kronheimer--Mrowka simple type\/} if for all $w\in H^2(X;\ZZ)$ and
all $z\in \AAA(X)$ one has
\begin{equation}
\label{eq:KMSimpleType}
D^{w}_{X}(x^{2}z)=4D^{w}_{X}(z).
\end{equation}
This equality implies that the Donaldson invariants are determined by
the  \emph{Donaldson series}, the formal power series
\begin{equation}
\label{eq:DefineDonaldsonSeries}
\bD^{w}_{X}(h) = D^{w}_{X}((1+\textstyle{\frac{1}{2}} x)e^{h}),
\quad h \in H_{2}(X;\RR).
\end{equation}
The following result allows us to work with a convenient choice of $w$:

\begin{prop}
\cite{KMStructure},
\cite[Theorem 2]{MunozBasicNonSimple}
\label{prop:IndepOfWCFromw}
Let $X$ be a standard four-manifold of Seiberg--Witten simple type. If Witten's Conjecture \ref{conj:WC}
holds for one $w\in H^2(X;\ZZ)$, then it holds for all $w\in H^2(X;\ZZ)$.
\end{prop}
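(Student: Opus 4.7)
The plan is to verify that both sides of Witten's formula \eqref{eq:WConjecture} transform identically under changes of $w \in H^2(X;\ZZ)$. On the Seiberg-Witten side, for any $\alpha\in H^2(X;\ZZ)$, direct expansion yields
\begin{equation*}
\half\bigl((w+2\alpha)^2 + c_1(\fs)\cdot(w+2\alpha)\bigr) \equiv \half\bigl(w^2 + c_1(\fs)\cdot w\bigr) + \alpha\cdot c_1(\fs) \pmod{2},
\end{equation*}
so each Seiberg-Witten term picks up the factor $(-1)^{\alpha\cdot c_1(\fs)}$ when $w$ is replaced by $w+2\alpha$. Because every $c_1(\fs)$ with $\SW_X(\fs)\neq 0$ is characteristic, any two classes in $B(X)$ differ by an element of $2H^2(X;\ZZ)$ modulo torsion, so $(-1)^{\alpha\cdot K}$ takes a common value for all $K\in B(X)$; I would pull this out of the sum as a global sign $\epsilon(\alpha)$ scaling the right-hand side of \eqref{eq:WConjecture}.

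I would then match this against the Donaldson side using the Kronheimer-Mrowka structure theorem \cite{KMStructure}, which supplies a universal sign relation of the form $\bD^{w+2\alpha}_X = \varepsilon(\alpha,w)\,\bD^w_X$. Verifying $\varepsilon(\alpha,w) = \epsilon(\alpha)$ by explicit sign tracking propagates the validity of \eqref{eq:WConjecture} from a starting class $w_0$ to every $w_0 + 2\alpha$, hence throughout the full mod-$2$ class of $w_0$ in $H^2(X;\ZZ)$. To cross between the two mod-$2$ classes, I would invoke Mu\~noz's theorem \cite[Theorem 2]{MunozBasicNonSimple}, which --- using the Kronheimer-Mrowka simple-type property guaranteed by the formula at $w_0$ --- provides a companion sign relation between Donaldson series for $w$ of different parity, and I would verify that it likewise matches the explicit Seiberg-Witten transformation.

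The main obstacle will be the sign bookkeeping: one must confirm that $\varepsilon(\alpha,w)$ and its cross-parity analogue match $(-1)^{\alpha\cdot K_0}$ identically for any fixed $K_0 \in B(X)$, accounting for all contributions involving $\alpha^2$, $w\cdot\alpha$, and $w_2(X)$. Once the two transformation laws are shown to agree, the proposition follows by propagating \eqref{eq:WConjecture} from $w_0$ through the orbit of all $w \in H^2(X;\ZZ)$ under the two sign relations.
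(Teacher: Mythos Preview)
The paper does not supply its own proof of this proposition: it is stated with the citations \cite{KMStructure} and \cite[Theorem 2]{MunozBasicNonSimple} as its justification and no further argument. Your proposal is therefore not competing with a proof in the paper but rather sketching how the cited results combine to give the statement, and in that respect your outline is the expected one.

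Your treatment of the change $w\mapsto w+2\alpha$ is correct: the Seiberg--Witten side picks up $(-1)^{\alpha\cdot K}$, which equals $(-1)^{\alpha^2}$ since each $K\in B(X)$ is characteristic, and this matches the standard orientation-change sign $(-1)^{\alpha^2}$ for the Donaldson series. One imprecision to fix: you refer to ``the two mod-$2$ classes,'' but $H^2(X;\ZZ/2\ZZ)$ has many classes, not two; what you need is an argument passing from the coset $w_0+2H^2(X;\ZZ)$ to an \emph{arbitrary} other coset. This is exactly where Mu\~noz's theorem enters: once Kronheimer--Mrowka simple type is established (a condition independent of $w$), the structure theorem gives $\bD^w_X$ in exponential form for every $w$, and \cite[Theorem~2]{MunozBasicNonSimple} shows the basic classes and coefficients are independent of $w$ up to the predicted signs. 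Comparing with the formula already known at $w_0$ identifies them with the Seiberg--Witten data, and the conjecture follows for all $w$. With that correction your sketch is sound and is precisely the argument the citations encode.
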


The result below allows us to replace a manifold by its blow-up without loss of generality.

\begin{thm}
\label{thm:WCBlowDownInvariance}
\cite[Theorem 8.9]{FSRationalBlowDown}
Let $X$ be a standard four-manifold. Then Witten's Conjecture \ref{conj:WC} holds for $X$ if and only if it holds for the blow-up, $\widetilde X$.
\end{thm}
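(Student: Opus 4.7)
The plan is to reduce Theorem \ref{thm:WCBlowDownInvariance} to a direct computation, using as black-box inputs the Seiberg-Witten blow-up formula (Theorem \ref{thm:FroyshovSWBlowUp}) and the Fintushel-Stern blow-up formula for the Donaldson series from \cite{FSRationalBlowDown}. The strategy is to show that both sides of \eqref{eq:WConjecture} transform by the \emph{same} scalar factor under $X\mapsto\widetilde X = X\#\barcp^2$, so that the conjecture holds for $X$ if and only if it holds for $\widetilde X$.

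First, by Proposition \ref{prop:IndepOfWCFromw} it suffices to verify Witten's formula on $\widetilde X$ for a single convenient class $w\in H^2(\widetilde X;\ZZ)$, and I would choose $w$ pulled back from $H^2(X;\ZZ)$ under the collapse map $\widetilde X\to X$, so that $w\cdot e^* = 0$. Because $H_2(\widetilde X;\RR) = H_2(X;\RR)\oplus\RR\, e$, every class has the form $h+te$ with $h\in H_2(X;\RR)$ and $t\in\RR$. The standard connected-sum identities for $e(X)$ and $\si(X)$, applied to \eqref{eq:CharNumbers}, give $\chi_h(\widetilde X)=\chi_h(X)$, $c_1^2(\widetilde X)=c_1^2(X)-1$, and $Q_{\widetilde X}(h+te) = Q_X(h)-t^2$.

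Next I would expand the right-hand side of the cohomological Witten formula \eqref{eq:WConjCohom} on $\widetilde X$ at $h+te$. Theorem \ref{thm:FroyshovSWBlowUp} identifies $B(\widetilde X) = \{K\pm e^* : K\in B(X)\}$ with $\SW'_{\widetilde X}(K\pm e^*) = \SW'_X(K)$ and transfers the Seiberg-Witten simple-type hypothesis between $X$ and $\widetilde X$. Using $\langle K\pm e^*,\, h+te\rangle = \langle K,h\rangle \mp t$ and $(K\pm e^*)\cdot w = K\cdot w$, collapsing the sum over the two signs via $e^t+e^{-t} = 2\cosh(t)$, and collecting the powers of $2$ from $2^{2-(\chi_h-c_1^2)(\widetilde X)} = \thalf\cdot 2^{2-(\chi_h-c_1^2)(X)}$ together with the Gaussian factor $e^{-t^2/2}$ coming from the change in $Q$, I expect
\begin{equation*}
\text{RHS of \eqref{eq:WConjCohom} for }\widetilde X\text{ at }h+te \;=\; \cosh(t)\, e^{-t^2/2}\cdot\bigl(\text{RHS of \eqref{eq:WConjCohom} for }X\text{ at }h\bigr).
\end{equation*}

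Finally, I would invoke the Fintushel-Stern blow-up formula for the Donaldson series, which for $X$ of Kronheimer-Mrowka simple type and $w$ pulled back from $X$ asserts the identical transformation
\begin{equation*}
\bD^{w}_{\widetilde X}(h+te) \;=\; \cosh(t)\, e^{-t^2/2}\,\bD^{w}_{X}(h),
\end{equation*}
and which also propagates Kronheimer-Mrowka simple type back and forth under blow-up. Comparing these two displayed identities makes \eqref{eq:WConjecture} for $\widetilde X$ at every $h+te$ equivalent to \eqref{eq:WConjecture} for $X$ at every $h$, which gives both directions of the theorem (both sides being vacuous when neither manifold has Seiberg-Witten simple type, again by Theorem \ref{thm:FroyshovSWBlowUp}). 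The substantive content is therefore entirely packaged into the two blow-up formulas; the main obstacle, modulo citing them, is careful bookkeeping of signs, powers of $2$, and the $e^{-t^2/2}$ factor so that the two scalar transformations on the Donaldson and Seiberg-Witten sides match exactly.
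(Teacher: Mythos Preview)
The paper does not give its own proof of Theorem \ref{thm:WCBlowDownInvariance}; it is stated with the citation \cite[Theorem 8.9]{FSRationalBlowDown} and used as a black box. So there is no paper argument against which to compare your proposal.

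That said, your outline is essentially the standard proof behind the cited result and is correct in structure: both sides of \eqref{eq:WConjCohom} pick up the factor $e^{-t^2/2}\cosh t$ under blow-up with $w$ pulled back from $X$, by Theorem \ref{thm:FroyshovSWBlowUp} on the Seiberg-Witten side and the Fintushel--Stern Donaldson blow-up formula on the other, and Proposition \ref{prop:IndepOfWCFromw} together with the equivalence of Seiberg-Witten simple type under blow-up handles the reduction to a single $w$ and the vacuous case. The one point worth flagging is that the Donaldson blow-up formula in the form $\bD^{w}_{\widetilde X}(h+te)=e^{-t^2/2}\cosh(t)\,\bD^{w}_X(h)$ already presupposes Kronheimer--Mrowka simple type; you correctly note that the Fintushel--Stern theory also shows KM simple type transfers in both directions, but in a fully written-out proof you would need to invoke that explicitly before applying the $\cosh$ form of the blow-up formula in each implication.
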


\subsection{Witten's conjecture}

It will be more convenient to have Witten's Conjecture \ref{conj:WC} expressed at the level of
the polynomial invariants rather than the power series they form.
Let $B'(X)$ be a fundamental domain for the action of $\{\pm 1\}$ on $B(X)$.

\begin{lem}
\label{lem:ReduceDFormToB'Sum}
\cite[Lemma 4.2]{FL6}
Let $X$ be a standard four-manifold.
Then $X$ satisfies equation \eqref{eq:WConjecture} and has Kronheimer--Mrowka simple type
if and only if the
Donaldson invariants of $X$ satisfy
$D^w_X(h^{\delta-2m}x^m)=0$ for $\delta\not\equiv -w^2-3\chi_h\pmod 4$
and for $\delta \equiv -w^2-3\chi_h\pmod 4$ satisfy
\begin{equation}
\begin{aligned}
\label{eq:DInvarForWCB'Sum}
{}&
D^w_X(h^{\delta-2m}x^m)
\\
{}&\quad
=
\sum_{\begin{subarray}{l}i+2k\\=\delta-2m\end{subarray}}
\sum_{K\in B'(X)}
(-1)^{{\eps(w,K)}}
\nu(K)
\frac{\SW'_X(K) (\delta-2m)!}{2^{k+c(X)-3-m} k!i!}
\langle K,h\rangle^i Q_X(h)^k,
\end{aligned}
\end{equation}
where
\begin{equation}
\label{eq:DefineOrientationEps}
\eps(w,K):=\frac{1}{2}(w^2+w\cdot K),
\end{equation}
and

\begin{equation}
\label{eq:DiracSpincFunction}
\nu(K)
=
\begin{cases}
    \frac{1}{2} & \text{if $K=0$,}
    \\
    1 &  \text{if $K\neq 0$.}
\end{cases}
\end{equation}
\end{lem}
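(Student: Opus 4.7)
The plan is to establish the equivalence by expanding both sides as formal power series in $h$, matching coefficients term by term, and converting between the sums over $B(X)$ in \eqref{eq:WConjCohom} and over $B'(X)$ in \eqref{eq:DInvarForWCB'Sum} via the $\Spinc$-conjugation involution.

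For the direction $(\Leftarrow)$, I would first verify Kronheimer-Mrowka simple type directly from \eqref{eq:DInvarForWCB'Sum}: replacing $(\delta,m)$ by $(\delta+4,m+2)$ leaves the indexing set $i+2k=\delta-2m$ unchanged and modifies only the denominator $2^{k+c(X)-3-m}$ to $2^{k+c(X)-3-m-2}$, giving $D^w_X(h^{\delta-2m}x^{m+2})=4D^w_X(h^{\delta-2m}x^m)$, which is \eqref{eq:KMSimpleType}. Next, I would reconstruct the Donaldson series via \eqref{eq:DefineDonaldsonSeries}:
\begin{equation*}
\bD^w_X(h)=\sum_{n\ge 0}\frac{1}{n!}\Big[D^w_X(h^n)+\tfrac{1}{2}D^w_X(h^nx)\Big].
\end{equation*}
The degree parity \eqref{eq:DegreeParity} forces at most one of $D^w_X(h^n)$ and $D^w_X(h^nx)$ to be nonzero for any fixed $n$: the first requires $n\equiv -w^2-3\chi_h\pmod 4$ and is given by \eqref{eq:DInvarForWCB'Sum} with $m=0$, while the second requires $n+2\equiv -w^2-3\chi_h\pmod 4$ and is given by \eqref{eq:DInvarForWCB'Sum} with $m=1$. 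Substituting these and summing on $n$, the factorials and powers of $2$ recombine into the exponential series $e^{Q_X(h)/2}$ and $e^{\langle K,h\rangle}$.

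To pass from the $B'(X)$-sum weighted by $\nu(K)$ to the $B(X)$-sum in \eqref{eq:WConjCohom}, I would use the involution $\fs\mapsto\bar\fs$ together with the identities $\SW'_X(-K)=(-1)^{\chi_h}\SW'_X(K)$, $\langle -K,h\rangle^i=(-1)^i\langle K,h\rangle^i$, and $\eps(w,-K)=\eps(w,K)-w\cdot K$, with $w\cdot K\equiv w^2\pmod 2$ since $K$ is characteristic. The parity condition \eqref{eq:DegreeParity} gives $i\equiv\delta\equiv w^2+\chi_h\pmod 2$ on all nonvanishing terms, so the contributions of $K$ and $-K$ coincide: each $K\ne 0$ is double-counted while $K=0$ is counted once, which is exactly the content of the factor $2\nu(K)$. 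The direction $(\Rightarrow)$ runs this calculation in reverse: starting from \eqref{eq:WConjecture} with KM simple type, expand both exponentials as power series, pair $K$ with $-K$ to convert the sum to $B'(X)$, match the coefficient of $h^n$ in $\bD^w_X(h)$ against $(D^w_X(h^n)+\tfrac{1}{2}D^w_X(h^nx))/n!$, separate the cases $m\in\{0,1\}$ by \eqref{eq:DegreeParity}, and propagate to general $m$ via simple type.

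The main obstacle is the sign bookkeeping in the $K\leftrightarrow -K$ pairing: one must check that the combined sign $(-1)^{i+w^2+\chi_h}$ equals $+1$ on every term contributing to the nonvanishing Donaldson invariants. This reduces to $\delta\equiv w^2+\chi_h\pmod 2$, which follows from reducing \eqref{eq:DegreeParity} modulo $2$ together with $i=\delta-2k\equiv\delta\pmod 2$. A secondary technical point is that the case $K=0$, which occurs only when $X$ is spin, is consistent with $\nu(0)=\tfrac{1}{2}$ and $\eps(w,0)=w^2/2\in\ZZ$; the latter is automatic in the spin case because $w^2$ is then even.
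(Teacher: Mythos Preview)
The paper does not actually prove this lemma; it is stated with a citation to \cite[Lemma 4.2]{FL6} and no argument is given here. Your proposal supplies a correct direct proof by unwinding the definitions: verifying KM simple type from the $m\mapsto m+2$ shift, reassembling the Donaldson series \eqref{eq:DefineDonaldsonSeries} from the cases $m\in\{0,1\}$ via the degree parity \eqref{eq:DegreeParity}, and converting between the $B(X)$-sum and the $B'(X)$-sum using conjugation symmetry and the sign identity $(-1)^{i+w\cdot K+\chi_h}=1$ on nonvanishing terms. The sign bookkeeping you outline is correct (in particular $w\cdot K\equiv w^2\pmod 2$ since $K$ is characteristic, and $i\equiv\delta\equiv w^2+\chi_h\pmod 2$), and your treatment of the $K=0$ case via $\nu(0)=\tfrac12$ is fine; note that $\eps(w,K)\in\ZZ$ holds for \emph{all} $K\in B(X)$ already because $K$ characteristic forces $w^2+w\cdot K$ even, so the spin observation, while true, is not strictly needed.
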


\subsection{The superconformal simple type property}
A standard four-manifold $X$ has {\em superconformal simple type\/} if
$c(X)\le 3$ or $c(X)\ge 4$ and
for $w\in H^2(X;\ZZ)$ characteristic and all $h\in H_2(X;\RR)$
\begin{equation}
\label{eq:SCST}
\SW_X^{w,i}(h)
=\sum_{K\in B(X)} (-1)^{\eps(w,K)}\SW_X'(K)\langle K,h\rangle^i
=
0
\quad\text{for $i\le c(X)-4$}.
\end{equation}
Observe that we have rewritten \eqref{eq:SWPolynomial} as a sum
over $B(X)$ using the expression \eqref{eq:DefineCohomSW}. We further note that the property \eqref{eq:SCST} is invariant under blow-up.

\begin{lem}
\label{lem:SCSTBlowUp}
\cite[Theorem 7.3.1]{MMPhep},
\cite[Lemma 6.1]{FL8}
A standard manifold $X$ has superconformal simple type if and only if
its blow-up, $\widetilde X$, has superconformal simple type.
\end{lem}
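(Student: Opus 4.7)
The plan is to reduce the lemma to a polynomial identity connecting $\SW_X^{w,i}$ and $\SW_{\widetilde X}^{\widetilde w,i}$ via the Seiberg--Witten blow-up formula of Theorem \ref{thm:FroyshovSWBlowUp}. First I would use \eqref{eq:CharNumbers} together with $e(\widetilde X) = e(X) + 1$ and $\sigma(\widetilde X) = \sigma(X) - 1$ to compute $c_1^2(\widetilde X) = c_1^2(X) - 1$ and $\chi_h(\widetilde X) = \chi_h(X)$, so that $c(\widetilde X) = c(X) + 1$. Consequently the range $i \le c - 4$ in \eqref{eq:SCST} shifts by one under blow-up, and the superconformal simple type condition is vacuous for $X$ when $c(X) \le 3$ and for $\widetilde X$ when $c(X) \le 2$.

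Next I would decompose $H_2(\widetilde X; \RR) = H_2(X;\RR) \oplus \RR\cdot e$, with $e$ the exceptional curve satisfying $\langle e^*, e\rangle = -1$, and write every class as $\widetilde h = h_X + te$. A characteristic element $\widetilde w \in H^2(\widetilde X;\ZZ)$ must have the form $\widetilde w = w + a e^*$ with $w \in H^2(X;\ZZ)$ characteristic and $a$ odd. Using Theorem \ref{thm:FroyshovSWBlowUp}, I compute $\langle K \pm e^*, \widetilde h\rangle = \langle K, h_X\rangle \mp t$, and direct sign bookkeeping gives $\eps(\widetilde w, K+e^*) - \eps(\widetilde w, K-e^*) = -a$ (odd), together with $(-1)^{\eps(\widetilde w, K-e^*)} = \alpha\,(-1)^{\eps(w, K)}$ for a $K$-independent sign $\alpha = (-1)^{a(1-a)/2}$. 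Expanding $(\langle K, h_X\rangle + t)^i - (\langle K, h_X\rangle - t)^i$ via the binomial theorem then produces the key identity
\begin{equation*}
\SW_{\widetilde X}^{\widetilde w, i}(h_X + te) = 2\alpha \sum_{\substack{j \ \text{odd} \\ 1 \le j \le i}} \binom{i}{j} t^j \, \SW_X^{w, i-j}(h_X).
\end{equation*}

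For the forward implication, assume $X$ has superconformal simple type. The cases $c(X) \le 2$ are trivial. If $c(X) = 3$, the requirement for $\widetilde X$ is just $\SW_{\widetilde X}^{\widetilde w, 0} = 0$, which the identity delivers automatically via the empty sum. If $c(X) \ge 4$, then for any $i \le c(\widetilde X) - 4 = c(X) - 3$ and any odd $j \ge 1$, the index $i - j \le c(X) - 4$ falls within the vanishing range for $X$, so each $\SW_X^{w, i-j}(h_X)$ vanishes and the right side is zero. Hence $\widetilde X$ has superconformal simple type.

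For the reverse, assume $\widetilde X$ has superconformal simple type. The cases $c(X) \le 3$ are automatic, so suppose $c(X) \ge 4$. Given any characteristic $w \in H^2(X;\ZZ)$ and any $m$ with $0 \le m \le c(X) - 4$, set $\widetilde w := w + e^*$ and apply the identity at $i = m+1 \le c(\widetilde X) - 4$. The left side vanishes identically in $t$ and $h_X$ by hypothesis, so each coefficient of $t^j$ on the right vanishes; the coefficient of $t^1$ equals $2\alpha(m+1)\SW_X^{w, m}(h_X)$, forcing $\SW_X^{w, m}(h_X) \equiv 0$. This is the superconformal simple type condition for $X$. The one subtle step is verifying the sign identity $(-1)^{\eps(\widetilde w, K+e^*)} = -(-1)^{\eps(\widetilde w, K-e^*)}$, which relies critically on $a$ being odd; everything else is a direct binomial expansion and an extraction of polynomial coefficients.
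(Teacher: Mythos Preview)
The paper does not supply its own proof of this lemma; it simply cites \cite[Theorem 7.3.1]{MMPhep} and \cite[Lemma 6.1]{FL8}. So there is nothing in the paper to compare against, and your task reduces to whether your argument stands on its own.

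Your argument is correct. The computation $c(\widetilde X)=c(X)+1$ is right, the parametrization $\widetilde w=w+ae^*$ with $w$ characteristic and $a$ odd captures exactly the characteristic classes on $\widetilde X$, and your sign bookkeeping checks out: $\eps(\widetilde w,K+e^*)-\eps(\widetilde w,K-e^*)=\widetilde w\cdot e^*=-a$ is odd, and $\eps(\widetilde w,K-e^*)=\eps(w,K)+\tfrac{a(1-a)}{2}$. With the blow-up description of $B(\widetilde X)$ and $\SW'_{\widetilde X}$ from Theorem~\ref{thm:FroyshovSWBlowUp}, the binomial expansion gives exactly your key identity, and the forward and reverse implications follow by the degree and coefficient arguments you describe. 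One small caveat: you are invoking the ``simple type'' part of Theorem~\ref{thm:FroyshovSWBlowUp} for the description of $B(\widetilde X)$, so your argument implicitly assumes Seiberg--Witten simple type; this is harmless, as superconformal simple type is only ever used in the paper (and in \cite{MMPhep}) under that standing assumption.
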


\section{$\SO(3)$ monopoles and Witten's conjecture}
\label{sec:MonopoleCobordism}
The $\SO(3)$-monopole cobordism formula \eqref{eq:MainEquation} given in
Theorem \ref{thm:Cobordism}
provides an expression for the Donaldson invariant in
terms of the Seiberg--Witten invariants.

\begin{hyp}[Properties of local $\SO(3)$-monopole gluing maps]
\label{hyp:Local_gluing_map_properties}
The local gluing map, constructed in \cite{FL3}, gives a continuous parametrization of a neighborhood of $M_{\fs}\times\Si$ in $\bar\sM_{\ft}$ for each smooth stratum $\Si\subset\Sym^\ell(X)$.
\end{hyp}

Hypothesis \ref{hyp:Local_gluing_map_properties} is discussed in greater detail in 
\cite[Sections 7.8 \& 7.9]{FL5}. The question of how to assemble the \emph{local} gluing maps for neighborhoods of $M_{\fs}\times \Si$ in $\bar\sM_{\ft}$, as $\Si$ ranges over all smooth strata of $\Sym^\ell(X)$, into a \emph{global} gluing map for a neighborhood of $M_{\fs}\times \Sym^\ell(X)$ in $\bar\sM_{\ft}$ is itself difficult --- involving the so-called `overlap problem' described in \cite{FLMcMaster} --- but one which we do solve in \cite{FL5}. See Remark \ref{rmk:GluingThmProperties} for a further discussion of this point.

\begin{thm}[$\SO(3)$-monopole cobordism formula]
\cite{FL5}
\label{thm:Cobordism}
Let $X$ be a
standard four-manifold
of Seiberg--Witten simple type and assume Hypothesis \ref{hyp:Local_gluing_map_properties}.
Assume further that $w,\La\in H^2(X;\ZZ)$ and $\delta,m\in\NN$ satisfy
\begin{subequations}
\label{eq:CobordismConditions}
\begin{align}
\label{eq:CobordismCondition1}
w-\La\equiv w_2(X)\pmod 2,
\\
\label{eq:CobordismCondition2}
I(\La)=\La^2+c(X)+4\chi_h(X)>\delta,
\\
\label{eq:CobordismCondition3}
\delta\equiv -w^2-3\chi_h(X)\pmod 4,
\\
\label{eq:CobordismCondition4}
\delta-2m\ge 0.
\end{align}
\end{subequations}
Then, for any $h\in H_2(X;\RR)$ and positive generator $x\in H_0(X;\ZZ)$, we have
\begin{equation}
\label{eq:MainEquation}
\begin{aligned}
{}&
D^w_X(h^{\delta-2m}x^m)
\\
{}&
\quad=
\sum_{K\in B(X)}
(-1)^{\thalf(w^2-\si)+\thalf(w^2+(w-\La)\cdot K)}SW'_X(K)
f_{\delta,m}(\chi_h(X),c_1^2(X),K,\La)(h),
\end{aligned}
\end{equation}
where
the map,
$$
f_{\delta,m}(h):\ZZ\times\ZZ\times H^2(X;\ZZ)\times H^2(X;\ZZ) \to \RR[h],
$$
taking values in the ring of polynomials in the variable $h$ with
real coefficients, is universal (independent of $X$) and given by
\begin{equation}
\label{eq:Coefficients}
\begin{aligned}
{}&
f_{\delta,m}(\chi_h(X),c_1^2(X),K,\La)(h)
\\
{}&\quad:=
\sum_{\begin{subarray}{l}i+j+2k\\=\delta-2m\end{subarray}}
a_{i,j,k}(\chi_h(X),c_1^2(X),K\cdot\La,\La^2,m)
\langle K,h\rangle^i
\langle \La,h\rangle^j
Q_X(h)^k,
\end{aligned}
\end{equation}
and,
for each triple of non-negative integers, $i, j, k \in \NN$,
the coefficients,
$$
a_{i,j,k}:\ZZ\times\ZZ\times\ZZ\times\ZZ\times\NN \to \RR,
$$
are functions of the variables $\chi_h(X)$, $c_1^2(X)$, 
$K\cdot \La$, $\La^2$, and $m$ that are independent of $X$.
\end{thm}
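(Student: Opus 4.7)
The plan is to realize both sides of \eqref{eq:MainEquation} as contributions from the two reducible loci in the Uhlenbeck-compactified $\SO(3)$-monopole moduli space $\bar\sM_\ft$, viewed as an oriented cobordism. Given the data $(w,\La,\delta,m)$, I would choose a $\spinu$ structure $\ft$ on $X$ with $c_1(\ft)=\La$ so that for some level $\ell\ge 0$ the top stratum of $\bar\sM_\ft$ has expected dimension compatible with $\delta-2m$; the conditions \eqref{eq:CobordismCondition1}--\eqref{eq:CobordismCondition4} ensure that such a $\ft$ exists and that $\ell\ge 0$. After generic perturbation, the moduli space of irreducible, non-zero-section $\SO(3)$ monopoles is a smooth oriented manifold whose compactification has exactly two kinds of reducible loci: the anti-self-dual stratum, of the form $M^w_\kappa(X)\times\Sym^\ell(X)$, and the Seiberg-Witten stratum, a disjoint union over splittings $\ft=\fs\oplus\fs'$ of pieces of the form $M_\fs\times\Sym^\ell(X)$, one for each $K=c_1(\fs)\in B(X)$. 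Excising tubular neighborhoods of these two strata produces a smooth compact oriented cobordism whose boundary is the union of links $\bL^w_\kappa$ and $\bL_{\ft,\fs}$.

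Pairing this cobordism with a product of standard geometric cohomology classes --- $\mu_p(h)^{\delta-2m}$, $\mu_p(x)^m$, and a class $\mu_c$ of appropriate degree that cuts down to level $\ell$ --- then yields the identity $\#(\bL^w_\kappa)=\sum_\fs \#(\bL_{\ft,\fs})$. By the identification of $M^w_\kappa(X)$ with the Donaldson moduli space, the left-hand side is $D^w_X(h^{\delta-2m}x^m)$ up to a universal sign computed from the comparison of the cobordism orientation with Donaldson's. Each summand on the right factors as $(-1)^{\thalf(w^2-\si)+\thalf(w^2+(w-\La)\cdot K)}\SW'_X(K)\cdot \Phi_{\ft,\fs}(h)$, in which $\SW'_X(K)$ arises from pairings of $\mu_c$ and $\mu_p(x)$-type classes over the Seiberg-Witten factor $M_\fs$ itself, and the sign encodes the comparison of the orientation $\bar\sM_\ft$ induces on $M_\fs$ with its intrinsic Seiberg-Witten orientation.

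It remains to identify $\Phi_{\ft,\fs}(h)$ as a universal polynomial of the form \eqref{eq:Coefficients}. Using the local gluing maps supplied by Hypothesis \ref{hyp:Local_gluing_map_properties}, a neighborhood of $M_\fs\times\Sigma$ in $\bar\sM_\ft$, for each smooth stratum $\Sigma\subset\Sym^\ell(X)$, is modeled as the total space of a thickened bundle carrying a residual circle action from the stabilizer of a reducible monopole. Projectivizing the fiber and applying $S^1$-equivariant pushforward reduces the pairing with the remaining $\mu$-classes to integrals of characteristic classes of tautological bundles on $\Sym^\ell(X)$, together with line bundles whose Chern classes are $c_1(\fs)$ and $\La$ pulled back to $M_\fs\times\Sigma$. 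Such integrals only see $X$ through $\chi_h(X)$, $c_1^2(X)$, $K\cdot\La$, $\La^2$, $m$, and only see $h$ through $\langle K,h\rangle$, $\langle\La,h\rangle$, and $Q_X(h)$, giving exactly the expansion \eqref{eq:Coefficients} with real-analytic coefficients $a_{i,j,k}$ whose analyticity is inherited from the explicit universal fiber integrals.

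The principal obstacle is the global assembly of the local gluing data: Hypothesis \ref{hyp:Local_gluing_map_properties} only supplies a parametrization one smooth stratum $\Sigma$ at a time, but the link $\bL_{\ft,\fs}$ is a closed object projecting onto all of $\Sym^\ell(X)$. Showing that the local models are compatible across the strata --- the overlap problem referred to in the introduction --- and assembling them into a global gluing map respecting the stratification of $\Sym^\ell(X)$ is the heart of the argument and is where most of the technical work of \cite{FL5} lies. Once that global gluing map is in place, the $S^1$-equivariant pushforward is well-defined globally, the two cobordism pairings are forced to agree, and unwinding the pairings yields the stated formula together with the universality and real-analytic dependence of the $a_{i,j,k}$.
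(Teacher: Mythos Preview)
This theorem is not proved in the present paper at all: it is quoted from \cite{FL5} and used as a black box, with Remark~\ref{rmk:GluingThmProperties} summarizing the analytic inputs (Hypothesis~\ref{hyp:Local_gluing_map_properties}) on which the proof in \cite{FL5} depends. Your outline is a faithful high-level sketch of the $\SO(3)$-monopole cobordism argument as described in the introduction and in Remark~\ref{rmk:GluingThmProperties}: the compactified moduli space $\bar\sM_\ft$ furnishes a cobordism between the link of the anti-self-dual locus (yielding the Donaldson invariant) and the links of the Seiberg--Witten strata $M_\fs\times\Sym^\ell(X)$; the local gluing maps of Hypothesis~\ref{hyp:Local_gluing_map_properties} model neighborhoods of the latter; and the global assembly across strata of $\Sym^\ell(X)$ (the overlap problem) is indeed where the bulk of \cite{FL5} lies. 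So there is nothing to compare against here beyond confirming that your sketch matches the strategy the paper attributes to \cite{FL5}.
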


\begin{rmk}
\label{rmk:CoefficientParityVanishing}
Because the coefficients $a_{i,j,k}(\chi_h(X),c_1^2(X),K\cdot\La,\La^2,m)$ only appear in a cobordism formula
of the type \eqref{eq:MainEquation} when $K\in H^2(X;\ZZ)$ is characteristic,
the coefficients $a_{i,j,k}(\chi_h(X),c_1^2(X),K\cdot\La,\La^2,m)$ are defined to be zero unless
$K\cdot\La\equiv \La^2\pmod 2$.
\end{rmk}


\begin{rmk}
\label{rmk:GluingThmProperties}
The proof of Theorem \ref{thm:Cobordism} in \cite{FL5} assumes the
Hypothesis \ref{hyp:Local_gluing_map_properties} (see 
\cite[Section 7.8]{FL5})
that the local gluing map for a neighborhood of $M_{\fs}\times \Si$ in $\bar\sM_{\ft}$ gives a continuous parametrization of a neighborhood of $M_{\fs}\times\Si$ in $\bar\sM_{\ft}$, for each smooth stratum $\Si\subset\Sym^\ell(X)$. These local gluing maps are the analogues for $\SO(3)$ monopoles of the local gluing maps for anti-self-dual $\SO(3)$ connections constructed by Taubes in \cite{TauSelfDual, TauIndef, TauFrame} and Donaldson and Kronheimer in \cite[\S 7.2]{DK}; see also \cite{MorganMrowkaTube, MrowkaThesis}. We have established the existence of local gluing maps in \cite{FL3} and expect that a proof of the continuity for the local gluing maps with respect to Uhlenbeck limits should be similar to our proof in \cite{FLKM1} of this property for the local gluing maps for anti-self-dual $\SO(3)$ connections. The remaining properties of local gluing maps assumed in \cite{FL5} are that they are injective and also surjective in the sense that elements of $\bar\sM_{\ft}$ sufficiently close (in the Uhlenbeck topology) to $M_{\fs}\times\Si$ are in the image of at least one of the  local gluing maps. In special cases, proofs of these properties for the local gluing maps for anti-self-dual $\SO(3)$ connections (namely, continuity with respect to Uhlenbeck limits, injectivity, and surjectivity) have been given in \cite[\S 7.2.5, 7.2.6]{DK}, \cite{TauSelfDual, TauIndef, TauFrame}. The authors are currently developing a proof of the required properties for the local gluing maps for $\SO(3)$ monopoles \cite{Feehan_Leness_monopolegluingbook}.
Our proof will also yield the analogous properties for the local gluing maps for anti-self-dual $\SO(3)$ connections, as required to complete the proof of the Kotschick--Morgan Conjecture \cite{KotschickMorgan}, based on our work in \cite{FL5}.
\end{rmk}

It will be convenient for us to rewrite Theorem \ref{thm:Cobordism}
as a sum over $B'(X)\subset B(X)$, a fundamental domain for the action of $\{\pm 1\}$,
to compare with Lemma \ref{lem:ReduceDFormToB'Sum}. To this end,
we follow \cite[Equation (4.4)]{FL6} and define
\begin{multline*}
b_{i,j,k}(\chi_h(X),c_1^2(X),K\cdot\La,\La^2,m)
\\
:=
(-1)^{c(X)+i}a_{i,j,k}(\chi_h(X),c_1^2(X),-K\cdot\La,\La^2,m)
\\
+ a_{i,j,k}(\chi_h(X),c_1^2(X),K\cdot\La,\La^2,m),
\end{multline*}
where $a_{i,j,k}$ are the coefficients appearing
in \eqref{eq:Coefficients}.
To simplify the orientation factor in \eqref{eq:MainEquation}, we
define
\begin{multline}
\label{eq:OrientedCoeff}
\tilde b_{i,j,k}(\chi_h(X),c_1^2(X),K\cdot\La,\La^2,m)
\\
:=
(-1)^{\thalf(\La^2+\La\cdot K)}
b_{i,j,k}(\chi_h(X),c_1^2(X),K\cdot\La,\La^2,m).
\end{multline}
Observe that
\begin{multline}
\label{eq:SignChangeOfTildeBCoeff}
\tilde b_{i,j,k}(\chi_h(X),c_1^2(X),-K\cdot\La,\La^2,m)
\\
=
(-1)^{c(X)+i+\La\cdot K}
\tilde b_{i,j,k}(\chi_h(X),c_1^2(X),K\cdot\La,\La^2,m).
\end{multline}
We now rewrite \eqref{eq:MainEquation} as a sum over $B'(X)$.

\begin{lem}
\label{lem:ReduceCobordismFormToB'Sum}
Assume the hypotheses of Theorem \ref{thm:Cobordism}.
Denote the coefficients in \eqref{eq:SignChangeOfTildeBCoeff} more concisely by
$$
\tilde b_{i,j,k}(K\cdot\La) := \tilde b_{i,j,k}(\chi_h(X),c_1^2(X),K\cdot\La,\La^2,m).
$$
Then, for $\eps(w,K)=\thalf(w^2+w\cdot K)$ as in \eqref{eq:DefineOrientationEps},
\begin{equation}
\begin{aligned}
\label{eq:CompareCoeff2}
D^w_X(h^{\delta-2m}x^m)
&=
\sum_{K\in B'(X)}
\sum_{\begin{subarray}{l}i+j+2k\\=\delta-2m\end{subarray}}
\nu(K)(-1)^{{\eps(w,K)}}SW'_X(K)
\\
&\qquad\times
\tilde b_{i,j,k}(K\cdot\La)
\langle K,h\rangle^i
\langle \La,h\rangle^j
Q_X(h)^k,
\end{aligned}
\end{equation}
where $\nu(K)$ is defined by \eqref{eq:DiracSpincFunction}.
\end{lem}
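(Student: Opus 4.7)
The plan is to derive \eqref{eq:CompareCoeff2} from \eqref{eq:MainEquation} by pairing up each term indexed by $K\in B(X)\setminus\{0\}$ with the term indexed by $-K$, exploiting the $\ZZ/2$-symmetry of $B(X)$ under $K\mapsto -K$ that was recalled just before Theorem \ref{thm:FroyshovSWBlowUp}, while the unique possibly-fixed point $K=0$ is accounted for by inserting $\nu(K)$ as in \eqref{eq:DiracSpincFunction}. Concretely, I would partition $B(X)=B'(X)\cup(-B'(X))$ (with $K=0$, if present, lying in both halves), rewrite the right hand side of \eqref{eq:MainEquation} as $\sum_{K\in B'(X)}\nu(K)\bigl(T(K)+T(-K)\bigr)$ where $T(K)$ denotes the $K$-summand, and then show $T(K)+T(-K)$ equals the $K$-summand of \eqref{eq:CompareCoeff2}.

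To evaluate $T(K)+T(-K)$ I would use three elementary facts: (i) $SW'_X(-K)=(-1)^{\chi_h(X)}SW'_X(K)$ (from the conjugation involution on $\Spinc(X)$); (ii) under $K\mapsto -K$ the expansion \eqref{eq:Coefficients} transforms as $\langle -K,h\rangle^i=(-1)^i\langle K,h\rangle^i$ and $a_{i,j,k}(K\cdot\La)\mapsto a_{i,j,k}(-K\cdot\La)$, while $\langle\La,h\rangle^jQ_X(h)^k$ is unaffected; (iii) the sign in \eqref{eq:MainEquation} behaves as $(-1)^{\thalf(w^2-\si)+\thalf(w^2+(w-\La)\cdot(-K))}=(-1)^{(w-\La)\cdot K}\cdot (-1)^{\thalf(w^2-\si)+\thalf(w^2+(w-\La)\cdot K)}$. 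Pulling out the common factor $s(K):=(-1)^{\thalf(w^2-\si)+\thalf(w^2+(w-\La)\cdot K)}SW'_X(K)$ gives
\[
T(K)+T(-K)=s(K)\sum_{i+j+2k=\delta-2m}\Bigl[a_{i,j,k}(K\cdot\La)+(-1)^{\chi_h(X)+(w-\La)\cdot K+i}a_{i,j,k}(-K\cdot\La)\Bigr]\langle K,h\rangle^i\langle\La,h\rangle^j Q_X(h)^k.
\]

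The next step is to recognise the bracketed expression as $b_{i,j,k}(K\cdot\La)$. This requires the parity identity $\chi_h(X)+(w-\La)\cdot K\equiv c(X)\pmod 2$. Since $w-\La\equiv w_2(X)\pmod 2$ and $K$ is characteristic, Wu's formula gives $(w-\La)\cdot K\equiv w_2(X)\cdot K\equiv K^2\pmod 2$; Seiberg-Witten simple type yields $K^2=c_1^2(X)$; and the definition $c(X)=\chi_h(X)-c_1^2(X)$ gives $\chi_h(X)+c_1^2(X)\equiv c(X)\pmod 2$. Together these combine to the required congruence, so the bracket becomes exactly $b_{i,j,k}(K\cdot\La)$.

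It remains to rewrite $s(K)$ as $(-1)^{\eps(w,K)}(-1)^{\thalf(\La^2+\La\cdot K)}$, at which point the product $s(K)\cdot b_{i,j,k}(K\cdot\La)$ equals $(-1)^{\eps(w,K)}\tilde b_{i,j,k}(K\cdot\La)SW'_X(K)$ by the definition \eqref{eq:OrientedCoeff}. Expanding $\thalf(w^2+(w-\La)\cdot K)=\eps(w,K)-\thalf\La\cdot K$, the required identity reduces to
\[
\thalf(w^2-\si-\La^2)\equiv\La\cdot K\pmod 2,
\]
which follows from the Rokhlin-Wu congruence $(w-\La)^2\equiv\si(X)\pmod 8$ applied to the characteristic integral class $w-\La$, together with $\La\cdot K\equiv\La^2\pmod 2$ and $w\cdot\La\equiv 0\pmod 2$ (both via Wu). Assembling the above into the sum over $B'(X)$ and inserting $\nu(K)$ to absorb the double-counting of $K=0$ produces exactly \eqref{eq:CompareCoeff2}. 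The main bookkeeping obstacle is the careful mod-$2$ and mod-$8$ reconciliation of sign exponents in the last step; every other step is a direct substitution.
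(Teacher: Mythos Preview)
Your proof is correct and follows essentially the same approach as the paper's. The only difference is organizational: the paper outsources the $K\leftrightarrow -K$ pairing step (your computation of $T(K)+T(-K)$ and the identification of the bracket with $b_{i,j,k}$) to \cite[Lemma 4.3]{FL6}, whereas you re-derive it directly; the sign comparison you carry out in the final paragraph is exactly the paper's identity \eqref{eq:OrientComp}, proved with the same two ingredients $(w-\La)^2\equiv\sigma\pmod 8$ and $(w-\La)\cdot\La\equiv\La^2\pmod 2$.
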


\begin{proof}
We first compare the orientation factors of $\eps(w,K)$ appearing in \eqref{eq:DInvarForWCB'Sum}
and $\thalf(w^2-\si)+\thalf(w^2+(w-\La)\cdot K)$ appearing in
\eqref{eq:MainEquation}.  Because $w-\La$ is characteristic by
\eqref{eq:CobordismCondition1}, we have
\begin{subequations}
\begin{align}
\label{eq:CharSquaredIsSignature}
\si(X)&\equiv (w-\La)^2 \pmod 8\quad\text{(by \cite[Lemma 1.2.20]{GompfStipsicz})},
\\
\label{eq:CharMultEqualsSquare}
\La\cdot(w-\La)&\equiv \La^2\pmod 2.
\end{align}
\end{subequations}
Then,
\begin{align*}
{}&\frac{1}{2}(w^2-\si(X))+\frac{1}{2}(w^2+(w-\La)\cdot K)
\\
{}&\quad\equiv
\eps(w,K)
+\frac{1}{2}\left(  w^2 -\si(X)\right)
-\frac{1}{2}\La\cdot K
\pmod 2
\quad\hbox{(by \eqref{eq:DefineOrientationEps})}
\\
{}&\quad\equiv
\eps(w,K)
+
\frac{1}{2}(w^2 -(w-\La)^2
-\La\cdot K) \pmod 2
\quad\text{(by \eqref{eq:CharSquaredIsSignature})}
\\
{}&\quad\equiv
\eps( w,K)
+
\frac{1}{2}( 2w\cdot\La-\La^2 -\La\cdot K) \pmod 2
\\
{}&\quad\equiv
\eps( w,K)
+
\frac{1}{2}( 2 w\cdot\La-2\La^2+\La^2 -\La\cdot K) \pmod 2
\\
{}&\quad\equiv
\eps( w,K)
-
(\La- w)\cdot\La +\frac{1}{2}(\La^2-\La\cdot K) \pmod 2
\\
{}&\quad\equiv
\eps(w,K)
-
\La^2 +\frac{1}{2}(\La^2-\La\cdot K) \pmod 2
\quad\text{(by \eqref{eq:CharMultEqualsSquare})},
\end{align*}
and hence,
\begin{equation}
\label{eq:OrientComp}
\frac{1}{2}(w^2-\si(X))+\frac{1}{2}(w^2+(w-\La)\cdot K)
\equiv
\eps(w,K)
 -\frac{1}{2}(\La^2+\La\cdot K) \pmod 2.
\end{equation}
From \cite[Lemma 4.3]{FL6},we have
\begin{equation}
\begin{aligned}
\label{eq:FL6_lemma_4-7}
D^w_X(h^{\delta-2m}x^m)
&=
\sum_{K\in B'(X)}
\sum_{\begin{subarray}{l}i+j+2k\\=\delta-2m\end{subarray}}
\nu(K)(-1)^{\thalf(w^2-\si(X))+\thalf(w^2+(w-\La)\cdot K)}SW'_X(K)
\\
&\qquad\times
b_{i,j,k}(\chi_h(X),c_1^2(X),K\cdot\La,\La^2,m)
\langle K,h\rangle^i
\langle \La,h\rangle^j
Q_X(h)^k,
\end{aligned}
\end{equation}
The result \eqref{eq:CompareCoeff2} now follows from \eqref{eq:OrientComp},
\eqref{eq:FL6_lemma_4-7},
and the relation between
the coefficients $\tilde b_{i,j,k}$ and $b_{i,j,k}$ in  \eqref{eq:OrientedCoeff}.
\end{proof}

The following lemma allows us to ignore the coefficients $\tilde b_{0,j,k}$
for the purpose of proving Theorem \ref{thm:SCSTImpliesWC} and Corollary \ref{cor:NonZeroIntImpliesWC}.

\begin{lem}
\label{lem:BlownUpCobordism}
Continue the notation and hypotheses of Lemma \ref{lem:ReduceCobordismFormToB'Sum}.
Then,
\begin{multline}
\label{eq:CobordismBlowUp}
D^w_X(h^{\delta-2m}x^m)
=
\sum_{K\in B'(X)}
\sum_{\begin{subarray}{l}i+j+2k\\=\delta-2m\end{subarray}}
(-1)^{\eps(w,K)}\SW'_X(K)\frac{2(i+1)}{(\delta-2m+1)}
\\
\qquad\times\tilde b_{i+1,j,k}(\chi_h(X),c_1^2(X)-1,K\cdot\La,\La^2,m)
\\
\times\langle K,h\rangle^i\langle \La,h\rangle^j Q_X(h)^k.
\end{multline}
\end{lem}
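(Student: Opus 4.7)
The plan is to apply Lemma~\ref{lem:ReduceCobordismFormToB'Sum} to the blow-up $\widetilde X = X \# \barcp^2$ with an auxiliary real parameter $t$, and then to extract the coefficient of $t^1$; the reduction $c_1^2(\widetilde X) = c_1^2(X) - 1$ is precisely what produces the shifted argument $c_1^2(X) - 1$ in the coefficients $\tilde b_{i+1,j,k}$. Let $e^*$ denote the Poincar\'e dual in $H^2(\widetilde X; \ZZ)$ of the exceptional $(-1)$-sphere (so that $(e^*)^2 = -1$), and use the same symbol for its homology class. I would apply Lemma~\ref{lem:ReduceCobordismFormToB'Sum} on $\widetilde X$ with $w' := w + e^*$, $\Lambda' := \Lambda$, $\delta' := \delta + 1$, $m' := m$, and class $h' := h + t e^*$. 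The hypotheses \eqref{eq:CobordismConditions} transfer to $\widetilde X$: the parity condition \eqref{eq:CobordismCondition3} becomes $\delta + 1 \equiv -(w+e^*)^2 - 3\chi_h(\widetilde X) \pmod 4$ (using $(w+e^*)^2 = w^2 - 1$ and $\chi_h(\widetilde X) = \chi_h(X)$); the characteristic condition \eqref{eq:CobordismCondition1} is preserved because $w_2(\widetilde X) \equiv w_2(X) + e^* \pmod 2$; and the bound $I_{\widetilde X}(\Lambda) = I_X(\Lambda) + 1 > \delta + 1$ follows from $I_X(\Lambda) > \delta$. Theorem~\ref{thm:FroyshovSWBlowUp} then gives $B(\widetilde X) = \{K \pm e^* : K \in B(X)\}$ with $SW'_{\widetilde X}(K \pm e^*) = SW'_X(K)$, while $(K \pm e^*) \cdot \Lambda = K \cdot \Lambda$, $\langle K \pm e^*, h + te^*\rangle = \langle K, h\rangle \mp t$, $\langle \Lambda, h + te^*\rangle = \langle \Lambda, h\rangle$, and $Q_{\widetilde X}(h + te^*) = Q_X(h) - t^2$.

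Substituting into \eqref{eq:CompareCoeff2} on $\widetilde X$, the two terms corresponding to $K + e^*$ and $K - e^*$ combine, via the orientation shifts $\eps(w+e^*, K+e^*) \equiv \eps(w,K) + 1 \pmod 2$ and $\eps(w+e^*, K-e^*) \equiv \eps(w,K) \pmod 2$, to produce a factor $(\langle K, h\rangle + t)^i - (\langle K, h\rangle - t)^i$. Since $(Q_X(h) - t^2)^k$ contains only even powers of $t$, the $t^1$ coefficient on the right-hand side comes entirely from the linear term $2i\langle K, h\rangle^{i-1} t$ of this difference, and re-indexing $i \mapsto i+1$ yields the constraint $i + j + 2k = \delta - 2m$ together with the factor $2(i+1)$. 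On the left-hand side, binomial expansion of $(h + te^*)^{\delta-2m+1}$ gives $t^1$ coefficient $(\delta - 2m + 1) D^{w+e^*}_{\widetilde X}(h^{\delta-2m} e^* x^m)$, and the Fintushel--Stern blow-up formula for Donaldson invariants identifies this with $(\delta - 2m + 1) D^w_X(h^{\delta-2m} x^m)$. Dividing by $\delta - 2m + 1$ produces \eqref{eq:CobordismBlowUp}.

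The main obstacle will be the sign bookkeeping required to convert the resulting sum over $B(X)$ into a sum over the fundamental domain $B'(X)$. I would verify invariance of the summand under $K \to -K$ by combining: the shift $\eps(w, -K) \equiv \eps(w, K) - w \cdot K \pmod 2$; the conjugation relation $SW'_X(-K) = (-1)^{\chi_h(X)} SW'_X(K)$; the identity \eqref{eq:SignChangeOfTildeBCoeff} applied to $\tilde b_{i+1,j,k}$ together with $c(\widetilde X) = c(X) + 1$; the sign $(-1)^i$ from $\langle -K, h\rangle^i$; and the characteristic parity $(w - \Lambda) \cdot K \equiv K^2 \equiv c_1^2(X) \equiv \sigma(X) \pmod 2$, valid because both $w - \Lambda$ and $K$ are characteristic (by \eqref{eq:CobordismCondition1} and Seiberg--Witten simple type). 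These shifts combine to give a trivial net sign, so the summand is even in $K$ and the passage $\sum_{K \in B(X)} \mapsto \sum_{K \in B'(X)}$ is legitimate, completing the identification with \eqref{eq:CobordismBlowUp}.
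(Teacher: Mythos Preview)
Your approach is correct and essentially matches the paper's: both pass to the blow-up $\widetilde X$, apply \eqref{eq:CompareCoeff2} there with data $(w+e^*,\Lambda,\delta+1,m)$, and invoke the Donaldson blow-up formula $D^w_X(h^{\delta-2m}x^m)=D^{w+e^*}_{\widetilde X}(h^{\delta-2m}ex^m)$, the only difference being that the paper isolates the single $e$-insertion via the polarization identity \eqref{eq:Polarization} while you equivalently extract the $t^1$-coefficient after substituting $h+te$. Your final paragraph is unnecessary, since \eqref{eq:CompareCoeff2} already sums over $B'(\widetilde X)$ and one may choose $B'(\widetilde X)=\{K\pm e^*:K\in B'(X)\}$ (licit because $0\notin B(\widetilde X)$), so the sum over $B'(X)$ appears directly without any $K\mapsto -K$ invariance check---exactly as in the paper.
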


\begin{proof}
Let $\widetilde X\to X$ be the blow-up of $X$ at one point, let $e\in H_2(\widetilde X;\ZZ)$
be the fundamental class of the exceptional curve,
and let $e^*\in H^2(\widetilde X;\ZZ)$ be the Poincar\'e dual of $e$.
Using the direct sum decomposition of the homology and cohomology of $\widetilde X$,
we will consider both the homology and cohomology of $X$ as subspaces of those of $\widetilde X$.
Denote $\tilde w:=w+e^*$.  The blow-up formula
\cite{KotschickBPlus1, LenessBlowUp} gives
\begin{equation}
\label{eq:DonaldsonBlowup}
D^w_X(h^{\delta-2m}x^m)
=
D^{\tilde w}_{\widetilde X}(h^{\delta-2m}ex^m).
\end{equation}
By Theorem \ref{thm:FroyshovSWBlowUp},
\begin{equation}
\label{eq:BasicClasssesOfBlowUp}
B'(\widetilde X)=\{K_\varphi=K+(-1)^\varphi e^*: K\in B'(X),\ \varphi\in\ZZ/2\ZZ\}.
\end{equation}
To apply the cobordism formula \eqref{eq:CompareCoeff2} to compute the
right-hand-side of \eqref{eq:DonaldsonBlowup}, we must discuss the isomorphism
$\Phi$ from the space of symmetric, $d$-linear functionals
on a real vector space $V$ onto the space of degree-$d$ polynomials on $V$, defined by
(see \cite[Section 6.1.1]{FrM})
$$
\Phi(M)(h) := M(\underbrace{h,\dots,h}_\text{$d$ copies}).
$$
If $F=\Phi(M)$ is a degree $d$-polynomial, we can find $M$ by the formula \cite[p. 396]{FrM},
\begin{equation}
\label{eq:Polarization}
M(h_1,\dots,h_d)
=
\left.\frac{1}{d!}
\frac{\rd^d}{\rd t_1\rd t_2\dots\rd t_d}
F(t_1h_1+\dots + t_dh_d)\right|_{t_1=\dots=t_d=0}.
\end{equation}
For the polynomial of degree $\delta-2m+1=i+j+2k$ defined by
$$
F^\varphi_{i,j,k}(\tilde h)
:=
\langle K_\varphi,\tilde h\rangle^i\langle \La,\tilde h\rangle^j Q_{\widetilde X}(\tilde h)^k,
\quad\forall\, \tilde h\in H_2(\widetilde X;\RR),
$$
where (as usual) $\La\in H^2(X;\ZZ)$, the identity \eqref{eq:Polarization} implies that the functional $M^\varphi_{i,j,k}:=\Phi^{-1}(F^\varphi_{i,j,k})$ satisfies
\begin{equation}
\label{eq:BlowUpPolarization}
M^\varphi_{i,j,k}(e,h,\dots,h)
=
\frac{i(-1)^{\varphi+1}}{(\delta-2m+1)}
\langle K,h\rangle^{i-1}\langle \La,h\rangle^j Q_{X}(h)^k,
\quad\forall\, h\in H_2(X;\RR).
\end{equation}
Before applying the cobordism formula
to the right-hand-side of \eqref{eq:DonaldsonBlowup}, we check that the
conditions \eqref{eq:CobordismConditions} of Theorem \ref{thm:Cobordism} hold.

Our assumption that $\tilde w=w+e^*$ ensures that
for $\La\in H^2(X;\ZZ)$, we have $\La+\tilde w\equiv w_2(\widetilde X)$ if and only if
$\La+w\equiv w_2(X)$.  Hence, the condition
\eqref{eq:CobordismCondition1} holds for $w$ and $\La$ on $X$
if and only if it holds for $\tilde w$ and $\La$ on $\widetilde X$.

Because $c(\widetilde X)=c(X)+1$ and $\chi_h(\widetilde X)=\chi_h(X)$, we see that
$\La^2+c(X)+4\chi_h(X)>\delta$ if and only if
$\La^2+c(\widetilde X)+4\chi_h(\widetilde X)>\delta+1$.
Consequently, the condition \eqref{eq:CobordismCondition2} holds for $\La$, $\delta$ and
$X$ if and only if it holds for $\La$, $\delta+1$, and $\widetilde X$.

Since $-\tilde w^2=-w^2+1$, we have $\delta+1\equiv \tilde w^2-3\chi_h(\widetilde X)\pmod 4$
if and only if $\delta\equiv \tilde w^2-3\chi_h(X)\pmod 4$.
Therefore, the condition \eqref{eq:CobordismCondition3} holds for
$w$, $\delta$ and $X$ if and only if it holds for $\tilde w$, $\delta+1$, and $\widetilde X$.

Finally, if the condition \eqref{eq:CobordismCondition4} holds for $\delta$ and $m$,
then it holds for $\delta+1$ and $m$.

Thus, if we assume that the conditions \eqref{eq:CobordismConditions}
in Theorem \ref{thm:Cobordism}
hold for $w$, $\La$, $\delta$, and $m$ on $X$, then they hold for
$\tilde w$, $\La$, $\delta+1$, and $m$ on $\widetilde X$.
Hence, we can apply \eqref{eq:CompareCoeff2}
in Lemma \ref{lem:ReduceCobordismFormToB'Sum}
to compute the right-hand-side of \eqref{eq:DonaldsonBlowup}.
Note that for any $K\in B'(X)$,
$$
\eps(w,K)
\equiv
\eps(\tilde w,K_1)
\equiv
\eps(\tilde w,K_0)+1
\pmod 2,
$$
where $K_0,K_1\in B'(\widetilde X)$ as in \eqref{eq:BasicClasssesOfBlowUp}.
Also, because $\La\in H^2(X;\ZZ)$,
$$
\tilde b_{i,j,k}(\chi_h(\widetilde X),c_1^2(\widetilde X),K_\varphi\cdot \La, \La^2,m)
=
\tilde b_{i,j,k}(\chi_h(X),c_1^2(X)-1,K\cdot\La,\La^2,m).
$$
Applying the definition of $M^\varphi_{i,j,k}$ from \eqref{eq:BlowUpPolarization} to
the cobordism formula \eqref{eq:CompareCoeff2}
for $\La$, $\tilde w$, and $\delta+1$ on $\widetilde X$
then gives us
\begin{align*}
{}&D^{\tilde w}_{\widetilde X}(h^{\delta-2m}ex^m)
\\
&=
\sum_{K_\varphi\in B'(\widetilde X)}
\sum_{\begin{subarray}{c}i+j+2k\\=\delta-2m+1\end{subarray}}
(-1)^{{\eps(w,K_\varphi)}}SW'_{\widetilde X}(K_\varphi)
\\
&\qquad\times
\tilde b_{i,j,k}(\chi_h(X),c_1^2(X)-1,K\cdot\La,\La^2,m)
M^\varphi_{i,j,k}(e,h\dots,h)
\\
{}&=
\sum_{K\in B'(X)}
\sum_{\begin{subarray}{c}i+j+2k\\=\delta-2m+1\end{subarray}}
(-1)^{\eps(w,K)}\SW_X(K)\tilde b_{i,j,k}(\chi_h(X),c_1^2(X)-1,K\cdot\La,\La^2,m)
\\
{}&\qquad\times
\left( M^1_{i,j,k}(e,h,\dots,h) - M^0_{i,j,k}(e,h\dots,h)\right)
\\
{}&=
\sum_{K\in B'(X)}
\sum_{\begin{subarray}{c}i+j+2k\\=\delta-2m+1\end{subarray}}
(-1)^{\eps(w,K)}\SW_X(K)\tilde b_{i,j,k}(\chi_h(X),c_1^2(X)-1,K\cdot\La,\La^2,m)
\\
{}&\qquad\times
\frac{2i}{(\delta-2m+1)}\langle K,h\rangle^{i-1}\langle \La,h\rangle^j Q_X(h)^k
\quad\text{(by \eqref{eq:BlowUpPolarization})}
\\
{}&=
\sum_{K\in B'(X)}
\sum_{\begin{subarray}{l}i+j+2k\\=\delta-2m\end{subarray}}
(-1)^{\eps(w,K)}\SW_X(K)\tilde b_{i+1,j,k}(\chi_h(X),c_1^2(X)-1,K\cdot\La,\La^2,m)
\\
{}&\qquad\times
\frac{2(i+1)}{(\delta-2m+1)}\langle K,h\rangle^i\langle \La,h\rangle^j Q_X(h)^k.
\end{align*}
Combining the preceding equalities  and \eqref{eq:DonaldsonBlowup} gives the result.
\end{proof}

\section{Constraining the coefficients}
\label{sec:CoeffComp}
In this section, we show that the coefficients $\tilde b_{i,j,k}$
appearing in \eqref{eq:CompareCoeff2} which are not determined by
\cite[Proposition 4.8]{FL6} satisfy a difference equation in
the parameter $K\cdot \La$ and thus can be written as a polynomial
in this parameter.

\subsection{Algebraic preliminaries}
\label{subsec:AlgPrelim}

To determine the coefficients $\tilde b_{i,j,k}$ appearing in \eqref{eq:CompareCoeff2},
we compare equations  \eqref{eq:DInvarForWCB'Sum} and \eqref{eq:CompareCoeff2}
on manifolds where Witten's Conjecture \ref{conj:WC} is known to hold and use
the following generalization of \cite[Lemma VI.2.4]{FrM}.

\begin{lem}
\label{lem:AlgCoeff}
\cite[Lemma 4.1]{FL6}
Let $V$ be a finite-dimensional real vector space.
Let $T_1,\dots,T_n $ be linearly independent elements of the dual
space $V^*$.
Let $Q$ be a quadratic form on $V$ which is non-zero
on $\cap_{i=1}^n\Ker T_i$.  Then $T_1,\dots,T_n,Q$ are algebraically
independent in the sense that if
$F(z_0,\dots,z_n)\in \RR[z_0,\dots,z_n]$
and $F(Q,T_1,\dots,T_n):V\to\RR$ is the zero map, then $F(z_0,\dots,z_n)$
is the zero element of $\RR[z_0,\dots,z_n]$.
\end{lem}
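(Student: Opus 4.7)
The plan is to show that for each fixed $(t_1,\dots,t_n)\in\RR^n$, the univariate polynomial $F(z_0,t_1,\dots,t_n)\in\RR[z_0]$ has infinitely many roots and hence vanishes identically; the conclusion then follows by viewing $F$ as a polynomial in $z_0$ whose coefficients lie in $\RR[z_1,\dots,z_n]$ and invoking the basic fact that a real polynomial on $\RR^n$ vanishing on all of $\RR^n$ is zero.

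First I would set up coordinates adapted to $T_1,\dots,T_n$. Because these linear functionals are linearly independent, I can extend $\{T_i\}$ to a basis of $V^*$ and choose vectors $v_1,\dots,v_n\in V$ satisfying $T_i(v_j)=\delta_{ij}$ and $T_i(w)=0$ for every $w\in W:=\bigcap_{i=1}^n\Ker T_i$. Every $v\in V$ then decomposes uniquely as $v=w+\sum_{i=1}^n t_iv_i$ with $w\in W$ and $t_i=T_i(v)$. The hypothesis $Q|_W\neq 0$ forces $\dim W\ge 1$.

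Next, fix $(t_1,\dots,t_n)\in\RR^n$ arbitrarily, set $v_0:=\sum_i t_iv_i$, and let $B$ denote the symmetric bilinear form polarizing $Q$. The restriction of $Q$ to the affine subspace $\{w+v_0:w\in W\}$ is the polynomial function
$$
p(w):=Q(w)+2B(w,v_0)+Q(v_0),\qquad w\in W,
$$
whose degree-two part equals $Q|_W\neq 0$. Hence $p$ is a non-constant polynomial function on the positive-dimensional real vector space $W$. I claim that its image $p(W)\subset\RR$ is infinite: otherwise, listing its values as $c_1,\dots,c_m$, the product $\prod_{i=1}^m(p-c_i)$ would be a nonzero element of the polynomial ring on $W$ that vanishes identically on $W$, contradicting the standard fact that a nonzero polynomial on $\RR^k$ with $k\ge 1$ does not vanish identically.

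Finally, applying the hypothesis $F(Q(v),T_1(v),\dots,T_n(v))=0$ to points $v=w+v_0$ with $w\in W$ yields $F(p(w),t_1,\dots,t_n)=0$ for every $w\in W$. Since $p$ takes infinitely many values, the polynomial $F(z_0,t_1,\dots,t_n)\in\RR[z_0]$ has infinitely many roots and is therefore zero. Writing $F(z_0,z_1,\dots,z_n)=\sum_{d\ge 0}F_d(z_1,\dots,z_n)\,z_0^d$, this shows $F_d(t_1,\dots,t_n)=0$ for every $d\ge 0$ and every $(t_1,\dots,t_n)\in\RR^n$, so each $F_d$ is the zero polynomial and hence $F=0$ in $\RR[z_0,z_1,\dots,z_n]$. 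The only step that genuinely uses a hypothesis beyond formal manipulation is the non-constancy of $p$, which is precisely where $Q|_W\neq 0$ enters; the rest is routine.
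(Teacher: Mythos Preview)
Your argument is correct. The paper does not actually supply a proof of this lemma; it merely cites \cite[Lemma 4.1]{FL6}, which in turn is a mild generalization of \cite[Lemma VI.2.4]{FrM}. So there is no proof in the paper to compare against, but your approach is the standard one: fix the linear coordinates $t_i=T_i(v)$, observe that on the affine slice $v_0+W$ the map $Q$ is a non-constant polynomial (this is exactly where $Q|_W\neq 0$ enters), and conclude that $F(\,\cdot\,,t_1,\dots,t_n)$ has infinitely many roots. One tiny remark: your argument that a non-constant real polynomial on $W$ has infinite image can be shortened by restricting to a line in $W$ on which $p$ is non-constant and invoking the intermediate value theorem, but the product trick you give is equally valid.
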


\subsection{Difference equations}
\label{subsec:DiffEquation}

We review some notation and results for difference
operators.  For $f:\ZZ\to\RR$ and $p,q\in\ZZ$, define
$$
(\nabla^q_pf)(x):=f(x)+(-1)^q f(x+p).
$$
For $a\in\ZZ/2\ZZ$ and $p\in \ZZ$, define $pa,ap\in\ZZ$ by
\begin{equation}
\label{eq:MultZ2andZ}
pa
=
ap
=-\frac{1}{2}\left(-1 +(-1)^a\right)p
=
\begin{cases}
0 & \text{if $a\equiv 0\pmod 2$},\\
p & \text{if $a\equiv 1\pmod 2$.}
\end{cases}
\end{equation}
We recall the

\begin{lem}
\cite[Lemma 4.6]{FL6}
\label{lem:PermutationSumAsDifferenceOperator}
For all $(p_1,\dots,p_n)$ and $(q_1,\dots,q_n)\in\ZZ^n$, there holds
$$
\sum_{\varphi\in(\ZZ/2\ZZ)^n}
(-1)^{\sum_{u=1}^n q_u\pi_u(\varphi)} f\left(x+\sum_{u=1}^n p_u \pi_u(\varphi)\right)
=
(\nabla^{q_1}_{p_1}\nabla^{q_2}_{p_2}\dots\nabla^{q_n}_{p_n}f)(x),
$$
where $\pi_u:(\ZZ/2\ZZ)^n\to\ZZ/2\ZZ$ is projection onto the $u$-th factor
and, for a constant
function $C$, there holds
\begin{equation}
\label{eq:ConstantDifference}
(\nabla^{q_n}_{p_n}\nabla^{q_{n-1}}_{p_{n-1}}\dots\nabla^{q_1}_{p_1}C)
=
\begin{cases}
0, & \text{if $\exists u$ with $1\le u\le n$ and $q_u\equiv 1\pmod 2$,}
\\
2^{n}C, & \text{if $q_u\equiv 0\pmod 2$ $\forall\, u$ with $1\le u\le n$.}
\end{cases}
\end{equation}
\end{lem}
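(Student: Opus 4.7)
The plan is to prove both assertions by a straightforward induction on $n$, exploiting the product structure $(\ZZ/2\ZZ)^n \cong (\ZZ/2\ZZ)^{n-1} \times \ZZ/2\ZZ$ and the definition $(\nabla^q_p f)(x) = f(x) + (-1)^q f(x+p)$.

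For the main identity, I would first verify the base case $n=1$ directly: the two-term sum over $\varphi \in \ZZ/2\ZZ$ yields $f(x) + (-1)^{q_1} f(x+p_1)$, which is precisely $(\nabla^{q_1}_{p_1} f)(x)$. For the inductive step, write $\varphi = (\varphi', \varphi_n)$ with $\varphi' \in (\ZZ/2\ZZ)^{n-1}$ and $\varphi_n \in \ZZ/2\ZZ$, and separate the sum into an outer sum over $\varphi'$ and an inner sum over $\varphi_n$. The inner sum, with $\varphi'$ fixed, is
$$
\sum_{\varphi_n \in \ZZ/2\ZZ} (-1)^{q_n \varphi_n} f\Bigl(x + \sum_{u=1}^{n-1} p_u \pi_u(\varphi') + p_n \varphi_n\Bigr) = (\nabla^{q_n}_{p_n} f)\Bigl(x + \sum_{u=1}^{n-1} p_u \pi_u(\varphi')\Bigr),
$$
by the base case. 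Applying the inductive hypothesis to the function $g := \nabla^{q_n}_{p_n} f$ then gives exactly $(\nabla^{q_1}_{p_1} \cdots \nabla^{q_{n-1}}_{p_{n-1}} \nabla^{q_n}_{p_n} f)(x)$, which is the desired expression (the operators $\nabla^{q}_{p}$ commute, as is immediate from the definition, so the ordering in the statement is irrelevant).

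For the constant case, I would apply the identity just proved to the constant function $f \equiv C$. Since $f(x + y) = C$ for every $y$, the $f$ factor comes out of the sum and we are left with
$$
(\nabla^{q_n}_{p_n} \cdots \nabla^{q_1}_{p_1} C) = C \sum_{\varphi \in (\ZZ/2\ZZ)^n} (-1)^{\sum_{u=1}^n q_u \pi_u(\varphi)} = C \prod_{u=1}^{n} \sum_{\varphi_u \in \ZZ/2\ZZ} (-1)^{q_u \varphi_u} = C \prod_{u=1}^{n} \bigl(1 + (-1)^{q_u}\bigr),
$$
the factorization being just the distributive law on $(\ZZ/2\ZZ)^n$. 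Each factor $1 + (-1)^{q_u}$ equals $2$ when $q_u$ is even and $0$ when $q_u$ is odd, so the product vanishes as soon as any $q_u$ is odd and otherwise equals $2^n$. This yields the two cases of \eqref{eq:ConstantDifference}.

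Neither step looks like a genuine obstacle: the induction is formal and the constant evaluation reduces to a one-line product computation. The only point requiring mild care is checking that the operators $\nabla^{q_u}_{p_u}$ commute, so that the order in which they are composed does not affect either side of the claimed identity; this is immediate from the formula $(\nabla^q_p f)(x) = f(x) + (-1)^q f(x+p)$, but should be noted explicitly to justify comparing the ordered compositions in the statement with the ones produced by the inductive argument.
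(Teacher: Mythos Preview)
Your proof is correct. The paper itself does not supply a proof of this lemma: it is quoted from \cite[Lemma 4.6]{FL6} and stated without argument, so there is nothing to compare against. Your induction on $n$ via the splitting $(\ZZ/2\ZZ)^n \cong (\ZZ/2\ZZ)^{n-1}\times\ZZ/2\ZZ$ is the natural approach and goes through without difficulty; the factorization of the sum in the constant case is likewise immediate. One minor remark: for the first identity your induction already produces the composition in the stated order $\nabla^{q_1}_{p_1}\cdots\nabla^{q_n}_{p_n}$, so the commutativity observation, while true and easy, is not strictly needed there. It is only in \eqref{eq:ConstantDifference} that the order is reversed, and there the result is visibly independent of the $p_u$, so no reordering issue arises.
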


We will also use the following similar result
(compare \cite[Lemma 2.22]{Elaydi}).

\begin{lem}
\label{lem:IteratedDifferenceOp}
For 
$f:\ZZ\to\RR$ and $\la\in\ZZ$, there holds
$$
\left(\left(\nabla^1_\la\right)^n f\right)(x)
=
\sum_{i=0}^n (-1)^i \binom{n}{i} f(x+i\la).
$$
\end{lem}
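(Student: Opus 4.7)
The plan is to proceed by straightforward induction on $n$, using Pascal's rule $\binom{n}{i} + \binom{n}{i-1} = \binom{n+1}{i}$ to handle the induction step. The base case $n=1$ is simply the definition: since $\nabla^1_\la f(x) = f(x) - f(x+\la)$, the right-hand side reduces to $\binom{1}{0}f(x) - \binom{1}{1}f(x+\la)$, which matches.

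For the inductive step, I would assume the identity holds for $n$ and compute
\begin{align*}
\left((\nabla^1_\la)^{n+1} f\right)(x)
&= \left((\nabla^1_\la)^n f\right)(x) - \left((\nabla^1_\la)^n f\right)(x+\la) \\
&= \sum_{i=0}^n (-1)^i \binom{n}{i} f(x+i\la) - \sum_{i=0}^n (-1)^i \binom{n}{i} f(x+(i+1)\la).
\end{align*}
After re-indexing the second sum by setting $j = i+1$ and combining like terms indexed $1 \le i \le n$, Pascal's rule collapses the bracketed coefficient $\binom{n}{i} + \binom{n}{i-1}$ into $\binom{n+1}{i}$; the boundary terms $f(x)$ (at $i=0$) and $(-1)^{n+1} f(x+(n+1)\la)$ (at $i=n+1$) then match $\binom{n+1}{0}$ and $\binom{n+1}{n+1}$, completing the induction.

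There is no real obstacle here; the result is a direct finite-difference identity analogous to the binomial theorem, and the argument is a one-line induction once the sign conventions from the definition $(\nabla^q_p f)(x) = f(x) + (-1)^q f(x+p)$ are matched to give the negative sign when $q=1$. The only care needed is in the index bookkeeping when shifting the summation variable and separating the boundary terms from the telescoping middle.
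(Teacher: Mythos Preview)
Your induction argument is correct. The paper's own proof takes a slightly different, more compact route: it observes that $\nabla^1_\la = I - E_\la$, where $E_\la$ is the translation operator $E_\la f(x) = f(x+\la)$ and $I$ is the identity, and then simply invokes the binomial expansion of $(I - E_\la)^n$ (valid because $I$ and $E_\la$ commute). Your approach spells out by hand the induction that underlies that binomial expansion, so the two arguments are equivalent in content; the operator formulation just compresses the bookkeeping into one line, while your version makes the Pascal-rule mechanism explicit.
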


\begin{proof}
If $E_\la$ is the translation operator, $E_\la f(x)=f(x+\la)$, and $I$ is the identity, then $\nabla^1_\la =I-E_\la$.
The conclusion then follows from a binomial expansion.
\end{proof}

We have the following

\begin{lem}
\label{lem:OneDifference}
Let $\la\in \ZZ$ and $p:\ZZ\to\RR$ be a function.
\begin{enumerate}
\item If $\nabla^1_\la p(x)$ is a polynomial of degree $n$ in $x$, then $p(\la x)$ is a polynomial of degree $n+1$;

\item If $\nabla^1_\la p(x)=0$, then $p(\la x)$ is constant.
\end{enumerate}
\end{lem}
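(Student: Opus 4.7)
The plan is to reduce both parts of the lemma to the classical theory of the forward difference operator on functions $\ZZ \to \RR$, applied to the auxiliary function $q : \ZZ \to \RR$ defined by $q(x) := p(\la x)$. The key observation is the identity
\begin{equation*}
q(x+1) - q(x) = p(\la x + \la) - p(\la x) = -(\nabla^1_\la p)(\la x),
\end{equation*}
which converts the hypotheses on $\nabla^1_\la p$ into hypotheses on the ordinary forward difference $\Delta q(x) := q(x+1) - q(x)$.

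For part (2), the hypothesis $\nabla^1_\la p \equiv 0$ gives $p(x) = p(x+\la)$ for all $x \in \ZZ$, so $p$ is $\la$-periodic (the case $\la = 0$ being vacuous). A trivial induction on $|x|$ then yields $p(\la x) = p(0)$ for every $x \in \ZZ$, so $q$ is constant. For part (1), one may assume $\la \neq 0$, since for $\la = 0$ the hypothesis is vacuous for every $n \geq 0$; then the substitution $x \mapsto \la x$ preserves polynomial degree, so the identity above shows that $\Delta q$ agrees on $\ZZ$ with a polynomial of degree $n$.

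It then suffices to invoke the standard fact that if $\Delta q : \ZZ \to \RR$ agrees with a polynomial of degree $n$, then $q$ agrees with a polynomial of degree $n+1$. I would establish this by exhibiting an antidifference $\tilde q \in \RR[x]$ of degree $n+1$ with $\Delta \tilde q = \Delta q$ on $\ZZ$, available via the Newton (binomial-coefficient) basis, where $\Delta\binom{x}{k+1} = \binom{x}{k}$ so that antidifferencing is literally re-indexing of coefficients. Then $\Delta(q - \tilde q) \equiv 0$ on $\ZZ$ forces $q - \tilde q$ to be constant by part (2) applied with $\la = 1$, hence $q(x) = p(\la x)$ agrees with $\tilde q$ plus a constant, which is a polynomial of degree $n+1$.

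There is no serious obstacle: the only mildly technical point is the classical antidifference construction, which is routine once one works in the falling-factorial basis, and the only notational subtlety is distinguishing a polynomial function on $\ZZ$ from a polynomial in $\RR[x]$, which the Newton-basis construction handles automatically.
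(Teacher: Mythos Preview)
Your proposal is correct and follows essentially the same approach as the paper: both proofs set $q(x) := p(\la x)$, observe that $(\nabla^1_1 q)(x) = (\nabla^1_\la p)(\la x)$, and reduce to the classical fact that a function on $\ZZ$ whose forward difference is a degree-$n$ polynomial is itself a degree-$(n{+}1)$ polynomial. The only cosmetic difference is that you construct the antidifference directly in the Newton binomial basis, whereas the paper peels off the leading monomial and inducts on degree.
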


\begin{proof}
Note that the lemma is trivial if $\la=0$. The second statement follows easily from the definitions.

We prove the first statement by induction on $n$.
If $n=0$, then there is a constant $C_1$ such that $p(x)-p(x+\la)=C_1$ for all
$x$ and hence $p(\la x)=-C_1x+C_2$, where $C_2=p(0)$.

For the inductive step, assume that $\nabla^1_\la p(x)$ is a polynomial of degree $m$
and define $q(x):=p(\la x)$.  Because
$(\nabla^1_1 q)(x)=(\nabla^1_\la p)(\la x)$, we see that
$(\nabla^1_1 q)(x)= C x^m + r(x)$, where $r(x)$ is a
polynomial of degree $m-1$.  We compute that
$$
\nabla^1_1 \left( q(x) +\frac{C}{m+1} x^{m+1}\right)
$$
is a polynomial of degree $m-1$ and so, by induction, $q(x)+ C x^{m+1}/(m+1)$
is a polynomial of degree $m$.  Hence, $q(x)=p(\la x)$ is a
polynomial of degree $m+1$, completing the induction.
\end{proof}

\begin{cor}
\label{cor:DegreeNPolyn}
Let $n\geq 1$ be an integer and for $\la\neq 0$,
let $c:\ZZ\to\RR$ be a function satisfying,
$$
\underbrace{(\nabla^1_\la\nabla^1_{\la}\cdots \nabla^1_\la c)}_{\text{$n$ copies}}(\la x)=0,
$$
for all $x\in\ZZ$.  Then $c_{\la}(x)=c(\la x)$ is a polynomial in $x$ of degree $n-1$.
\end{cor}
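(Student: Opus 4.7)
The plan is to reduce the hypothesis to a vanishing statement about the ordinary first-difference of $c_\la$, and then induct on $n$ using Lemma \ref{lem:OneDifference}. First, I would verify the key intertwining identity: for any $p \colon \ZZ \to \RR$,
$$
(\nabla^1_\la p)(\la x) \;=\; p(\la x) - p(\la(x+1)) \;=\; (\nabla^1_1 p_\la)(x),
$$
where $p_\la(y) := p(\la y)$. Iterating this identity $n$ times (noting that the intermediate functions $(\nabla^1_\la)^k c$ are again functions $\ZZ \to \RR$ to which it can be applied) the hypothesis becomes
$$
(\nabla^1_1)^n c_\la(x) = 0 \quad \text{for all } x \in \ZZ.
$$
Crucially, every argument of $c$ that appears in computing $(\nabla^1_\la)^n c(\la x)$ is a multiple of $\la$, so no information outside of $c_\la$ is lost or needed.

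Next, I would prove by induction on $n$ that any function $f \colon \ZZ \to \RR$ with $(\nabla^1_1)^n f \equiv 0$ is a polynomial of degree $n-1$. The base case $n=1$ is exactly Lemma \ref{lem:OneDifference}(2) applied with $\la = 1$: $\nabla^1_1 f = 0$ forces $f$ to be constant, i.e., a polynomial of degree $0 = n-1$. For the inductive step, assuming the claim for $n-1$, suppose $(\nabla^1_1)^n f \equiv 0$ and set $d := \nabla^1_1 f$. Then $(\nabla^1_1)^{n-1} d \equiv 0$, so by induction $d$ is a polynomial of degree $n-2$. Applying Lemma \ref{lem:OneDifference}(1) with $\la = 1$ (its degree $n$ there playing the role of our $n-2$) then promotes $f$ to a polynomial of degree $n-1$.

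Specializing this general statement to $f = c_\la$ yields the corollary. The argument has no real obstacle beyond bookkeeping; the main thing to watch is that Lemma \ref{lem:OneDifference} is quoted with its own parameter set to $1$, which is legitimate since its proof only excludes $\la = 0$, and that the intertwining $(\nabla^1_\la p)(\la x) = (\nabla^1_1 p_\la)(x)$ really does iterate cleanly because each application substitutes $y = \la x$ into a function of the form $(\nabla^1_\la)^{k} c$.
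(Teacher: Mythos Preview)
Your proposal is correct and follows essentially the same approach as the paper: reduce to $(\nabla^1_1)^n c_\la \equiv 0$ and then induct using Lemma~\ref{lem:OneDifference}. The only cosmetic difference is that the paper obtains the reduction by invoking the binomial expansion of Lemma~\ref{lem:IteratedDifferenceOp} (which, evaluated at $\la x$, immediately gives $(\nabla^1_\la)^n c(\la x)=(\nabla^1_1)^n c_\la(x)$), whereas you derive the same identity by iterating the one-step intertwining $(\nabla^1_\la p)(\la x)=(\nabla^1_1 p_\la)(x)$; the subsequent induction is identical.
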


\begin{proof}
From Lemma \ref{lem:IteratedDifferenceOp}, one can see that
$c_\la$ satisfies $(\nabla^1_1\cdots\nabla^1_1 c_\la)(x)=0$.
The result then follows from Lemma \ref{lem:OneDifference} and induction on $n$.
\end{proof}

\subsection{The example four-manifolds and blow-up formulas}
\label{subsec:ExManifoldsBlowUp}
In \cite[Section 4.2]{FL6}, we used the manifolds constructed by
Fintushel, Park and Stern  in \cite{FSParkSympOneBasic}
to give a family of standard
four-manifolds $X_q$, for $q=2,3,\dots$, obeying the following conditions:
\begin{enumerate}
\item
$X_q$ satisfies Witten's Conjecture \ref{conj:WC};
\item
For $q=2,3,\dots$, one has $\chi_h(X_q)=q$ and $c(X_q)=3$;
\item
$B'(X_q)=\{K\}$ with $K\neq 0$;
\item
For each $q$,
there are classes $f_1,f_2\in H^2(X_q;\ZZ)$ satisfying
\begin{subequations}
\begin{align}
\label{eq:HyperbolicSummand1}
{}& f_1\cdot f_2=1 \quad\text{and}\quad  f_i^2=0 \quad\hbox{and}\quad f_i\cdot K=0 \quad\hbox{for}\quad i=1,2,
\\
\label{eq:HyperbolicSummand2}
{}&\text{The cohomology classes $\{f_1,f_2,K\}$ are linearly independent in $H^2(X_q;\RR)$,}
\\
\label{eq:HyperbolicSummand3}
{}&\text{The restriction of $Q_{X_q}$ to $\Ker f_1\cap\Ker f_2\cap \Ker K$ is non-zero.}
\end{align}
\end{subequations}
\end{enumerate}
Let $X_q(n)$ be the blow-up of $X_q$ at $n$ points,
\begin{equation}
\label{eq:BlownUpExampleNotation}
X_q(n):=
X_q\underbrace{\# \overline{\CC\PP}^2\cdots \# \overline{\CC\PP}^2}_{\text{$n$ copies}}.
\end{equation}
Then $X_q(n)$ is a standard four-manifold of Seiberg--Witten simple type
and satisfies Witten's Conjecture \ref{conj:WC}
by Theorem \ref{thm:WCBlowDownInvariance},
with
\begin{equation}
\label{eq:BlownUpExampleChar}
\chi_h(X_q(n))=q,
\quad c_1^2(X_q(n))=q-n-3,
\quad\hbox{and}\quad c(X_q(n))=n+3.
\end{equation}
We will consider both the homology and cohomology of $X_q$ as subspaces
of those of $X_q(n)$.
Let $e_u^*\in H^2(X_q(n);\ZZ)$ be the Poincar\'e dual of the $u$-th exceptional class.
Let $\pi_u:(\ZZ/2\ZZ)^n\to\ZZ/2\ZZ$ be projection onto the $u$-th factor.
For $\varphi\in (\ZZ/2\ZZ)^n$, we define
\begin{equation}
\label{eq:BasicClassesOnBlowUp}
K_\varphi:=K+\sum_{u=1}^n (-1)^{\pi_u(\varphi)}e_u^*
\quad\hbox{and}\quad
K_0:=K+\sum_{u=1}^n e_u^*.
\end{equation}
By Theorem \ref{thm:FroyshovSWBlowUp},
\begin{equation}
\label{eq:BlowUpBasicClasses}
B'(X_q(n))
=
\{ K_\varphi:\varphi\in (\ZZ/2\ZZ)^n\},
\end{equation}
and, for all $\varphi\in (\ZZ/2\ZZ)^n$,
\begin{equation}
\label{eq:SWInvarOfBlowUpExamples}
\SW'_{X_q(n)}(K_\varphi)=\SW'_{X_q}(K).
\end{equation}
Because $X_q(n)$ has Seiberg--Witten simple type, we have
\begin{equation}
\label{eq:SWSimpleTypeOnBlowUp}
K_\varphi^2=c_1^2(X_q(n))\quad\text{for all $\varphi\in (\ZZ/2\ZZ)^n$}.
\end{equation}
In addition, because $K\neq 0$, we see that
\begin{equation}
\label{eq:NoZeroInB}
0\notin B'(X_q(n)).
\end{equation}
Noting that the manifolds $X_q(n)$ satisfy Witten's Conjecture \ref{conj:WC},
Lemma \ref{lem:AlgCoeff} and the equality given by combining
equations  \eqref{eq:DInvarForWCB'Sum} and \eqref{eq:CompareCoeff2}, applied to the manifolds $X_q(n)$,
will show that the coefficients $\tilde b_{i,j,k}$ satisfy certain difference equations. Those
difference equations will allow us to prove Theorem \ref{thm:SCSTImpliesWC}.

For $n\ge 2$, the set $B'(X_q(n))$ is not linearly independent
in $H^2(X_q(n);\RR)$.
To apply Lemma \ref{lem:AlgCoeff}, we need to replace $B'(X_q(n))$ with
a linearly independent set.  To this end, we give
the following formula for the Donaldson invariants of
$X_q(n)$.  It differs from \cite[Lemma 4.7]{FL6}
in the change of coefficients from $b_{i,j,k}$ to $\tilde b_{i,j,k}$
and in our use of the linearly independent set
$K\pm e_1^*,e_2^*,\dots,e_n^*$.

\begin{lem}
\label{lem:BlowUpCobordism}
For $n,q\in\ZZ$ with $n\ge 1$ and $q\ge 2$,
let $X_q(n)$ be the manifold defined in \eqref{eq:BlownUpExampleNotation}.
For $\La,w\in H^2(X_q;\ZZ)$ and $\delta,m\in\NN$ satisfying $\La-w\equiv w_2(X_q)\pmod 2$
and $\delta-2m\ge 0$, define $\tilde w,\tilde\La \in H^2(X_q(n);\ZZ)$ by
\begin{equation}
\label{eq:DefinetwtLa}
\tilde w:=w+\sum_{u=1}^n w_ue_u^*
\quad\hbox{and}\quad
\tilde\La:=\La+\sum_{u=1}^n \la_u e_u^*,
\end{equation}
where $w_u,\la_u\in\ZZ$ and $w_u+\la_u\equiv 1\pmod 2$ for $u=1,\dots,n$.
We assume that
\begin{subequations}
\begin{align}
\label{eq:BlowUpFormulaCondition1}
\La^2>\delta-(n+3)-4q +\sum_{u=1}^n \la_u^2,
\\
\label{eq:BlowUpFormulaCondition2}
\delta\equiv -w^2+\sum_{u=1}^n w_u^2-3q\pmod 4.
\end{align}
\end{subequations}
Denote $x:=\tilde K_\varphi\cdot\tilde\La$ and, for
$i,j,k\in\NN$ satisfying $i+j+2k+2m=\delta$, write
$$
\tilde b_{i,j,k}(x)
=
\tilde b_{i,j,k}(\chi_h(X_q(n)),c_1^2(X_q(n)),x,\tilde\La^2,m).
$$
Then, for $x_0=K_0\cdot\tilde\La$, where $K_0$ is defined in \eqref{eq:BasicClassesOnBlowUp}, the expressions
\begin{equation}
\label{eq:BlownUpUsefulCobordismFormula}
\begin{aligned}
{}&
\sum_{\begin{subarray}{c}i_1+\cdots+i_n+2k\\=\delta-2m\end{subarray}}
\frac{(\delta-2m)!}{2^{k+n-m}k!i_1!\cdots i_n!}
p^{\tilde w}(i_2,\dots,i_n)
\left(\prod_{u=2}^n \langle e_u^*,h\rangle^{i_u}\right)
Q_{X_q(n)}(h)^k
\\
{}&\qquad\times
\left(
\langle K+e_1^*,h\rangle^{i_1}
+
(-1)^{w_1}\langle K-e_1^*,h\rangle^{i_1}
\right)
\\
{}&\quad=
\sum_{\begin{subarray}{c}i_1+\cdots+i_n+j+2k\\=\delta-2m\end{subarray}}
\binom{i_1+\cdots+i_n}{i_1,\dots,i_n}
\langle \tilde\La,h\rangle^{j}
\left(\prod_{u=2}^n \langle e_u^*,h\rangle^{i_u}\right)
Q_{X_q(n)}(h)^k
\\
{}&\qquad\qquad
\times
\bigg(
\nabla^{i_2+w_2}_{2\la_2}\cdots \nabla^{i_n+w_n}_{2\la_n}
\tilde b_{i,j,k}(x_0)
\langle K+e_1^*,h\rangle^{i_1}
\\
{}&\qquad\qquad\quad
+
(-1)^{w_1}
\nabla^{i_2+w_2}_{2\la_2}\cdots \nabla^{i_n+w_n}_{2\la_n}
\tilde b_{i,j,k}(x_0+2\la_1)
\langle K-e_1^*,h\rangle^{i_1}
\bigg),
\end{aligned}
\end{equation}
are both equal to the following multiple of the Donaldson invariant,
\[
\frac{(-1)^{\eps(\tilde w,\varphi_0)}}{\SW'_{X_q}(K)}
D^{\tilde w}_{X_q(n)}(h^{\delta-2m}x^m),
\]
where $\tilde \La$ is as defined in \eqref{eq:DefinetwtLa} and
\begin{equation}
\label{eq:DefineSumFactor1}
p^{\tilde w}(i_2,\dots,i_n)
=
\begin{cases}
0 & \text{if $\exists u$ with $2\le u\le n$ and $w_u+i_u\equiv 1\pmod 2$,}
\\
2^{n-1} & \text{if $w_u+i_u\equiv 0\pmod 2$ $\forall\, u$ with $2\le u\le n$.}
\end{cases}
\end{equation}
\end{lem}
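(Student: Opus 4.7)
The plan is to compute $D^{\tilde w}_{X_q(n)}(h^{\delta-2m}x^m)$ in two ways — once via Witten's Conjecture and once via the $\SO(3)$-monopole cobordism formula — and equate the resulting polynomial identities in $h$. Since $X_q$ satisfies Witten's Conjecture \ref{conj:WC} by construction, so does $X_q(n)$ by Theorem \ref{thm:WCBlowDownInvariance}, and Lemma \ref{lem:ReduceDFormToB'Sum} applies. Using $B'(X_q(n))$ from \eqref{eq:BlowUpBasicClasses}, the identity $\SW'_{X_q(n)}(K_\varphi) = \SW'_{X_q}(K)$ from \eqref{eq:SWInvarOfBlowUpExamples}, $\nu(K_\varphi) = 1$ by \eqref{eq:NoZeroInB}, and $c(X_q(n)) = n+3$, Witten's formula produces the left-hand side of \eqref{eq:BlownUpUsefulCobordismFormula} up to the common factor $(-1)^{\eps(\tilde w, K_0)}\SW'_{X_q}(K)$. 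Meanwhile, the cobordism formula \eqref{eq:CompareCoeff2} of Lemma \ref{lem:ReduceCobordismFormToB'Sum} applied to $(X_q(n), \tilde w, \tilde\La, \delta, m)$ produces the right-hand side; here one must verify \eqref{eq:CobordismConditions} on $X_q(n)$, which using $w_2(X_q(n)) = w_2(X_q) + \sum_u e_u^*$ mod~$2$, together with $w_u + \la_u \equiv 1 \pmod 2$, $\tilde\La^2 = \La^2 - \sum_u \la_u^2$, $\tilde w^2 = w^2 - \sum_u w_u^2$, and $\chi_h(X_q(n)) = q$, reduces precisely to the hypotheses \eqref{eq:BlowUpFormulaCondition1} and \eqref{eq:BlowUpFormulaCondition2}.

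The common step in both computations is the sum over $\varphi \in (\ZZ/2\ZZ)^n$. Regrouping $K_\varphi = \bigl(K + (-1)^{\pi_1(\varphi)}e_1^*\bigr) + \sum_{u\geq 2}(-1)^{\pi_u(\varphi)}e_u^*$ and expanding $\langle K_\varphi, h\rangle^i$ by the multinomial theorem isolates the $\langle K \pm e_1^*, h\rangle^{i_1}$ factor from the factors $\prod_{u\geq 2}\langle e_u^*, h\rangle^{i_u}$. A direct computation using $e_u^* \cdot e_v^* = -\de_{uv}$ and $K \cdot e_u^* = 0$ gives $\tilde w \cdot (K_\varphi - K_0) = 2\sum_u w_u \pi_u(\varphi)$ and $K_\varphi \cdot \tilde\La = x_0 + 2\sum_{u:\pi_u(\varphi)=1}\la_u$, so the orientation sign factorizes as
$$(-1)^{\eps(\tilde w, K_\varphi)} = (-1)^{\eps(\tilde w, K_0)}\prod_{u=1}^n (-1)^{w_u \pi_u(\varphi)}.$$
Splitting the $\varphi$-sum into the $\pi_1$-sum (which we do not collapse, so as to retain the two-term $(K+e_1^*)/(K-e_1^*)$ combination with the sign $(-1)^{w_1}$) and the $(\pi_2,\ldots,\pi_n)$-sum, the latter acts on a constant on the Witten side and reduces via the constant case \eqref{eq:ConstantDifference} of Lemma \ref{lem:PermutationSumAsDifferenceOperator} to the factor $p^{\tilde w}(i_2,\ldots,i_n)$ of \eqref{eq:DefineSumFactor1}. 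On the cobordism side the same $(\pi_2,\ldots,\pi_n)$-sum acts on $\tilde b_{i,j,k}\bigl(y + 2\sum_{u\geq 2:\pi_u=1}\la_u\bigr)$ with $y \in \{x_0, x_0 + 2\la_1\}$, and Lemma \ref{lem:PermutationSumAsDifferenceOperator} converts it into $\bigl(\nabla^{w_2+i_2}_{2\la_2}\cdots \nabla^{w_n+i_n}_{2\la_n}\tilde b_{i,j,k}\bigr)(y)$.

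Equating the two expressions for $D^{\tilde w}_{X_q(n)}(h^{\delta-2m}x^m)$, the common factor $(-1)^{\eps(\tilde w, K_0)}\SW'_{X_q}(K)$ cancels (it is nonzero since $K \in B'(X_q)$), and combining the multinomial coefficient $\binom{i}{i_1,\ldots,i_n}$ with the prefactor $(\delta-2m)!/(2^{k+c-3-m}k!\,i!)$ on the Witten side — using $c(X_q(n)) - 3 = n$ — yields the required $(\delta-2m)!/(2^{k+n-m}k!\,i_1!\cdots i_n!)$. The main obstacle is purely bookkeeping: tracking the signs $(-1)^{\eps(\tilde w, K_\varphi)}$ carefully enough to see the claimed factorization, verifying that the cobordism hypotheses \eqref{eq:CobordismConditions} transform correctly under blow-up, and matching the prefactors on both sides so that no spurious constants remain.
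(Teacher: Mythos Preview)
Your proposal is correct and follows essentially the same approach as the paper: compute $D^{\tilde w}_{X_q(n)}(h^{\delta-2m}x^m)$ once via Lemma \ref{lem:ReduceDFormToB'Sum} and once via Lemma \ref{lem:ReduceCobordismFormToB'Sum}, expand $\langle K_\varphi,h\rangle^i$ multinomially, factor the orientation sign as $(-1)^{\eps(\tilde w,K_0)}\prod_u(-1)^{w_u\pi_u(\varphi)}$, split off the $\pi_1$-summand, and collapse the remaining $(\ZZ/2\ZZ)^{n-1}$-sum via Lemma \ref{lem:PermutationSumAsDifferenceOperator} (the constant case \eqref{eq:ConstantDifference} on the Witten side yielding $p^{\tilde w}$, the general case on the cobordism side yielding the iterated $\nabla$ operators). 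The verification of the cobordism hypotheses and the final cancellation of $(-1)^{\eps(\tilde w,K_0)}\SW'_{X_q}(K)$ match the paper as well.
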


\begin{proof}
We first verify that $\tilde\La$, $\tilde w$, $\delta$, and $m$
satisfy the hypotheses \eqref{eq:CobordismConditions}
in Theorem \ref{thm:Cobordism} for the manifold $X_q(n)$.

Because $\La-w\equiv w_2(X_q)$ and $\la_u+w_u\equiv 1\pmod 2$,
the definition \eqref{eq:DefinetwtLa} of
$\tilde\La$ and $\tilde w$
and the equality
$w_2(X_q(n))\equiv w_2(X_q)+\sum_{u=1}^n e_u^*\pmod 2$
imply that $\tilde\La$ and $\tilde w$  satisfy the condition \eqref{eq:CobordismCondition1}
for $X_q(n)$.

The definition \eqref{eq:DefinetwtLa} of $\tilde\La$ also implies that
$\tilde\La^2=\La^2-\sum_{u=1}^n\lambda_u^2$.
Together with \eqref{eq:BlowUpFormulaCondition1}
and the equalities $c(X_q(n))=n+3$ and $\chi_h(X_q(n))=q$
from \eqref{eq:BlownUpExampleChar},
this yields
$$
\tilde\La^2
=
\La^2-\sum_{u=1}^n\la_u^2
>
\delta -c(X_q(n))-4\chi_h(X_q(n)),
$$
so $\tilde\La$ and $\delta$ satisfy \eqref{eq:CobordismCondition2} on $X_q(n)$.

The definition of $\tilde w$ gives $-\tilde w^2=-w^2+\sum_{u=1}^n w_u^2$.
Combining this equality with the assumption \eqref{eq:BlowUpFormulaCondition2}
and the equality $\chi_h(X_q(n))=q$
from \eqref{eq:BlownUpExampleChar},
we obtain
$$
\delta\equiv -\tilde w^2-3\chi_h(X_q(n))\pmod 4,
$$
so $\delta$ and $\tilde w$ satisfy \eqref{eq:CobordismCondition3} on $X_q(n)$.

The condition \eqref{eq:CobordismCondition4} appears directly
as the hypothesis $\delta\ge 2m$ in Lemma \ref{lem:BlowUpCobordism}.
Hence, we can apply Theorem \ref{thm:Cobordism} with
$\tilde\La$, $\tilde w$, $\delta$, and $m$ for the manifold $X_q(n)$.

Because $X_q(n)$ satisfies Witten's Conjecture, we can
apply Lemma \ref{lem:ReduceDFormToB'Sum} to compute the Donaldson invariant
$D^{\tilde w}_{X_q(n)}(h^{\delta-2m}x^m)$.
By \eqref{eq:NoZeroInB}, we have $0\notin B'(X_q(n))$, so $\nu(K_\varphi)=1$
(where $\nu(K)$ is defined in \eqref{eq:DiracSpincFunction}) for all
$\varphi\in (\ZZ/2\ZZ)^n$.
If we abbreviate  the orientation factor
$\eps(\tilde w,K_\varphi)$ in \eqref{eq:DefineOrientationEps} by $\eps(\tilde w,\varphi)$,
use the equality
$\SW_{X_q}'(K)=\SW_{X_q(n)}'(K_\varphi)$
from Theorem \ref{thm:FroyshovSWBlowUp},
and note that by \eqref{eq:BlowUpBasicClasses} the set $B'(X_q(n))$ is enumerated by $(\ZZ/2\ZZ)^n$,
then Lemma \ref{lem:ReduceDFormToB'Sum} implies that
\begin{equation}
\label{eq:CoeffDetLHS1}
\begin{aligned}
{}&
D^{\tilde w}_{X_q(n)}(h^{\delta-2m}x^m)
\\
{}&\quad=
\sum_{\begin{subarray}{l}i+2k\\=\delta-2m\end{subarray}}
\sum_{\varphi\in (\ZZ/2\ZZ)^n}
(-1)^{{\eps(\tilde w,\varphi)}}
\frac{\SW_{X_q}'(K) (\delta-2m)!}{2^{k+n-m} k!i!}
\langle K_\varphi,h\rangle^iQ_{X_q(n)}(h)^k.
\end{aligned}
\end{equation}
In addition, the identity \eqref{eq:CompareCoeff2} in Lemma \ref{lem:ReduceCobordismFormToB'Sum} gives
\begin{equation}
\label{eq:CoeffDetRHS1}
\begin{aligned}
{}&
D^{\tilde w}_{X_q(n)}(h^{\delta-2m}x^m)
\\
{}&\quad
=
\sum_{\begin{subarray}{l}i+j+2k\\=\delta-2m\end{subarray}}
\sum_{\varphi\in (\ZZ/2\ZZ)^n}
(-1)^{{\eps(\tilde w,\varphi)}}
\SW_{X_q}'(K)
\tilde b_{i,j,k}( K_\varphi\cdot\tilde\La)
\langle K_\varphi,h\rangle^i
\langle \tilde\La,h\rangle^j
Q_{X_q(n)}(h)^k.
\end{aligned}
\end{equation}
We will rewrite \eqref{eq:CoeffDetLHS1} and \eqref{eq:CoeffDetRHS1} as sums over terms
of the form
\begin{equation}
\label{eq:BlowUpTerm}
\left\langle K+(-1)^{\pi_1(\varphi)}e_1^*,h\right\rangle^{i_1}
\left(\prod_{u=2}^n\langle e_u^*,h\rangle^{i_u}\right)
\langle \La,h\rangle^j Q_{\widetilde X(n)}(h)^k.
\end{equation}
Using the definition of $K_\varphi$ in \eqref{eq:BasicClassesOnBlowUp},
we expand
$$
\langle K_\varphi,h\rangle^i
=
\left\langle (K+(-1)^{\pi_1(\varphi)}e_1^*)+\sum_{u=2}^n (-1)^{\pi_u(\varphi)}e_u^*,h\right\rangle^i,
$$
and thus
\begin{multline}
\label{eq:ExpandKphi}
\langle K_\varphi,h\rangle^i
\\
=
\sum_{i_1+\cdots+i_n=i}
\binom{i}{i_1,\ \dots , i_n}
(-1)^{\sum_{u=2}^n \pi_u(\varphi)i_u}
\left\langle K+(-1)^{\pi_1(\varphi)}e_1^*,h\right\rangle^{i_1}
\prod_{u=2}^n\langle e_u^*,h\rangle^{i_u}.
\end{multline}
Next, we compute the orientation factors for $\varphi_0:=(0,0,\dots,0)\in (\ZZ/2\ZZ)^n$ to give
\begin{align*}
\eps(\tilde w,\varphi)
{}&\equiv
\frac{1}{2}\left( \tilde w^2 +\tilde w\cdot K_\varphi\right) \pmod 2
\quad\hbox{(by \eqref{eq:DefineOrientationEps})}
\\
{}&\equiv
\frac{1}{2}\left( \tilde w^2 +\tilde w\cdot K_0\right) +\frac{1}{2}(K_\varphi-K_0)\cdot\tilde w \pmod 2
\\
{}&\equiv
\eps(\tilde w,\varphi_0) +\frac{1}{2}(K_\varphi-K_0)\cdot\tilde w \pmod 2
\quad\hbox{(by \eqref{eq:DefineOrientationEps} and \eqref{eq:BasicClassesOnBlowUp})}
\\
{}&\equiv
\eps(\tilde w,\varphi_0)+\sum_{u=1}^n
\frac{1}{2} \left( (-1)^{\pi_u(\varphi)}-1\right)w_u e_u^*\cdot e_u^* \pmod 2
\quad\hbox{(by \eqref{eq:BasicClassesOnBlowUp}),}
\end{align*}
and thus, by \eqref{eq:MultZ2andZ},
\begin{equation}
\label{eq:Orientphi}
\eps(\tilde w,\varphi)
=
\eps(\tilde w,\varphi_0) + w_1\pi_1(\varphi) + \sum_{u=2}^n w_u\pi_u(\varphi) \pmod 2.
\end{equation}
Equations \eqref{eq:ExpandKphi} and \eqref{eq:Orientphi}  imply that
\begin{equation}
\label{eq:BlownUpTerms2}
\begin{aligned}
{}&
(-1)^{\eps(\tilde w,\varphi)}
\langle K_\varphi,h\rangle^i
\\
{}&\quad=
(-1)^{\eps(\tilde w,\varphi_0)+w_1\pi_1(\varphi)}
\sum_{i_1+\cdots+i_n=i}
\binom{i}{i_1,\ \dots , i_n}
(-1)^{\sum_{u=2}^n (i_u+w_u)\pi_u(\varphi)}
\\
{}&\qquad\times
\left\langle K+(-1)^{\pi_1(\varphi)}e_1^*,h\right\rangle^{i_1}
\prod_{u=2}^n\langle e_u^*,h\rangle^{i_u}.
\end{aligned}
\end{equation}
We now split the sum
in the right-hand-side of \eqref{eq:CoeffDetLHS1}
over $(\ZZ/2\ZZ)^n$ into sums over $\pi_1^{-1}(0)$
and $\pi_1^{-1}(1)$:
\begin{equation}
\label{eq:CoeffDetLHS2}
\begin{aligned}
{}&
D^{\tilde w}_{X_q(n)}(h^{\delta-2m}x^m)
\\
{}&\quad
=
\sum_{\begin{subarray}{l}i+2k\\=\delta-2m\end{subarray}}
\frac{\SW_{X_q}'(K)(\delta-2m)!}{2^{k+n-m} k!i!}
Q_{X_q(n)}(h)^k
\\
{}&\qquad\times
\left(
\sum_{\varphi\in \pi_1^{-1}(0) }
(-1)^{{\eps(\tilde w,\varphi)}}
\langle K_\varphi,h\rangle^i
+
\sum_{\varphi\in \pi_1^{-1}(1) }
(-1)^{{\eps(\tilde w,\varphi)}}
\langle K_\varphi,h\rangle^i
\right).
\end{aligned}
\end{equation}
Applying \eqref{eq:BlownUpTerms2} to \eqref{eq:CoeffDetLHS2} yields
\begin{equation}
\label{eq:CoeffDetLHS3}
\begin{aligned}
{}&
D^{\tilde w}_{X_q(n)}(h^{\delta-2m}x^m)
\\
{}&\quad
=
\sum_{\begin{subarray}{c}i_1+\cdots +i_n+2k\\=\delta-2m\end{subarray}}
\frac{\SW_{X_q}'(K) (\delta-2m)!}{2^{k+n-m} k!i_1!\cdots i_n!}
(-1)^{\eps(\tilde w,\varphi_0)}
\left(\prod_{u=2}^n\langle e_u^*,h\rangle^{i_u} \right)
Q_{X_q(n)}(h)^k
\\
{}&\qquad\times
\left(
\sum_{\varphi\in \pi_1^{-1}(0) }
(-1)^{\sum_{u=2}^n \pi_u(\varphi)(w_u+i_u)}
\langle K+ e_1^*,h\rangle^{i_1}
\right.
\\
{}&\qquad\quad
\left.
+(-1)^{w_1}
\sum_{\varphi\in \pi_1^{-1}(1) }
(-1)^{\sum_{u=2}^n \pi_u(\varphi)(w_u+i_u)}
\langle K- e_1^*,h\rangle^{i_1}
\right).
\end{aligned}
\end{equation}
Identifying $\pi_1^{-1}(0)$ and $\pi_1^{-1}(1)$
with $(\ZZ/2\ZZ)^{n-1}$ and applying Lemma \ref{lem:PermutationSumAsDifferenceOperator}
and the definition \eqref{eq:DefineSumFactor1} of $p^{\tilde w}(i_2,\dots,i_n)$
yields, for $a=0,1$,
$$
\sum_{\pi_1^{-1}(a)}
(-1)^{\sum_{u=2}^n \pi_u(\varphi)(w_u+i_u)}
=
p^{\tilde w}(i_2,\dots,i_n).
$$
Thus, we may rewrite \eqref{eq:CoeffDetLHS3} as
\begin{equation}
\label{eq:CoeffDetLHS4}
\begin{aligned}
{}&
\frac{(-1)^{\eps(\tilde w,\varphi_0)}}{\SW'_{X_q}(K)}
D^{\tilde w}_{X_q(n)}(h^{\delta-2m}x^m)
\\
{}&\quad
=
\sum_{\begin{subarray}{c}i_1+\cdots i_n+2k\\=\delta-2m\end{subarray}}
\frac{ (\delta-2m)!}{2^{k+n-m} k!i_1!\cdots i_n!}
p^{\tilde w}(i_2,\dots,i_n)
\left(\prod_{u=2}^n\langle e_u^*,h\rangle^{i_u} \right)
Q_{X_q(n)}(h)^k
\\
{}&\qquad\times
\left(
\langle K+ e_1^*,h\rangle^{i_1}
+
(-1)^{w_1}
\langle K- e_1^*,h\rangle^{i_1}
\right).
\end{aligned}
\end{equation}
We next split the sum over $(\ZZ/2\ZZ)^n$ on the right-hand-side of \eqref{eq:CoeffDetRHS1}
into sums over $\pi_1^{-1}(0)$ and $\pi_1^{-1}(1)$:
\begin{equation}
\label{eq:CoeffDetRHS2}
\begin{aligned}
D^{\tilde w}_{X_q(n)}(h^{\delta-2m}x^m)
&=
\sum_{\begin{subarray}{l}i+j+2k\\=\delta-2m\end{subarray}}
SW'_{X_q}(K)
\langle \tilde\La,h\rangle^j
Q_{X_q(n)}(h)^k
\\
{}&\qquad
\times\left(
\sum_{\varphi\in \pi_1^{-1}(0) }
(-1)^{{\eps(\tilde w,\varphi)}}
\tilde b_{i,j,k}( K_\varphi\cdot\tilde\La)
\langle K_\varphi,h\rangle^i
\right.
\\
{}&\qquad\quad
\left.
+
\sum_{\varphi\in \pi_1^{-1}(1) }
(-1)^{{\eps(\tilde w,\varphi)}}
\tilde b_{i,j,k}( K_\varphi\cdot\tilde\La)
\langle K_\varphi,h\rangle^i
\right).
\end{aligned}
\end{equation}
We rewrite the argument $K_\varphi\cdot\La$ in the coefficient $\tilde b_{i,j,k}$,
\begin{align*}
K_\varphi\cdot\tilde\La
{}&=
K_0\cdot\tilde\La+(K_\varphi-K_0)\cdot\tilde\La
\\
{}&=
K_0\cdot\tilde\La +\sum_{u=1}^n ((-1)^{\pi_u(\varphi)}-1)\la_u (e_u^*\cdot e_u^*),
\end{align*}
and thus by \eqref{eq:MultZ2andZ},
\begin{equation}
\label{eq:VariationOfKLa}
K_\varphi\cdot\tilde\La
=
K_0\cdot\tilde\La+2\pi_1(\varphi)\la_1 + 2\sum_{u=2}^n\pi_u(\varphi)\la_u.
\end{equation}
Substituting \eqref{eq:BlownUpTerms2} and \eqref{eq:VariationOfKLa} into \eqref{eq:CoeffDetRHS2},
together with the definitions \eqref{eq:DefineOrientationEps} of $\eps(\tilde w,K_\varphi) \equiv \eps(\tilde w,\varphi)$ and \eqref{eq:BasicClassesOnBlowUp} of $K_0$,
yields
\begin{equation}
\label{eq:CoeffDetRHS3}
\begin{aligned}  
{}&\frac{(-1)^{\eps(\tilde w,\varphi_0)}}{\SW'_{X_q}(K)}
D^{\tilde w}_{X_q(n)}(h^{\delta-2m}x^m)
\\
&\quad =
\sum_{\begin{subarray}{c}i_1+\cdots +i_n+j+2k\\=\delta-2m\end{subarray}}
\binom{i_1+\cdots+i_n}{i_1,\ \dots , i_n}
\left(\prod_{u=2}^n \langle e_u^*,h\rangle^{i_u}\right)
\langle \tilde\La,h\rangle^j
Q_{X_q(n)}(h)^k
\\
&\qquad \times
\left(
\sum_{\varphi\in \pi_1^{-1}(0) }
(-1)^{\theta_\varphi}
\tilde b_{i,j,k}\left( K_0\cdot\tilde\La  + 2\sum_{u=1}^n \pi_u(\varphi)\la_u \right)
\langle K+e_1^*,h\rangle^{i_1}
\right.
\\
&\qquad 
\left.
+ (-1)^{w_1}\sum_{\varphi\in \pi_1^{-1}(1) }
(-1)^{\theta_\varphi}
\tilde b_{i,j,k}\left( K_0\cdot\tilde\La +2\la_1 + 2\sum_{u=1}^n \pi_u(\varphi)\la_u \right)
\langle K-e_1^*,h\rangle^{i_1}
\right),
\end{aligned}
\end{equation}
where we write $\theta_\varphi$ above for
$$
\theta_\varphi := \sum_{u=2}^n \pi_u(\varphi)(w_u+i_u).
$$
By Lemma \ref{lem:PermutationSumAsDifferenceOperator},
\begin{align*}
{}&
\sum_{\varphi\in (\ZZ/2\ZZ)^{n-1}}
(-1)^{\sum_{u=2}^n \pi_u(\varphi)(w_u+i_u)}
\tilde b_{i,j,k}\left(K_0\cdot\tilde\La +2\pi_1(\phi)\la_1 + 2\sum_{u=1}^n \pi_u(\varphi)\la_u\right)
\\
{}&\quad=
\nabla^{i_2+w_2}_{2\la_2}\cdots \nabla^{i_n+w_n}_{2\la_n} \tilde b_{i,j,k}\left(K_0\cdot\tilde\La+2\pi_1(\phi)\la_1\right).
\end{align*}
Substituting the preceding equality into \eqref{eq:CoeffDetRHS3} yields
\begin{equation}
\label{eq:CoeffDetRHS4}
\begin{aligned}
{}&
\frac{(-1)^{\eps(\tilde w,\varphi_0)}}{\SW'_{X_q}(K)}
D^{\tilde w}_{X_q(n)}(h^{\delta-2m}x^m)
\\
{}&\quad
=
\sum_{\begin{subarray}{c}i_1+\cdots i_n+j+2k\\=\delta-2m\end{subarray}}
\binom{i_1+\cdots+i_n}{i_1,\ \dots , i_n}
\left(\prod_{u=2}^n \langle e_u^*,h\rangle^{i_u}\right)
\langle \tilde\La,h\rangle^j
Q_{X_q(n)}(h)^k
\\
{}&\qquad\times
\bigg(
\nabla^{i_2+w_2}_{2\la_2}\cdots \nabla^{i_n+w_n}_{2\la_n} \tilde b_{i,j,k}(K_0\cdot\tilde\La)
\langle K+e_1^*,h\rangle^{i_1}
\\
{}&\qquad\quad+
(-1)^{w_1}
\nabla^{i_2+w_2}_{2\la_2}\cdots \nabla^{i_n+w_n}_{2\la_n} \tilde b_{i,j,k}(K_0\cdot\tilde\La+2\la_1)
\langle K-e_1^*,h\rangle^{i_1}
\bigg).
\end{aligned}
\end{equation}
Comparing
equations \eqref{eq:CoeffDetLHS4} and \eqref{eq:CoeffDetRHS4}
gives the desired equality \eqref{eq:BlownUpUsefulCobordismFormula}.
\end{proof}

We now review a result giving the coefficients $\tilde b_{i,j,k}$ for $i\ge c(X)-3$.

\begin{prop}
\label{prop:HighDegreeCoefficients}
\cite[Proposition 4.8]{FL6}
Let $n>0$ and $q\ge 2$ be integers.
If $x,y$ are integers and $i,j,k,m$ are non-negative integers
satisfying, for $A:=i+j+2k+2m$,
\begin{subequations}
\begin{align}
\label{eq:HighDegreeCondition1}
i& \ge n,
\\
\label{eq:HighDegreeCondition2}
y& > A-4q-3-n,
\\
\label{eq:HighDegreeCondition3}
A & \ge 2m,
\\
\label{eq:HighDegreeCondition4}
x& \equiv y\equiv 0\pmod 2,
\end{align}
\end{subequations}
then the coefficients
$\tilde b_{i,j,k}(\chi_h,c_1^2,\La\cdot K,\La^2,m)$
defined in \eqref{eq:OrientedCoeff} are given by
$$
\tilde b_{i,j,k}(q,q -3-n,x,y,m)
=
\begin{cases}
\displaystyle\frac{(A-2m)!}{k!i!}2^{m-k-n}  & \text{if $j=0$,}
\\
0 & \text{if $j>0$.}
\end{cases}
$$
\end{prop}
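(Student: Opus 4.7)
The proposition is cited verbatim from \cite[Proposition 4.8]{FL6}, with only the cosmetic change from $b_{i,j,k}$ to $\tilde b_{i,j,k}$ via the orientation sign \eqref{eq:OrientedCoeff}. My plan is therefore to outline the strategy of the original proof, which proceeds by matching coefficients in two competing expressions for the Donaldson invariants of the model manifolds $X_q(n)$ constructed in Section \ref{subsec:ExManifoldsBlowUp}.

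On these manifolds Witten's Conjecture \ref{conj:WC} holds, by Theorem \ref{thm:WCBlowDownInvariance} applied to the $X_q$ of \cite{FSParkSympOneBasic}. Consequently, Lemma \ref{lem:ReduceDFormToB'Sum} supplies one formula for $D^{\tilde w}_{X_q(n)}(h^{\delta-2m}x^m)$ in terms of Seiberg-Witten invariants, while the $\SO(3)$-monopole cobordism formula of Theorem \ref{thm:Cobordism}, translated into the blown-up setting as in Lemma \ref{lem:BlowUpCobordism}, supplies a second formula involving the unknown coefficients $\tilde b_{i,j,k}$. I would equate these two expressions and apply Lemma \ref{lem:AlgCoeff} --- whose hypotheses follow from the hyperbolic summand properties \eqref{eq:HyperbolicSummand1}--\eqref{eq:HyperbolicSummand3}, ensuring that $\{K\pm e_1^*, e_2^*,\ldots,e_n^*,\tilde\Lambda\}$ is linearly independent in $H^2(X_q(n);\RR)$ and that $Q_{X_q(n)}$ remains non-zero on their common kernel --- to match coefficients of the monomials of the form \eqref{eq:BlowUpTerm}.

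The hypothesis $i \ge n$ is what singles out the \emph{highest}-order terms: writing $i_1+\cdots+i_n = i$ on the left-hand side of \eqref{eq:BlownUpUsefulCobordismFormula}, the factor $p^{\tilde w}(i_2,\ldots,i_n)$ in \eqref{eq:DefineSumFactor1} forces the parity constraint $i_u + w_u \equiv 0\pmod 2$ for $u=2,\ldots,n$, and by \eqref{eq:ConstantDifference} the iterated difference operators $\nabla^{i_u+w_u}_{2\lambda_u}$ acting on $\tilde b_{i,j,k}(K_0\cdot\tilde\Lambda)$ collapse to multiplication by $2^{n-1}$ precisely when $\tilde b_{i,j,k}$ is independent of $K_0\cdot\tilde\Lambda$. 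Since the Witten-side formula \eqref{eq:DInvarForWCB'Sum} contains no $\langle\tilde\Lambda,h\rangle^j$ factor with $j>0$, the algebraic independence of the monomials \eqref{eq:BlowUpTerm} forces $\tilde b_{i,j,k} = 0$ for $j>0$; the case $j=0$ then reads off the explicit value $(A-2m)!/(k!\,i!)\cdot 2^{m-k-n}$ by direct comparison of the Witten-side coefficient with the cobordism-side one.

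The main obstacle will be the combinatorial bookkeeping: tracking factorial prefactors, orientation signs $(-1)^{w_1}$, and carefully separating the $\pi_1^{-1}(0)$ and $\pi_1^{-1}(1)$ contributions in \eqref{eq:BlownUpUsefulCobordismFormula}. One also has to confirm that for each admissible pair $(x,y)$ satisfying \eqref{eq:HighDegreeCondition1}--\eqref{eq:HighDegreeCondition4}, one can indeed choose $q$, $n$, $\Lambda$, $w$, $w_u$, $\lambda_u$ so that the hypotheses of Lemma \ref{lem:BlowUpCobordism} are met and so that $K_0\cdot\tilde\Lambda = x$ and $\tilde\Lambda^2 = y$; this is where the explicit form of the $X_q$ and the parity hypothesis \eqref{eq:HighDegreeCondition4} enter. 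Once those inputs are in place, the identification of the coefficients becomes a routine matter of matching factorials and powers of $2$ on both sides.
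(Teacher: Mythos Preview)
Your proposal is appropriate: the paper does not prove this proposition at all, but cites it directly from \cite[Proposition 4.8]{FL6}, adding only the Remark that the change from $b_{i,j,k}$ to $\tilde b_{i,j,k}$ is the sign factor in \eqref{eq:OrientedCoeff}. You correctly identify this at the outset, and your sketch of the argument from \cite{FL6} --- matching the Witten-side expression \eqref{eq:DInvarForWCB'Sum} against the cobordism-side expression on the model manifolds $X_q(n)$ via Lemma \ref{lem:AlgCoeff} --- is the right strategy and goes beyond what the present paper supplies.
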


\begin{rmk}
The expression for the coefficients $\tilde b_{i,j,k}$ given in Proposition \ref{prop:HighDegreeCoefficients} differs
from that given for the coefficients $b_{i,j,k}$ in \cite[Proposition 4.8]{FL6}
exactly by the factor of $(-1)$ appearing in the definition  \eqref{eq:OrientedCoeff}.
\end{rmk}

Because of
the condition \eqref{eq:HighDegreeCondition1}, Proposition
\ref{prop:HighDegreeCoefficients}  only determines the coefficients
$\tilde b_{i,j,k}$ with $i\ge c(X)-3$.
We next derive a difference equation satisfied by
the coefficients $\tilde b_{i,j,k}$ with
$1\le i<c(X)-3$.

\begin{prop}
\label{prop:LowICoefficientDifferenceRelation}
Let $n>1$ and $q\ge 2$ be integers.
If $x,y$ are integers and $p,j,k,m$ are non-negative integers
satisfying, for $A:=p+j+2k+2m$,
\begin{subequations}
\begin{align}
\label{eq:DiffRelatCond0}
1&\le p\le n-1,
\\
\label{eq:DiffRelatCond1}
y&>A-4q-n-3,
\\
\label{eq:DiffRelatCond2}
y&\equiv A - (n+3)\pmod 4,
\\
\label{eq:DiffRelatCond3}
x-y&\equiv 0\pmod 2,
\end{align}
\end{subequations}
and we abbreviate
$$
\tilde b_{p,j,k}(x) = \tilde b_{p,j,k}(q,q-n-3,x,y,m),
$$
then
\begin{equation}
\label{eq:LowICoefficientDifferenceRelation}
\left(
\nabla^1_4
\right)^{n-p}\tilde  b_{p,j,k}(x)
= 0.
\end{equation}
\end{prop}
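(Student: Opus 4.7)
The plan is to apply Lemma \ref{lem:BlowUpCobordism} to the manifolds $X_q(n)$, which satisfy Witten's Conjecture \ref{conj:WC} by Theorem \ref{thm:WCBlowDownInvariance}, and extract the difference equation by comparing coefficients via Lemma \ref{lem:AlgCoeff}. For the given indices $p,j,k,m$ and integers $x,y$ satisfying \eqref{eq:DiffRelatCond0}--\eqref{eq:DiffRelatCond3}, I would first realize the data on $X_q(n)$: set $\delta := p + j + 2k + 2m$ and choose $\La, w \in H^2(X_q;\ZZ)$ together with integers $w_u,\la_u$ with $w_u + \la_u \equiv 1 \pmod 2$ so that the hypotheses of Lemma \ref{lem:BlowUpCobordism} are satisfied and $\tilde\La^2 = y$, $K_0 \cdot \tilde\La = x$. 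The crucial choice is $\la_u = 2$ for all $u = 2,\ldots,n$, which makes every difference operator $\nabla^{i_u + w_u}_{2\la_u}$ appearing on the right-hand side of \eqref{eq:BlownUpUsefulCobordismFormula} of step $4$, so each becomes either $\nabla^1_4$ or $\nabla^0_4$ according to the parity of $i_u + w_u$. The parity conditions \eqref{eq:DiffRelatCond2}--\eqref{eq:DiffRelatCond3} are checked to be compatible with \eqref{eq:BlowUpFormulaCondition1}--\eqref{eq:BlowUpFormulaCondition2} using $\chi_h(X_q(n)) = q$ and $c_1^2(X_q(n)) = q - n - 3$.

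Next I would apply Lemma \ref{lem:AlgCoeff} with the linearly independent functionals $\{K + e_1^*,\, K - e_1^*,\, e_2^*,\ldots,e_n^*,\, \La\}$ on $H_2(X_q(n);\RR)$ together with the quadratic form $Q_{X_q(n)}$, whose non-degeneracy on the common kernel follows from \eqref{eq:HyperbolicSummand3}. This converts the polynomial identity \eqref{eq:BlownUpUsefulCobordismFormula} into scalar identities on the coefficients $\tilde b_{i,j,k}$. The key step is then to compare the coefficient of the monomial
\[
M := \langle K + e_1^*, h\rangle^p \prod_{u \in T} \langle e_u^*, h\rangle \, \langle \La, h\rangle^j \, Q_{X_q(n)}(h)^k
\]
for a subset $T \subseteq \{2,\ldots,n\}$ of size $n - p$. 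Because the left-hand side of \eqref{eq:BlownUpUsefulCobordismFormula} contains no $\langle \La, h\rangle$-factor, its contribution to $M$ vanishes whenever $j > 0$ and is explicitly given by the factorial prefactor when $j = 0$.

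The argument would proceed by induction on $n - p$, with base case $p = n - 1$. For the base case, only a single step-$4$ difference operator appears in the relevant sum; choosing $w_u$ so that $i_u + w_u \equiv 1 \pmod 2$ for $u \in T$ turns this operator into $\nabla^1_4$ acting on $\tilde b_{n-1,j,k}$, while the remaining RHS terms involve $\tilde b_{i,j'',k}$ with $i \ge n$, determined by Proposition \ref{prop:HighDegreeCoefficients}. Matching with the LHS then yields $\nabla^1_4 \tilde b_{n-1,j,k}(x) = 0$. For the inductive step, I would expand $\langle \tilde \La, h\rangle^{j'} = (\langle \La, h\rangle + \sum_u \la_u \langle e_u^*, h\rangle)^{j'}$ via the multinomial theorem, decomposing the RHS contribution to $M$ into a sum indexed by subsets $T' \subseteq T$ specifying which $\langle e_u^*, h\rangle$-factors are absorbed from the $\tilde \La$-expansion. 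The term $T' = \emptyset$ collects, via Lemma \ref{lem:PermutationSumAsDifferenceOperator}, into $(\nabla^1_4)^{n-p} \tilde b_{p,j,k}(x)$, while the terms with $T' \neq \emptyset$ involve $\tilde b_{p',j'',k}$ with $p' > p$ and are controlled by the inductive hypothesis $(\nabla^1_4)^{n - p'} \tilde b_{p',j'',k} = 0$ (for $p' < n$) together with Proposition \ref{prop:HighDegreeCoefficients} (for $p' \ge n$).

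The main obstacle will be the combinatorial bookkeeping in the inductive step: the multinomial expansion of $\langle \tilde \La, h\rangle^{j'}$ produces many cross-terms with differing triples $(i, j', k)$ all contributing to the same target monomial $M$, and one must verify that the parity-weighted multinomial sums exactly reproduce the shape of the inductive hypothesis applied to each $\tilde b_{p',j'',k}$ with $p' > p$. In particular, the sign $(-1)^{w_1}$ coupling the $\langle K + e_1^*, h\rangle^p$ and $\langle K - e_1^*, h\rangle^p$ terms, and the propagation of the step-$4$ shifts through the operators $\nabla^{i_u+w_u}_4$, must be reconciled with the iterated application of Lemma \ref{lem:PermutationSumAsDifferenceOperator} so that the residual terms in the scalar identity collapse to the desired equation $(\nabla^1_4)^{n-p} \tilde b_{p,j,k}(x) = 0$.
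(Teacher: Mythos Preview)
Your overall strategy matches the paper's: apply Lemma~\ref{lem:BlowUpCobordism} to $X_q(n)$, verify algebraic independence of $\{K\pm e_1^*,e_2^*,\dots,e_n^*,\tilde\La,Q_{X_q(n)}\}$ via Lemma~\ref{lem:AlgCoeff}, and extract a scalar identity by reading off the coefficient of a well-chosen monomial. The difference lies entirely in the choice of $\la_u$ and of monomial, and here your proposal contains a genuine gap.

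The paper does \emph{not} take $\la_u=2$ for all $u\ge 2$; it sets $\la_u=0$ for $1<u\le p$ and $\la_u=2$ for $p<u\le n$, and then compares the coefficient of the monomial with $i_1=i_2=\cdots=i_p=1$ and $i_{p+1}=\cdots=i_n=0$ (so $i_1=1$, not $i_1=p$). With these choices, for $2\le u\le p$ one has $\la_u=0$, $w_u\equiv 1$, $i_u=1$, hence $\nabla^{i_u+w_u}_{2\la_u}=\nabla^0_0=2I$, while for $p<u\le n$ one has $\la_u=2$, $w_u\equiv 1$, $i_u=0$, hence $\nabla^{i_u+w_u}_{2\la_u}=\nabla^1_4$. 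The right-hand side therefore becomes exactly $p!\,2^{p-1}(\nabla^1_4)^{n-p}\tilde b_{p,j,k}(x_0)$, and the left-hand side vanishes because $w_n+i_n\equiv 1$ forces $p^{\tilde w}=0$. No induction is needed.

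Your uniform choice $\la_u=2$ cannot produce $(\nabla^1_4)^{n-p}$ alone. In \eqref{eq:BlownUpUsefulCobordismFormula} there are always $n-1$ difference operators $\nabla^{i_u+w_u}_{4}$ for $u=2,\dots,n$; with $w_u\equiv 1$ each is either $\nabla^1_4$ or $\nabla^0_4$ according to the parity of $i_u$. To isolate $\tilde b_{p,j,k}$ you need $i_1+\cdots+i_n=p$, and whatever distribution of the $i_u$ you take, the contribution picks up $p-1$ copies of $\nabla^0_4$ (or more $\nabla^1_4$'s than $n-p$, but then the index $j$ is wrong). Since $\nabla^0_4 f(x)=f(x)+f(x+4)$ is not invertible, the relation $(\nabla^0_4)^{p-1}(\nabla^1_4)^{n-p}\tilde b_{p,j,k}=0$ does not imply $(\nabla^1_4)^{n-p}\tilde b_{p,j,k}=0$. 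Your claim that the ``$T'=\emptyset$'' term collects into $(\nabla^1_4)^{n-p}\tilde b_{p,j,k}(x)$ is therefore not correct as stated, and the proposed induction does not close this gap. A secondary complication is your use of $\La$ rather than $\tilde\La$ in the algebraically independent set: since $\la_1\ne 0$, expanding $\langle\tilde\La,h\rangle^{j'}$ introduces $\langle e_1^*,h\rangle$-terms which mix with $K\pm e_1^*$, further entangling the bookkeeping. The fix is precisely the paper's: set $\la_u=0$ for $1<u\le p$ so that those $p-1$ operators degenerate to scalars, and work with $\tilde\La$ in the independent set so no multinomial expansion is needed.
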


\begin{proof}
Let $X_q(n)$ be the
manifold defined in \eqref{eq:BlownUpExampleNotation}.
By \eqref{eq:BlownUpExampleChar}, we have $\chi_h(X_q(n))=q$ and $c_1^2(X_q(n))=q-n-3$.
We will  apply Lemma \ref{lem:AlgCoeff} to equation
\eqref{eq:BlownUpUsefulCobordismFormula} for the manifold $X_q(n)$.
Let $f_1,f_2\in H^2(X_q;\ZZ)\subset H^2(X_q(n);\ZZ)$
and $K\in B(X_q)$ be the cohomology classes
appearing in the properties of $X_q$ listed
at the beginning of Section \ref{subsec:ExManifoldsBlowUp},
satisfying $f_i\cdot K=0$ and $f_i^2=0$ for $i=1,2$ and $f_1\cdot f_2=1$.
For $y_0:=\thalf\left( y+(x+2(n-p))^2+4(n-p)\right)$, define
\begin{equation}
\label{eq:Expression_tilde_Lambda_as_Lambda_plus_sum_lambdau_eu*}
\tilde\La=\La+\sum_{u=1}^n \la_ue_u^*,
\end{equation}
where we define $\La:=y_0f_1+f_2\in H^2(X_q;\ZZ)$ and the non-negative integers $\lambda_u$ are given by
\begin{equation}
\label{eq:lauInTildeLa}
\la_u
=
\begin{cases}
-(x+2(n-p)) & \text{if $u=1$},
\\
0 & \text{if $1<u\le p$},
\\
2 & \text{if $p+1\le u\le n$}.
\end{cases}
\end{equation}
The assumption \eqref{eq:DiffRelatCond3} that $y\equiv x\pmod 2$
implies that
$y_0$
is an integer.
Thus, for $K_0$ as in
\eqref{eq:BasicClassesOnBlowUp}, we see that
\begin{equation}
\label{eq:La2InDiff}
\tilde\La^2=y
\quad\hbox{and}\quad
\tilde\La\cdot K_0=x.
\end{equation}
Define $\tilde w:=\tilde\La-K_0$,
where $K_0$ is as in \eqref{eq:BasicClassesOnBlowUp}.
We claim that $\tilde w,\tilde\La$
and $\delta:=A$ satisfy the hypotheses
of Lemma \ref{lem:BlowUpCobordism}.
We have
\begin{equation}
\label{eq:tilde_Lambda_minus_tilde_w_is_w2(Xq(n))_mod_2}
\tilde\La-\tilde w=K_0\equiv w_2(X_q(n))\pmod 2
\end{equation}
by construction
and $\delta\ge 2m$ by definition.
%
%
By \eqref{eq:Expression_tilde_Lambda_as_Lambda_plus_sum_lambdau_eu*}
\begin{align*}
\La^2 {}&= \tilde\La^2+\sum_{u=1}^n\la_u^2
\\
{}&=y +\sum_{u=1}^n\la_u^2
\quad\text{(by  \eqref{eq:La2InDiff})}
\\
{}&> \delta-(n+3)-4q  +\sum_{u=1}^n\la_u^2
\quad\text{(by \eqref{eq:DiffRelatCond1} and $\delta=A$)},
\end{align*}
so the condition \eqref{eq:BlowUpFormulaCondition1} holds.

For $\tilde w=\tilde\La-K_0$ as above, we can write
\begin{equation}
\label{eq:Definition_tilde_w_as_w_plus_sum_wu_eu*}
\tilde w=w+\sum_{u=1}^n w_u e_u^*,
\end{equation}
where $w\in H^2(X_q;\ZZ)$.
To verify that the hypothesis
\eqref{eq:BlowUpFormulaCondition2}
in Lemma \ref{lem:BlowUpCobordism} holds, we compute
\begin{align*}
\tilde w^2
{}&= \tilde\La^2  -2K_0\tilde \La +K_0^2
\\
{}&\equiv K_0^2 -\tilde\La^2\pmod 4
\quad\text{(as $-2K_0\tilde\La\equiv -2\tilde\La^2\pmod 4$, since $K_0$ characteristic)}
\\
{}&\equiv (q-n-3) -\tilde\La^2\pmod 4
\quad\text{(by \eqref{eq:BlownUpExampleChar} and \eqref{eq:SWSimpleTypeOnBlowUp})}
\\
{}&\equiv  (q-n-3)-\delta+(n+3)\pmod 4
\quad\text{(by \eqref{eq:DiffRelatCond2} and $\delta=A$)}
\\
{}&\equiv -\delta-3q\pmod 4.
\end{align*}
Combining the preceding equality with $w^2=\tilde w^2+\sum_{u=1}^n w_u^2$
yields $w^2\equiv -\delta-3q+\sum_{u=1}^n w_u^2\pmod 4$ and so
condition \eqref{eq:BlowUpFormulaCondition2} holds. Hence, we can
apply Lemma \ref{lem:BlowUpCobordism} with the given values for
$\tilde\La$, $\tilde w$, $\delta$, and $m$ to the coefficients $\tilde b_{p,j,k}(x)$.

Next, we claim that the set $\{K+e_1^*,K-e_1^*,e_2^*,\dots,e_n^*,\tilde \La,Q_{X_q(n)}\}$
is algebraically independent in the sense of Lemma \ref{lem:AlgCoeff}.
To see that $K+e_1^*,K-e_1^*,e_2^*,\dots,e_n^*,\tilde \La$ are linearly independent,
assume there is a linear combination with $a,b,c,d_2,\dots,d_n\in\RR$,
\begin{equation}
\label{eq:LinComb}
a(K+e_1^*)+b(K-e_1^*)+c\tilde\La +\sum_{u=2}^n d_u e_u^*=0\in H^2(X_q(n);\RR).
\end{equation}
Because there is a  direct sum decomposition,
$$
H^2(X_q(n);\RR)\cong H^2(X_q;\RR)\oplus \bigoplus_{u=1}^n\RR e_u^*,
$$
the equality \eqref{eq:LinComb} gives
\begin{subequations}
\begin{align}
\label{eq:LinIndepDirectSum1}
(a+b)K+c\La&=0\in H^2(X_q;\RR),
\\
\label{eq:LinIndepDirectSum2}
(a-b)e_1^*+c(\tilde\La-\La)+\sum_{u=2}^n d_u e_u^*&=0\in \bigoplus_{u=1}^n\RR e_u^*.
\end{align}
\end{subequations}
By \eqref{eq:HyperbolicSummand2}, the classes
$K$ and $\La=y_0f_1+f_2$ in $H^2(X_q;\RR)$
are linearly
independent because $K,f_1,f_2$ are linearly
independent in $H^2(X_q;\RR)$.  Thus,
\eqref{eq:LinIndepDirectSum1} implies that
$a+b=0$ and $c=0$.  Equation \eqref{eq:LinComb} then reduces to
$$
a(K+e_1^*)-a(K-e_1^*)+\sum_{u=2}^n d_u e_u^*
=
2a e_1^*+\sum_{u=2}^n d_u e_u^*=0.
$$
By the linear independence of $e_1^*,\dots,e_n^*$, we have $a=-b=0$ and $d_1=\cdots=d_n=0$, proving
the linear independence of $K+e_1^*,K-e_1^*,e_2^*,\dots,e_n^*,\tilde \La$.
Next, we observe that  the intersection of kernels,
$$
\bK_1:=\Ker (K+e_1^*)\cap \Ker (K-e_1^*)
\cap \Ker \tilde\La
\cap \bigcap_{u=2}^n \Ker e_u^*
\subset H_2(X_q(n);\RR),
$$
contains the intersection of kernels
$$
\bK_2:=\Ker K\cap \Ker f_1\cap \Ker f_2\subset H_2(X_q;\RR).
$$
Because the restriction of $Q_{X_q(n)}$ to $\bK_2$ equals the restriction
of $Q_{X_q}$ to $\bK_2$ and the restriction of $Q_{X_q}$ to $\bK_2$
is non-zero by \eqref{eq:HyperbolicSummand3}, the restriction of $Q_{X_q}$ to $\bK_1$
is also non-zero.  Thus, Lemma \ref{lem:AlgCoeff} implies that
the set $\{K+e_1^*,K-e_1^*,e_2^*,\dots,e_n^*,\tilde \La,Q_{X_q(n)}\}$
is algebraically independent.

This algebraic independence  and
Lemma \ref{lem:AlgCoeff} imply that
the coefficients of the term
\begin{equation}
\label{eq:ComparisonTerm}
\langle K+e_1^*,h\rangle^{i_1} \prod_{u=2}^n\langle e_u^*,h\rangle^{i_u}
\langle \La,h\rangle^j Q_{\widetilde X(n)}(h)^k
\end{equation}
on the left and right-hand sides of the identity \eqref{eq:BlownUpUsefulCobordismFormula} in Lemma \ref{lem:BlowUpCobordism} will be equal.  In particular, we consider the term \eqref{eq:ComparisonTerm} where
\begin{equation}
\label{eq:Values_iu_for_2_leq_n}
i_1=\cdots=i_p=1,\quad i_{p+1}=\cdots=i_{n}=0.
\end{equation}
The coefficient of this term on the left-hand-side of \eqref{eq:BlownUpUsefulCobordismFormula}
is given by a multiple of the
expression $p^{\tilde w}(i_2,\dots,i_n)$ defined in \eqref{eq:DefineSumFactor1}.
By the definition $\tilde w=\tilde\La-K_0$, the definition \eqref{eq:lauInTildeLa} that $\la_n=2$,
and the assumption \eqref{eq:Values_iu_for_2_leq_n} that $i_n=0$, 
we see that $w_n+i_n\equiv 1\pmod 2$, so $p^{\tilde w}(i_2,\dots,i_n)=0$ and the
coefficient of this term on the left-hand-side of \eqref{eq:BlownUpUsefulCobordismFormula} vanishes.
By the definition $\tilde w=\tilde\La-K_0$, the definition \eqref{eq:lauInTildeLa} of $\la_u$,
and the assumption \eqref{eq:Values_iu_for_2_leq_n} on the values of $i_u$, we see that
$w_u+i_u\equiv 0\pmod 2$ for $u=2,\dots,p$ and $w_u+i_u\equiv 1\pmod 2$ for $u=p+1,\dots,n$.
Hence, the coefficient of the term \eqref{eq:ComparisonTerm} satisfying
\eqref{eq:Values_iu_for_2_leq_n} on the right-hand-side
of \eqref{eq:BlownUpUsefulCobordismFormula} is
\begin{equation}
  \label{eq:LowOrderRHSTermCoeff}
p!\left(\nabla^0_0\right)^{p-1}
\left(\nabla^1_4\right)^{n-p}
 \tilde b_{p,j,k}(x)
 =
p! 2^{p-1}\left(\nabla^1_4\right)^{n-p}
 \tilde b_{p,j,k}(x).
\end{equation}
Because
Lemma \ref{lem:AlgCoeff}
implies that the coefficients of the term \eqref{eq:ComparisonTerm} on
the left and right-hand sides of \eqref{eq:BlownUpUsefulCobordismFormula}
are equal, the expression given by the right-hand-side of \eqref{eq:LowOrderRHSTermCoeff} must also vanish,
giving the desired result.
\end{proof}

\begin{rmk}
We required $p\ge 1$ in Proposition \ref{prop:LowICoefficientDifferenceRelation} because, in order to get information
about the coefficients $\tilde b_{0,j,k}$, we would have to consider the term
$$
\langle \La,h\rangle^j Q_{X_q(n)}^k
$$
in the equality \eqref{eq:BlownUpUsefulCobordismFormula}.
The coefficient of this term on the right-hand-side of \eqref{eq:BlownUpUsefulCobordismFormula}
is a multiple of
$$
\nabla^{w_1}_{2\la_1}\dots\nabla^{w_2}_{2\la_n}\tilde b_{0,j,k}(x_0),
$$
and so the argument of Proposition \ref{prop:LowICoefficientDifferenceRelation} would
show that $\tilde b_{0,j,k}$ also satisfies a difference equation of degree $n$.
However, the choice of $\la_1$ in \eqref{eq:lauInTildeLa} interacted with the possible
values of $x_0$, complicating the use of this result.
By Lemma \ref{lem:BlownUpCobordism}, we can avoid the need to pursue this argument.
\end{rmk}

Proposition \ref{prop:LowICoefficientDifferenceRelation} and the result for difference
equations given by Corollary \ref{cor:DegreeNPolyn} allow us to write the coefficients
$\tilde b_{i,j,k}$ as polynomials on $H_2(X;\RR)$.  We will combine this
fact with Lemma \ref{lem:SCSTVanishingSum} to show that, for manifolds of superconformal
simple type, the coefficients $\tilde b_{i,j,k}$ with $i\le c(X)-4$ do not contribute
to the expression for the Donaldson invariant in \eqref{eq:CompareCoeff2}.

\begin{cor}
\label{cor:PolynomialDependence}
Continue the assumptions of Proposition \ref{prop:LowICoefficientDifferenceRelation}.
In addition assume
\begin{enumerate}
\item
There is a class $K_1\in B(X)$ such that $\La\cdot K_1=0$;
\item
For all $K\in B(X)$, we have $\La\cdot K\equiv 0\pmod 4$.
\end{enumerate}
Then
for $1\le i\le n-1$, the function
$\tilde b_{i,j,k}$ is a  polynomial of degree $n-1-i$  in $\La\cdot K$ and thus
\begin{equation}
\label{eq:PolynDependenceCoeff}
\tilde b_{i,j,k}(q,q-n-3,K\cdot \La,\La^2,m)
=
\sum_{u=0}^{n-1-i}
\tilde b_{u,i,j,k}(q,q-n-3,\La^2,m)\langle K,h_\La\rangle^u,
\end{equation}
where $h_\La=\PD[\La]$ is the Poincar\'e dual of $\La$  and
if $u\equiv n+i\pmod 2$, then
\begin{equation}
\label{eq:ParityOfVanishingbuCoeff}
\tilde b_{u,i,j,k}(q,q-n-3,\La^2,m)=0.
\end{equation}
\end{cor}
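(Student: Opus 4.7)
The strategy is to combine Proposition~\ref{prop:LowICoefficientDifferenceRelation} with Corollary~\ref{cor:DegreeNPolyn} to extract polynomiality, and then use the sign-flip identity \eqref{eq:SignChangeOfTildeBCoeff} to eliminate the stated half of the coefficients.

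First I would verify that the pair $(x,y)=(K\cdot\La,\La^2)$ lies in the regime of Proposition~\ref{prop:LowICoefficientDifferenceRelation}, with $p:=i$. The inequality \eqref{eq:DiffRelatCond1} and the congruence \eqref{eq:DiffRelatCond2} are inherited from the continuing assumptions. The parity condition \eqref{eq:DiffRelatCond3} is where the new hypotheses enter: assumption (1), combined with the fact that any $K_0\in B(X)$ is characteristic, forces $\La^2\equiv K_0\cdot\La=0\pmod 2$, while assumption (2) gives $K\cdot\La\equiv 0\pmod 4$; together these yield $x-y\equiv 0\pmod 2$. Proposition~\ref{prop:LowICoefficientDifferenceRelation} then delivers
\[
(\nabla^1_4)^{n-i}\tilde b_{i,j,k}(x)=0
\]
for every $x\in 4\ZZ$.

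Next, since $\tilde b_{i,j,k}$ is (the restriction to integer arguments of) a real analytic function in $x$ by Theorem~\ref{thm:Cobordism}, I would apply Corollary~\ref{cor:DegreeNPolyn} with $\la=4$ and with ``$n$'' there taken to be $n-i$ to conclude that the map $x\mapsto\tilde b_{i,j,k}(q,q-n-3,x,\La^2,m)$ is a polynomial in $x$ of degree $n-1-i$. Using $\langle K,h_\La\rangle = K\cdot\La$ (since $h_\La=\PD[\La]$) gives the expansion \eqref{eq:PolynDependenceCoeff} with universal coefficients $\tilde b_{u,i,j,k}(q,q-n-3,\La^2,m)$.

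For the vanishing \eqref{eq:ParityOfVanishingbuCoeff}, the sign-flip identity \eqref{eq:SignChangeOfTildeBCoeff} specializes, under $c(X)=n+3$ (from $\chi_h=q$ and $c_1^2=q-n-3$) together with $K\cdot\La$ being even, to
\[
\tilde b_{i,j,k}(-x)=(-1)^{n+i+1}\tilde b_{i,j,k}(x).
\]
Substituting the polynomial expansion from the previous step and comparing coefficients of $x^u$ gives $\tilde b_{u,i,j,k}\bigl((-1)^u-(-1)^{n+i+1}\bigr)=0$, so $\tilde b_{u,i,j,k}=0$ whenever $u\equiv n+i\pmod 2$. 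The main obstacle I expect is the parity and $\pmod 4$ bookkeeping needed to activate Proposition~\ref{prop:LowICoefficientDifferenceRelation} along the \emph{entire} lattice $4\ZZ$, as Corollary~\ref{cor:DegreeNPolyn} demands, rather than only at the finitely many values $K\cdot\La$ coming from actual basic classes of $X$; this is precisely the role of assumption (1), which forces $\La^2$ to be even so that \eqref{eq:DiffRelatCond3} holds for \emph{every} $x\in 4\ZZ$.
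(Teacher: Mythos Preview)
Your proposal is correct and follows essentially the same route as the paper: invoke Proposition~\ref{prop:LowICoefficientDifferenceRelation} to get the iterated difference equation, feed it into Corollary~\ref{cor:DegreeNPolyn} with $\la=4$ to obtain polynomiality of degree $n-1-i$ on the lattice $4\ZZ$ (which is all that is needed, since assumption~(2) places every $K\cdot\La$ there), and then read off the parity vanishing from \eqref{eq:SignChangeOfTildeBCoeff} exactly as you do. Two minor remarks: the appeal to real analyticity is superfluous, since Corollary~\ref{cor:DegreeNPolyn} already yields the polynomial expression on $4\ZZ$ and the statement only concerns those values; and your discussion of the ``obstacle'' is well taken---the paper absorbs this into ``continue the assumptions,'' but the point is that condition~\eqref{eq:DiffRelatCond3} depends on $x$ only through its parity, so once $y=\La^2$ is even (which, as you note, follows from either hypothesis) the difference equation is available at every point of $4\ZZ$, not merely at the finitely many $K\cdot\La$.
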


\begin{proof}
The Poincar\'e dual $h_\La$ has the property that $\langle K,h_\La\rangle=K\cdot\La$
for any $K\in H^2(X;\ZZ)$.
The assumption $\La\cdot K\equiv 0\pmod 4$ implies that
it is enough to compute $\tilde b_{i,j,k}(q,q-n-3,4x,\La^2,m)$ for $x\in\ZZ$.
Equation \eqref{eq:PolynDependenceCoeff}  then
follows from equation \eqref{eq:LowICoefficientDifferenceRelation} in
Proposition \ref{prop:LowICoefficientDifferenceRelation}
and Corollary \ref{cor:DegreeNPolyn}.
Because $\La\cdot K\equiv 0\pmod 4$,
equation \eqref{eq:SignChangeOfTildeBCoeff}
implies that
$$
\tilde b_{i,j,k}(q,q-n-3,-K\cdot \La,\La^2,m)
=
(-1)^{n+3+i}\tilde b_{i,j,k}(q,q-n-3,K\cdot \La,\La^2,m).
$$
Therefore, the coefficients $\tilde b_{u,i,j,k}$ in \eqref{eq:PolynDependenceCoeff}
with $u\not\equiv n+3+i\pmod 2$, or equivalently
$u\equiv n+i\pmod 2$ vanish as asserted in \eqref{eq:ParityOfVanishingbuCoeff}.
\end{proof}

\begin{rmk}
We can remove the assumption in Corollary \ref{cor:PolynomialDependence}
that there is a class $K_1\in B(X)$ with $K_1\cdot\La=0$ but then the coefficient
will be given as a polynomial in the variable $\langle K-K_1,h_\La\rangle$
which is less convenient for the computations in the proof of
Theorem \ref{thm:SCSTImpliesWC}.
\end{rmk}

\section{Proofs of main results}
\label{sec:MainProof}
We begin by establishing
the following  algebraic consequence of superconformal simple type;
this will allow us to  show that Witten's Conjecture \ref{conj:WC} holds even
without determining the coefficients
$\tilde b_{i,j,k}$ with $i<c(X)-3$.

\begin{lem}
\label{lem:SCSTVanishingSum}
Let $X$ be a standard four-manifold of superconformal simple type.
Assume that $0\notin B(X)$.
If $w\in H^2(X,\ZZ)$ is characteristic
and
$j,u\in\NN$ satisfy
$j+u< c(X)-3$ and $j+u\equiv c(X)\pmod 2$, then
\begin{equation}
\label{eq:SWPolyVanishingDeriv}
\sum_{K\in B'(X)} (-1)^{\eps(w,K)}\SW_X'(K)\langle K,h_1\rangle^j\langle K,h_2\rangle^u
=
0,
\end{equation}
for any $h_1,h_2\in H_2(X;\RR)$.
\end{lem}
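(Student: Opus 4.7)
My plan is to derive \eqref{eq:SWPolyVanishingDeriv} from the superconformal simple type identity \eqref{eq:SCST} in two steps: first polarize the single variable $h$ in \eqref{eq:SCST} into $t_1 h_1 + t_2 h_2$ to pass from a power of $\langle K,h\rangle$ to products $\langle K,h_1\rangle^j\langle K,h_2\rangle^u$; second, fold the resulting sum over $B(X)$ onto the fundamental domain $B'(X)$ via the involution $K\mapsto -K$, which is fixed-point free on $B(X)$ by the hypothesis $0\notin B(X)$.

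For the polarization step, I would substitute $h = t_1 h_1 + t_2 h_2$ into \eqref{eq:SCST}, expand $\langle K,h\rangle^i$ by the binomial theorem, and equate to zero the coefficient of each monomial $t_1^j t_2^u$. Letting $i=j+u$ range over $0,1,\dots,c(X)-4$ then yields
\begin{equation}
\label{eq:proof_plan_bilinear}
\sum_{K\in B(X)} (-1)^{\eps(w,K)}\, \SW_X'(K)\, \langle K,h_1\rangle^j \langle K,h_2\rangle^u = 0
\end{equation}
for every $j,u\in\NN$ with $j+u\le c(X)-4$, which is precisely the range $j+u<c(X)-3$ in the lemma.

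For the folding step, I would split $B(X)=B'(X)\sqcup(-B'(X))$ and compare the contributions of $K$ and $-K$ in \eqref{eq:proof_plan_bilinear}. The paper already supplies the three needed inputs: the identity $\SW_X'(-K)=(-1)^{\chi_h(X)}\SW_X'(K)$, which follows from \cite[Corollary 6.8.4]{MorganSWNotes} together with \eqref{eq:DefineCohomSW}; the identity $\eps(w,-K)=\eps(w,K)-w\cdot K$, immediate from \eqref{eq:DefineOrientationEps}; and $\langle -K,h_i\rangle=-\langle K,h_i\rangle$. Because both $w$ and $K$ are characteristic, $w\cdot K\equiv w^2\equiv \si(X)\pmod 2$, so the $-K$ contribution equals $(-1)^{\chi_h(X)+\si(X)+j+u}$ times the $K$ contribution.

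The decisive parity check, which I expect to be the only real subtlety, is that $\chi_h(X)+\si(X)\equiv c(X)\pmod 2$. This follows from \eqref{eq:CharNumbers}: since $c_1^2(X)=2e(X)+3\si(X)\equiv \si(X)\pmod 2$, one has $c(X)=\chi_h(X)-c_1^2(X)\equiv \chi_h(X)+\si(X)\pmod 2$. Under the hypothesis $j+u\equiv c(X)\pmod 2$ the exponent $\chi_h(X)+\si(X)+j+u$ is therefore even, so \eqref{eq:proof_plan_bilinear} reduces to twice the $B'(X)$ sum in \eqref{eq:SWPolyVanishingDeriv}, and dividing by $2$ gives the claim. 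Both the polarization step and the $B(X)\to B'(X)$ folding are otherwise routine; the assumption $0\notin B(X)$ is used solely to guarantee that no doubling occurs at a fixed point of the involution.
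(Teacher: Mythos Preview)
Your proposal is correct and follows essentially the same route as the paper: polarize the single variable in \eqref{eq:SCST} to obtain the vanishing of the bilinear sum over $B(X)$, then fold onto $B'(X)$ using the involution $K\mapsto -K$ and the conjugation identity $\SW_X'(-K)=(-1)^{\chi_h(X)}\SW_X'(K)$. The only cosmetic difference is in the parity bookkeeping: the paper computes $w\cdot K\equiv K^2\equiv c_1^2(X)\pmod 2$ via Seiberg--Witten simple type, whereas you use $w\cdot K\equiv w^2\equiv\sigma(X)\pmod 2$ via the fact that $K$ is characteristic, and then reduce $\chi_h(X)+\sigma(X)\equiv c(X)\pmod 2$; both reach the same conclusion.
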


\begin{proof}
Let $i=j+u$.  Because $i\le c(X)-4$ by hypothesis,
the function $\SW_X^{w,i}:H_2(X;\RR)\to \RR$ vanishes identically
by the defining property \eqref{eq:SCST} of superconformal simple type and thus
\begin{equation}
\label{eq:SWPolyDerivVanishing}
\left.\frac{\rd^i}{\rd s^j\rd t^u}
\SW_X^{w,i}(s h_1+th_2)\right|_{s=t=0}=0.
\end{equation}
Substituting the equality
\begin{align*}
\left.
\frac{\rd^i}{\rd s^j\rd t^u}
\langle K, s h_1 +t h_2\rangle^i\right|_{s=t=0}
{}&=
\left.
\frac{\rd^i}{\rd s^j\rd t^u}
\sum_{a+b=i} \binom{i}{a} s^at^b \langle K,h_1\rangle^a \langle K,h_2\rangle^b
\right|_{s=t=0}
\\
{}&=
i!\langle K,h_1\rangle^j \langle K,h_2\rangle^u
\end{align*}
into the equality \eqref{eq:SWPolyDerivVanishing} and using the expression
in \eqref{eq:SCST} for $\SW^{w,i}_X$ yields
\begin{align*}
0{}&=
\left.\frac{\rd^i}{\rd s^j\rd t^u}
\SW_X^{w,i}(s h_1+th_2)\right|_{s=t=0}
\\
{}&=
\sum_{K\in B(X)} (-1)^{\eps(w,K)}\SW_X'(K)
\left.
\frac{\rd^i}{\rd s^j\rd t^u}
\langle K, s h_1 +t h_2\rangle^i\right|_{s=t=0}
\\
{}&=
i!\sum_{K\in B(X)} (-1)^{\eps(w,K)}\SW_X'(K)
\langle K,h_1\rangle^j \langle K,h_2\rangle^u.
\end{align*}
This proves that
\begin{equation}
\label{eq:SWPolyVanishingDerivBSum}
0
=
\sum_{K\in B(X)} (-1)^{\eps(w,K)}\SW_X'(K)
\langle K,h_1\rangle^j \langle K,h_2\rangle^u.
\end{equation}
Because $\SW_X'(K)=(-1)^{\chi_h(X)}\SW_X'(-K)$ by \cite[Corollary 6.8.4]{MorganSWNotes},
the terms in \eqref{eq:SWPolyVanishingDerivBSum}
corresponding to $K$ and $-K$, namely
$$
(-1)^{\frac{1}{2} (w^2+w\cdot K)}\SW_X'(K)\langle K,h_1\rangle^j \langle K,h_2\rangle^u
$$
and
$$
(-1)^{\frac{1}{2} (w^2-w\cdot K)}\SW_X'(-K)\langle -K,h_1\rangle^j \langle -K,h_2\rangle^u
$$
differ by the sign
$$
(-1)^{\chi_h(X) +w\cdot K+j+u}.
$$
Because $w$ is characteristic
and because $X$ has Seiberg--Witten simple type, we have
$w\cdot K\equiv K^2\equiv c_1^2(X)\pmod 2$.  Hence,
\begin{align*}
\chi_h(X)+w\cdot K + j+u
&\equiv
\chi_h(X)+c_1^2(X)+j+u
  \\
  &\equiv
c(X)+j+u\pmod 2.
\end{align*}
Hence,
the assumptions that $j+u\equiv c(X)\pmod 2$ and $0\notin B(X)$
imply that the terms  in \eqref{eq:SWPolyVanishingDerivBSum}
corresponding to $K$ and $-K$ are  equal.
Because $0\notin B(X)$, $K\neq -K$ for all $K\in B(X)$ and so
by combining these terms, we can rewrite \eqref{eq:SWPolyVanishingDerivBSum} as
\begin{equation}
\label{eq:SWPolyVanishingDerivB'Sum}
0
=
2\sum_{K\in B'(X)} (-1)^{\eps(w,K)}\SW_X'(K)
\langle K,h_1\rangle^j \langle K,h_2\rangle^u,
\end{equation}
which yields the desired result.
\end{proof}

The following lemma allows us to apply Corollary \ref{cor:PolynomialDependence}.

\begin{lem}
\label{lem:PositiveOnComplement}
Let $X$ be a standard four-manifold with odd intersection form.  Then for any $K\in B(X)$, there
is a class $\La\in H^2(X;\ZZ)$ with $\La^2>0$ and $\La\cdot K=0$.
\end{lem}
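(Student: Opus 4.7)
The plan is to reduce the statement to an elementary signature calculation on the hyperplane $K^\perp\otimes\RR\subset H^2(X;\RR)$, and then pass from a real positive class to an integral one by density.

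First I would dispense with the trivial case where $K$ is torsion in $H^2(X;\ZZ)$: here $\La\cdot K=0$ for every $\La$, so any $\La\in H^2(X;\ZZ)$ with $\La^2>0$ suffices, and such a class exists because $b^+(X)\geq 3\geq 1$. Henceforth I would assume that the image of $K$ in $H^2(X;\ZZ)/\text{tors}$ is nonzero, so that $K^\perp_\ZZ:=\{v\in H^2(X;\ZZ):v\cdot K=0\}$ has rank $b_2(X)-1$ and $K^\perp_\RR:=K^\perp_\ZZ\otimes\RR=\{v\in H^2(X;\RR):v\cdot K=0\}$ is a hyperplane.

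Next I would split into cases according to the sign of $K^2$ and compute the signature of the restricted form $Q_X|_{K^\perp_\RR}$. If $K^2\neq 0$, then $K\notin K^\perp_\RR$ and $H^2(X;\RR)=K^\perp_\RR\oplus\RR K$ is an orthogonal decomposition, so the signature of $Q_X|_{K^\perp_\RR}$ is $(b^+-1,b^-)$ when $K^2>0$ and $(b^+,b^--1)$ when $K^2<0$; in either case the positive part of $K^\perp_\RR$ has dimension at least $b^+(X)-1\geq 2$. If $K^2=0$ then $K\in K^\perp_\RR$ and the radical of $Q_X|_{K^\perp_\RR}$ is exactly $\RR K$; the nondegenerate quotient $K^\perp_\RR/\RR K$ has signature $(b^+-1,b^--1)$, and a maximal positive subspace lifts to a positive subspace of $K^\perp_\RR$ since $w\cdot w=(w+cK)\cdot(w+cK)$ for any $w\in K^\perp_\RR$ and $c\in\RR$. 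Thus in all cases there is a real class $\La_\RR\in K^\perp_\RR$ with $\La_\RR^2>0$.

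Finally I would produce an integral representative by density: the set $\{v\in K^\perp_\RR:v^2>0\}$ is open and nonempty, and $K^\perp_\ZZ\otimes\QQ$ is dense in $K^\perp_\RR$, so there is a rational class in this cone; clearing denominators gives the required $\La\in H^2(X;\ZZ)$ with $\La\cdot K=0$ and $\La^2>0$. The argument only uses $b^+(X)\geq 2$, and no obstacle arises; the odd intersection form hypothesis is not needed for this lemma per se but matches the standing assumptions of the surrounding corollaries in Section \ref{sec:CoeffComp}. The only mildly delicate step is the $K^2=0$ case, where one must remember that isotropic directions do not affect squares, so the signature count lifts faithfully from the quotient.
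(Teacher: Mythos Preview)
Your argument is correct, but it follows a genuinely different route from the paper's. The paper exploits the hypothesis that $Q_X$ is odd directly: by the classification of odd indefinite unimodular forms it diagonalizes $H^2(X;\ZZ)/\text{tors}$ as $\bigoplus_i \ZZ e_i \oplus \bigoplus_j \ZZ f_j$ with $e_i^2=1$, $f_j^2=-1$, writes $K=\sum a_i e_i+\sum b_j f_j$, notes that each $a_i$ is odd (hence nonzero) because $K\in B(X)$ is characteristic, and then sets $\La=a_2 e_1-a_1 e_2$, which visibly satisfies $\La\cdot K=0$ and $\La^2=a_1^2+a_2^2>0$. So the paper's proof is explicit and constructive, but it genuinely uses both the oddness of $Q_X$ and the fact that $K$ is characteristic. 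Your signature-counting argument on $K^\perp_\RR$ followed by rational approximation is less explicit but strictly more general: as you observe, it needs only $b^+(X)\ge 2$ and works for an arbitrary class $K$, with no parity assumption on the form. Incidentally, your torsion case is vacuous here, since a torsion characteristic element would force $x\cdot x\equiv 0\pmod 2$ for all $x$, contradicting oddness; but including it does no harm and keeps your proof self-contained without the odd hypothesis.
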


\begin{proof}
Because $Q_X$ is odd and $b^+(X)\ge 3$, by \cite[Theorem 1.2.21]{GompfStipsicz} we can write
$$
H^2(X;\ZZ)
\cong\left( \oplus_{i=1}^m \ZZ e_i\right) \oplus \left(\oplus_{j=1}^n \ZZ e_j\right),
$$
where $m\ge 3$, and
$Q_X$ is diagonal with respect to the basis $\{e_1,\dots,e_m,f_1,\dots,f_n\}$, where
$e_i^2=1$, and $f_j^2=-1$.  Because $K\in B(X)$ is characteristic, we can
write
$$
K=\sum_{i=1}^m a_i e_i + \sum_{j=1}^n b_jf_j,
$$
where $a_i\equiv 1\pmod 2$.  Define $\La:=a_2e_1-a_1e_2$.  Then $\La\cdot K=0$ and
$\La^2=a_1^2+a_2^2>0$ as required.
\end{proof}

Corollary \ref{cor:PolynomialDependence} and Lemma \ref{lem:SCSTVanishingSum}
provide the basis of the proof of our main result:

\begin{proof}[Proof of Theorem \ref{thm:SCSTImpliesWC}]
By Theorem \ref{thm:WCBlowDownInvariance}, we may blow up
$X$ without loss of generality.
According to Lemma \ref{lem:SCSTBlowUp}, the superconformal simple type condition is preserved
under blow-up. If $\widetilde X$ is the blow-up of $X$, then
the characterization of $B(\widetilde X)$ in \eqref{eq:SWBasicsOfBlowUp} implies that
$0\notin B(\widetilde X)$.  Thus, by replacing $X$ with its blow-up if necessary,
we may assume without loss of generality that $c_1^2(X)\neq 0$, $Q_X$ is odd,
$c(X)\ge 5$, $0\notin B(X)$ and $\nu(K)=1$, where $\nu(K)$ is defined in \eqref{eq:DiracSpincFunction}
for each $K\in B(X)$.

By Proposition \ref{prop:IndepOfWCFromw}, it suffices
to prove that  equation \eqref{eq:DInvarForWCB'Sum}
in Lemma \ref{lem:ReduceDFormToB'Sum} holds when $w\in H^2(X;\ZZ)$ is
characteristic.
Because $w$ is characteristic,
\begin{align*}
w^2 {}&\equiv \si(X) \pmod 8 \quad\text{(by \cite[Lemma 1.2.20]{GompfStipsicz})}
\\
{}&= c_1^2(X)-8\chi_h(X)\quad\text{(by \eqref{eq:CharNumbers})}
\\
{}&\equiv c_1^2(X)\pmod 8.
\end{align*}
Thus, $D^w_X(h^{\delta-2m}x^m)=0$ unless
$$
  \delta \equiv -w^2-3\chi_h(X)\equiv \chi_h(X)-c_1^2(X)-4\chi_h(X)\equiv c(X)\pmod 4
$$
and we need only compute the Donaldson invariant
$D^w_X(h^{\delta-2m}x^m)$, where
\begin{equation}
\label{eq:deltaAndmAssumption}
\delta\ge 2m\quad \text{and}\quad \delta \equiv -w^2-3\chi_h(X)\equiv c(X)\pmod 4.
\end{equation}
To apply Lemma \ref{lem:BlownUpCobordism} to compute $D^w_X(h^{\delta-2m}x^m)$,
we abbreviate
\begin{equation}
\label{eq:CoefficientAbbreviation}
\tilde b_{i,j,k}(\La\cdot K)
=
\tilde b_{i,j,k}(\chi_h(X),c_1^2(X)-1,\La\cdot K,\La^2,m),
\end{equation}
and verify that we can find $\La\in H^2(X;\ZZ)$ satisfying the conditions of
Theorem \ref{thm:Cobordism} and hence those of Lemma \ref{lem:BlownUpCobordism}
as well as of Corollary \ref{cor:PolynomialDependence}.

By Lemma \ref{lem:PositiveOnComplement} and our observation that by replacing $X$
with its blow-up if necessary we can assume that
$Q_X$ is odd and there are classes
$K_0\in B(X)$ and $\La_0\in H^2(X;\ZZ)$ with
$\La_0^2>0$ and $\La_0\cdot K_0=0$.
Because any $K\in B(X)$ can be written as $K=K_0+2 L_K$ for $L_K\in H^2(X;\ZZ)$,
if $\La=2b\La_0$ where $b\in\NN$, then
\begin{equation}
\label{eq:CorPolDepAssumptions}
K_0\cdot\La=0\quad\text{and}\quad
K\cdot\La\equiv 0\pmod 4\ \text{for all $K\in B(X)$},
\end{equation}
so $\La$ satisfies two of the assumptions of Corollary \ref{cor:PolynomialDependence}.

If $w\in H^2(X;\ZZ)$ is characteristic and $\La=2b\La_0$, where
$b\in\NN$ and $\La_0^2>0$, then $\La-w\equiv w_2(X)\pmod 2$ and so
condition \eqref{eq:CobordismCondition1} holds.
Given $\delta$, by choosing $b$  sufficiently large,
we can ensure
\begin{equation}
\label{eq:La2Assumption}
\La^2+c(X)+4\chi_h(X)>\delta,
\end{equation}
so condition \eqref{eq:CobordismCondition2} holds.  Conditions
\eqref{eq:CobordismCondition3} and \eqref{eq:CobordismCondition4} in
Theorem \ref{thm:Cobordism}, that
$\delta\equiv -w^2-3\chi_h(X)\pmod 4$
and $\delta-2m\ge 0$, respectively,
follow from \eqref{eq:deltaAndmAssumption}.
Thus, Lemma \ref{lem:BlownUpCobordism} yields
\begin{equation}
\begin{aligned}
\label{eq:CompareCoeff3}
D^w_X(h^{\delta-2m}x^m)
&=
\sum_{\begin{subarray}{l}i+j+2k\\=\delta-2m\end{subarray}}
\sum_{K\in B'(X)}
(-1)^{{\eps(w,K)}}
\frac{2(i+1)SW'_X(K)}{\delta-2m+1}
\tilde b_{i+1,j,k}(K\cdot\La)
\\
{}&\qquad\times
\langle K,h\rangle^i
\langle \La,h\rangle^j
Q_X(h)^k.
\end{aligned}
\end{equation}
We now verify that we can apply Propositions \ref{prop:HighDegreeCoefficients} and \ref{prop:LowICoefficientDifferenceRelation}
and Corollary \ref{cor:PolynomialDependence}
to compute the coefficients $\tilde b_{i+1,j,k}$ in \eqref{eq:CompareCoeff3}.
The indices $i,j,k,m$ appearing in
\eqref{eq:CompareCoeff3} satisfy
\begin{equation}
\label{eq:IndexConstraint}
i+1+j+2k+2m=\delta+1.
\end{equation}
To match the notation of  Propositions \ref{prop:HighDegreeCoefficients} and \ref{prop:LowICoefficientDifferenceRelation},
we will write the first two arguments of the coefficients in \eqref{eq:CoefficientAbbreviation} as
\begin{equation}
\label{eq:Defineq}
q:=\chi_h(X),
\end{equation}
and $c_1^2(X)-1= q-3-n$, where
\begin{equation}
\label{eq:Definen}
n:= \chi_h(X)-c_1^2(X)-2=c(X)-2.
\end{equation}
The definitions \eqref{eq:Defineq} and  \eqref{eq:Definen},
the property that $b^+\ge 3$ for standard manifolds,
and our earlier observation that we can assume $c(X)\ge 5$
imply that
\begin{equation}
\label{eq:FirstTwoArgsAssump}
q\ge 2\quad\text{and}\quad n\ge 2,
\end{equation}
as required in
Propositions \ref{prop:HighDegreeCoefficients} and \ref{prop:LowICoefficientDifferenceRelation}.

We now verify the hypotheses of
Proposition \ref{prop:HighDegreeCoefficients} for the coefficients $\tilde b_{i+1,j,k}$ in \eqref{eq:CoefficientAbbreviation}
with $i\ge c(X)-3$.  The
condition \eqref{eq:HighDegreeCondition1} holds because $i+1\ge c(X)-2=n$ by \eqref{eq:Definen}.
In the notation of Proposition \ref{prop:HighDegreeCoefficients} for $\tilde b_{i+1,j,k}$,
we have $A=i+1+j+k+2m$ and so $A=\delta+1$ by \eqref{eq:IndexConstraint}.
The property
\eqref{eq:La2Assumption} of $\La^2$
and \eqref{eq:Defineq}
imply that
\begin{equation}
\label{eq:La2Bound1}
\La^2>\delta-c(X)-4q=\delta-n-2-4q=A-n-3-4q,
\end{equation}
so condition \eqref{eq:HighDegreeCondition2} holds.
The condition $A\ge 2m$ in \eqref{eq:HighDegreeCondition3} holds by \eqref{eq:deltaAndmAssumption}.
Our choice of $\La=2\La_0$ implies that $\La^2\equiv \La\cdot K\equiv 0\pmod 2$ for all
$K\in B(X)$, and thus condition
\eqref{eq:HighDegreeCondition4}
holds as well,
noting that $x = \Lambda^2$ and $y=\Lambda\cdot K$.
Hence, Proposition \ref{prop:HighDegreeCoefficients} and the equality $A=\delta+1$ imply that, for all $i\ge c(X)-3$,
\begin{equation}
\label{eq:HighDegreeCoeffInFinalSum}
\tilde b_{i+1,j,k}(\chi_h(X),c_1^2(X)-1,K\cdot\La,\La^2,m)
=
\begin{cases}
\displaystyle\frac{(\delta+1-2m)!}{k!(i+1)!}2^{m-k-c(X)+2}  & \text{if $j=0$,}
\\
0 & \text{if $j>0$.}
\end{cases}
\end{equation}
We now verify the hypotheses of Proposition \ref{prop:LowICoefficientDifferenceRelation}
and Corollary \ref{cor:PolynomialDependence} hold.
Observe that $i+1\le c(X)-3=n-1$ by \eqref{eq:Definen}, so condition \eqref{eq:DiffRelatCond0} holds.
The inequality in \eqref{eq:La2Bound1} implies that \eqref{eq:DiffRelatCond1} holds.
Because $A=\delta+1\equiv c(X)+1\equiv n+3\pmod 4$ by
\eqref{eq:deltaAndmAssumption} and \eqref{eq:Definen}, the fact that $\La^2=(2\La_0)^2\equiv 0\pmod 4$
implies that
$$
\La^2\equiv 0\equiv A-(n+3)\pmod 4,
$$
and thus condition \eqref{eq:DiffRelatCond2} holds.
We already showed that condition \eqref{eq:HighDegreeCondition4} holds and that implies
condition \eqref{eq:DiffRelatCond3} holds.  Therefore, Proposition
\ref{prop:LowICoefficientDifferenceRelation}
applies to the coefficients $\tilde b_{i+1,j,k}$ with $i\le c(X)-3$.
The hypotheses of Corollary  \ref{cor:PolynomialDependence}
are those of Proposition
\ref{prop:LowICoefficientDifferenceRelation} and
the conditions we have previously verified in \eqref{eq:CorPolDepAssumptions}.
Thus, Corollary  \ref{cor:PolynomialDependence}
implies that the coefficients $\tilde b_{i+1,j,k}$ with 
$i\le c(X)-4$ can be written as
\begin{equation}
\label{eq:LowDegCoeffExp}
\begin{aligned}
{}&\tilde b_{i+1,j,k}(\chi_h(X),c_1^2(X)-1,K\cdot\La,\La^2,m)
\\
{}&\quad=
\sum_{u=0}^{c(X)-4-i}
\tilde b_{u,i+1,j,k}(q,q-n-3,\La^2,m)\langle K,h_\La\rangle^u,
\end{aligned}
\end{equation}
where $h_\La=\PD[\La]\in H_2(X;\RR)$.

We now abbreviate,
$$
\tilde b_{u,i+1,j,k}:=\tilde b_{u,i+1,j,k}(q,q-n-3,\La^2,m),
$$
and split the sum on the right-hand-side of \eqref{eq:CompareCoeff3} into two parts:
\begin{equation}
\begin{aligned}
\label{eq:CompareCoeff3a}
D^w_X(h^{\delta-2m}x^m)
&=
\sum_{\begin{subarray}{l}i+j+2k\\=\delta-2m,\\ i\le c(X)-4\end{subarray}}
\sum_{K\in B'(X)}
(-1)^{{\eps(w,K)}}
\frac{2(i+1)SW'_X(K)}{\delta-2m+1}
\\
{}&\qquad\quad\times
\sum_{u=0}^{c(X)-4-i}
\tilde b_{u,i+1,j,k}
\langle K,h\rangle^i\langle K,h_\La\rangle^u
\langle \La,h\rangle^j
Q_X(h)^k
\\
{}&\quad +
\sum_{\begin{subarray}{l}i+j+2k\\=\delta-2m,\\ i\ge c(X)-3\end{subarray}}
\sum_{K\in B'(X)}
(-1)^{{\eps(w,K)}}
\frac{2(i+1)SW'_X(K)}{\delta-2m+1}
\\
{}&\qquad\quad\times
\tilde b_{i+1,j,k}(K\cdot\La)
\langle K,h\rangle^i
\langle K,h\rangle^j
Q_X(h)^k.
\end{aligned}
\end{equation}
Because the coefficients $\tilde b_{u,i+1,j,k}$ do not depend on $\La\cdot K$,
we can rewrite the first sum on the right-hand-side of \eqref{eq:CompareCoeff3a} as
\begin{equation}
\label{eq:CompareCoeff3b}
\begin{aligned}
{}&
\sum_{\begin{subarray}{l}i+j+2k\\=\delta-2m,\\ i\le c(X)-4\end{subarray}}
\frac{2(i+1)SW'_X(K)}{\delta-2m+1}
\langle \La,h\rangle^j
Q_X(h)^k
\\
{}&\qquad\times
\sum_{u=0}^{c(X)-4-i}
\tilde b_{u,i+1,j,k}
\sum_{K\in B'(X)}
(-1)^{{\eps(w,K)}}SW'_X(K)
\langle K,h\rangle^i\langle K,h_\La\rangle^u.
\end{aligned}
\end{equation}
By \eqref{eq:ParityOfVanishingbuCoeff} and the equality
$n\equiv c(X)$ from \eqref{eq:Definen},
\begin{equation}
\label{eq:VanishingCoeffByParity}
\tilde b_{u,i+1,j,k}=0 \quad\text{if $u\equiv c(X)+i+1\pmod 2$.}
\end{equation}
We now consider the terms in the sum \eqref{eq:CompareCoeff3b}
with $u\equiv n+i\pmod 2$.
For $u$ and $i$ satisfying $0\le u+i\le c(X)-4$, and $u\equiv n+i\pmod 2$, and $w\in H^2(X;\ZZ)$ characteristic,
Lemma \ref{lem:SCSTVanishingSum}
implies that
$$
\sum_{K\in B'(X)}
(-1)^{{\eps(w,K)}}SW'_X(K)
\langle K,h\rangle^i\langle K,h_\La\rangle^u
=0.
$$
Because $0\le u\le c(X)-4-i$ and thus $0\le u+i\le c(X)-4$
for all terms in the sum \eqref{eq:CompareCoeff3b},
the preceding equality and \eqref{eq:VanishingCoeffByParity} imply that
the sum  \eqref{eq:CompareCoeff3b} vanishes.

Hence, the terms in the sum  \eqref{eq:CompareCoeff3a} with $i\le c(X)-4$ vanish.
By employing that fact and the formula \eqref{eq:HighDegreeCoeffInFinalSum} for the coefficients $\tilde b_{i+1,j,k}$, we can rewrite \eqref{eq:CompareCoeff3a} as
\begin{align*}
{}&D^w_X(h^{\delta-2m}x^m)
\\
{}&\quad=
\sum_{\begin{subarray}{l}i+2k\\=\delta-2m,\\ i\ge c(X)-3\end{subarray}}
\sum_{K\in B'(X)}
(-1)^{{\eps(w,K)}}
\frac{2(i+1)SW'_X(K)}{\delta-2m+1}
\\
{}&\qquad\times
\frac{(\delta-2m+1)!}{k!(i+1)!2^{k+c(X)-2-m}}
\langle K,h\rangle^i
Q_X(h)^k
\\
{}&\quad=
\sum_{\begin{subarray}{l}i+2k\\=\delta-2m,\\ i\ge c(X)-3\end{subarray}}
\sum_{K\in B'(X)}
(-1)^{{\eps(w,K)}}
SW'_X(K)
\frac{(\delta-2m)!}{k!i!2^{k+c(X)-3-m}}
\langle K,h\rangle^i
Q_X(h)^k.
\end{align*}
Comparing this equality with \eqref{eq:DInvarForWCB'Sum}
in Lemma \ref{lem:ReduceDFormToB'Sum} and observing that
the terms in \eqref{eq:DInvarForWCB'Sum} with $i\le c(X)-4$ also vanish by
the superconformal simple type property
shows that Witten's Conjecture \ref{conj:WC} holds.
\end{proof}

\begin{rmk}
\label{rmk:CoeffAmbiguity}
The proof of Theorem \ref{thm:SCSTImpliesWC} also illustrates the limits of the method of
applying Lemma \ref{lem:AlgCoeff} to examples of four-manifolds
satisfying Witten's Conjecture \ref{conj:WC} to determine the coefficients $\tilde b_{i,j,k}$.

We can see that if $X$ has superconformal simple type,
then  by Lemma \ref{lem:SCSTVanishingSum}, changing the coefficients $\tilde b_{u,i,j,k}$ in
\eqref{eq:CompareCoeff3a} would not change the expression for the Donaldson invariant
given by the cobordism formula because the expression in \eqref{eq:CompareCoeff3b}
would still vanish.
Thus, applying Lemma \ref{lem:AlgCoeff} to an equality of the
form \eqref{eq:CompareCoeff3a}, on a manifold of superconformal
simple type, does not determine the coefficients $\tilde b_{i,j,k}$.

Because  all standard four-manifolds have
superconformal simple type by \cite{FL8}, this
makes it unlikely that one could extract
more information about the coefficients $\tilde b_{i,j,k}$ by applying this
method to other four-manifolds satisfying Witten's Conjecture \ref{conj:WC}.
\end{rmk}

\begin{proof}[Proof of Corollary \ref{cor:NonZeroIntImpliesWC}]
The result follows immediately from Theorem \ref{thm:SCSTImpliesWC}
and the result in \cite{FL8} that standard four-manifolds
satisfying the hypotheses of Corollary \ref{cor:NonZeroIntImpliesWC} have
superconformal simple type.
\end{proof}

\bibliography{/Users/pfeehan/Dropbox/LATEX/Bibinputs/master}
\bibliographystyle{amsplain-nodash}

\end{document}